\numberwithin{equation}{section}
\DeclareMathOperator{\tr}{Tr}
\DeclareMathOperator{\ts}{tr}
\DeclareMathOperator{\cov}{Cov}
\DeclareMathOperator{\supp}{Supp}
\DeclareMathOperator{\dep}{depth}
\DeclareMathOperator{\id}{id}
\let\limsup\relax
\DeclareMathOperator*{\limsup}{limsup}
\newcommand{\norm}[1]{\left\Vert #1\right\Vert}
\begin{document}

\newtheorem{theorem}{Theorem} [section]
\newtheorem{prop}[theorem]{Proposition} 
\newtheorem{defi}[theorem]{Definition} 
\newtheorem{exe}[theorem]{Example} 
\newtheorem{lemma}[theorem]{Lemma} 
\newtheorem{rem}[theorem]{Remark} 
\newtheorem{cor}[theorem]{Corollary} 
\newtheorem{conj}[theorem]{Conjecture}
\renewcommand\P{\mathbb{P}}
\newcommand\E{\mathbb{E}}
\newcommand\N{\mathbb{N}}
\newcommand\1{\mathbf{1}}
\newcommand\C{\mathbb{C}}
\newcommand\CC{\mathcal{C}}
\newcommand\M{\mathbb{M}}
\newcommand\R{\mathbb{R}}
\newcommand\U{\mathbb{U}}
\newcommand\A{\mathcal{A}}
\newcommand\B{\mathcal{B}}
\newcommand\F{\mathcal{F}}
\renewcommand\i{\mathbf{i}}
\renewcommand\S{\mathcal{S}}
\renewcommand\d{\partial_i}
\newcommand\PP{\mathcal{P}}

\def\etc{,\dots ,}

\title{Asymptotic expansion of smooth functions in polynomials in deterministic matrices and iid GUE matrices}

\date{}

\author[1,2]{F\'elix Parraud}
\affil[1]{\small Universit\'e de Lyon, ENSL, UMPA, 46 all\'ee d'Italie, 69007 Lyon.}
\affil[2]{\small Department of Mathematics, Graduate School of Science, Kyoto University, Kyoto 606-8502, Japan.}

\maketitle

\noindent E-mail of the corresponding author: \href{mailto:felix.parraud@ens-lyon.fr}{felix.parraud@ens-lyon.fr}

\noindent Data sharing not applicable to this article as no datasets were generated or analysed during the current study.

\noindent The authors have no relevant financial or non-financial interests to disclose. \\

\begin{abstract}
		
	Let $X^N$ be a family of $N\times N$ independent GUE random matrices, $Z^N$ a family of deterministic matrices, $P$ a self-adjoint noncommutative polynomial, that is for any $N$, $P(X^N,Z^N)$ is self-adjoint, $f$ a smooth function. We prove that for any $k$, if $f$ is smooth enough, there exist deterministic constants $\alpha_i^P(f,Z^N)$ such that
	$$ \mathbb{E}\left[\frac{1}{N}\text{Tr}\left( f(P(X^N,Z^N)) \right)\right]\ =\ \sum_{i=0}^k \frac{\alpha_i^P(f,Z^N)}{N^{2i}}\ +\ \mathcal{O}(N^{-2k-2}) .$$
	Besides, the constants $\alpha_i^P(f,Z^N)$ are built explicitly with the help of free probability. In particular, if $x$ is a free semicircular system, then when the support of $f$ and the spectrum of $P(x,Z^N)$ are disjoint, $\alpha_i^P(f,Z^N)=0$ for all $i\in\N$. As a corollary, we prove that given $\alpha<1/2$, for $N$ large enough, every eigenvalue of $P(X^N,Z^N)$ is $N^{-\alpha}$-close to the spectrum of $P(x,Z^N)$. 
	
\end{abstract}

\section{Introduction}

Asymptotic expansions in Random Matrix Theory created  connections between  different worlds, including  topology, statistical mechanics, and quantum field theory. In mathematics, a breakthrough  was made in 1986 in \cite{harerzag} by Harer and Zagier who used the large dimension expansion of the moments of Gaussian matrices to compute the Euler characteristic of the moduli space of curves. A good introduction to this topic is  given in the survey \cite{zvonski} by  Zvonkin.  In physics,  the seminal works of t'Hooft \cite{T_ouf} and Br\'ezin, Parisi, Itzykson and Zuber \cite{parisi} related matrix models with the enumeration of maps of any genus, hence providing a purely analytical tool to solve these hard combinatorial problems.  Considering matrices in interaction via a potential, the so-called matrix models, indeed allows us to consider the enumeration of maps with several vertices, including a possible coloring of the edges when the matrix model contains several matrices. This relation allowed to associate matrix models to statistical models on random graphs \cite{macl, cherbin, segala1, segala2,segala3}, as well as in \cite{segalaU1} and \cite{segalaU2} for the unitary case. This was also extended to the so-called $\beta$-ensembles in \cite{cherbin2,betaens1, borot1,borot2,borot4,borot5}. Among other objects, these works study correlation functions and the so-called free energy and show that they expand as power series in the inverse of the dimension, and the coefficients of these expansions enumerate maps sorted by their genus. To compute asymptotic expansions, often referred to in the literature as topological expansions, one of the most successful methods is the loop equations method, see \cite{debut} and \cite{debut2}. Depending on the model of random matrix, those are Tutte's equations, Schwinger-Dyson equations, Ward identities, Virasoro constraints, W-algebra or simply integration by parts. This method was refined and used repeatedly in physics, see for example the work of Eynard and his collaborators, \cite{eynard1,eynard2,betaens2,borot3}.  At first those equations were only solved for the first few orders, however in 2004, in \cite{eynard2} and later \cite{further} and \cite{further2}, this method was refined to push the expansion to any order recursively \cite{anci}. 

In this paper we want to generalize Harer-Zagier expansion for the moments of Gaussian matrices to more general smooth functions. Instead of a single GUE matrix, we will consider several independent matrices and deterministic matrices. We repeatedly use Schwinger-Dyson equations associated to GUE matrices to carry out our estimates. While we do not use the link between the coefficients of our expansion and map enumeration, as a corollary we get a new expression of these combinatorial objects. We show that the number of colored maps of genus $g$ with a single specific vertex can be expressed as an integral, see remark \ref{3map} for a precise statement.

Most papers quoted above have in common that they deal with polynomials or exponentials of polynomial evaluated in random matrices. With a few exceptions, such as \cite{macl} and \cite{precurso}, smooth functions have not been considered. However, being able to work with such functions is important for the applications. In particular we need to be able to work with functions with compact support to prove strong convergence results, that is proving the convergence of the spectrum for the Hausdorff distance. In this paper we establish a finite expansion of any order around the dimension of the random matrix for the trace of smooth functions evaluated in polynomials in independent GUE random matrices. We refer to Definition \ref{3GUEdef} for a definition of those objects. The link between maps and topological expansion is a good motivation to prove such kind of theorem. Another motivation is to study the spectrum of polynomials in these random matrices: because we consider general smooth functions, our expansion will for instance allow to study the spectrum outside of the limiting bulk. In the case of a single GUE matrix, we have an explicit formula for the distribution of the eigenvalues of those random matrices, see Theorem 2.5.2 of \cite{alice}. However, if we consider polynomials in independent GUE matrices, we have no such result. The first result in this direction dates back to 1991 when Voiculescu proved in \cite{Vo91} that the renormalized trace of such polynomials converges towards a deterministic limit $\alpha(P)$. In particular given $X_1^N,\dots,X_d^N$ independent GUE matrices, the following holds true almost surely:
\begin{equation}\label{3dv} \lim_{N\to \infty} \frac{1}{N}\tr_N\left( P(X_1^N,\dots,X_d^N) \right) = \alpha(P).\end{equation}

\noindent Voiculescu computed the limit $\alpha(P)$ with the help of free probability. Besides, if $A_N$ is a self-adjoint matrix of size $N$, then one can define the empirical measure of its (real) eigenvalues by 
$$ \mu_{A_N} = \frac{1}{N} \sum_{ i=1}^N \delta_{\lambda_i} \,$$

\noindent where $\delta_{\lambda}$ is the Dirac mass in $\lambda$ and $\lambda_1\etc \lambda_N$ are the eigenvalue of $A_N$. In particular, if $P$ is a self-adjoint polynomial, that is such that for any self adjoint matrices $A_1\etc A_d$, $P(A_1\etc A_d)$ is a self-adjoint matrix, then one can define the random measure $\mu_{P(X_1^N,\dots,X_d^N)}$. In this case, Voiculescu's result \eqref{3dv} implies that there exists a measure $\mu_P$ with compact support such that almost surely $\mu_{P(X_1^N,\dots,X_d^N)}$ converges weakly towards $\mu_P$ : it is given by $\mu_P(x^k)=\alpha(P^k)$ for all integer numbers $k$. Consequently, assuming we can apply the Portmanteau theorem, the proportion of eigenvalues of $A_N = P(X_1^N,\dots,X_d^N)$ in the interval $[a,b]$, that is $\mu_{A_N}([a,b])$, converges towards $\mu_P([a,b])$.

Therefore in order to study the eigenvalues of a random matrix one has to study the renormalized trace of its moments. However, if instead of studying the renormalized trace of polynomials in $A_N$, we study the non-renormalized trace of smooth function in $A_N$, then we can get precise information on the location of the eigenvalues. It all comes from the following remark, let $f$ be a non-negative function such that $f$ is equal to $1$ on the interval $[a,b]$, then if $\sigma(A_N)$ is the spectrum of $A_N$,
$$ \P\Big( \sigma(A_N)\cap [a,b] \neq \emptyset \Big) \leq \P\Big( \tr_{N}\left( f(A_N) \right)\geq 1 \Big) \leq \E\Big[ \tr_{N}\left( f(A_N) \right) \Big] .$$

\noindent Thus, if one can show that the right-hand side of this inequality converges towards zero when $N$ goes to infinity, then asymptotically there is no eigenvalue in the segment $[a,b]$. In the case of the random matrices that we study in this paper, that is polynomials in independent GUE matrices, a breakthrough was made in 2005 by Haagerup and Thorbj\o rnsen in \cite{HT}. They proved the almost sure convergence of the norm of those matrices. More precisely, they proved that for $P$ a self-adjoint polynomial, almost surely, for any $\varepsilon>0$ and $N$ large enough, 
\begin{equation}
\label{3spec}
\sigma\left( P(X_1^N,\dots,X_d^N) \right) \subset \supp \mu_P + (-\varepsilon,\varepsilon) \,
\end{equation}

\noindent where $\supp \mu_P$ is the support of the measure $\mu_P$. In order to do so, they showed that given a smooth function $f$, there is a constant $\alpha_0^P(f)$, which can be computed explicitly with the help of free probability, such that
$$ \E\Big[ \frac{1}{N} \tr_{N}\left( f(A_N) \right) \Big] = \alpha_0^P(f) + \mathcal{O}(N^{-2}) .$$

\noindent A similar equality was proved in \cite{un} with a better estimation of the dependency in the parameters such as $f$ and $Z^N$ in the $\mathcal{O}(N^{-2})$. Given the important consequences that studying the first two orders had, one can wonder what happens at the next order. More precisely, could we write this expectation as a finite order Taylor expansion, and what consequences would it have on the eigenvalues? That is, can we prove that for any $k$, if $f$ is smooth enough, there exist deterministic constants $\alpha_i^P(f)$ such that
$$ \mathbb{E}\left[\frac{1}{N}\tr_N\left( f(P(X_1^N,\dots,X_d^N)) \right)\right]\ =\ \sum_{i=0}^k \frac{\alpha_i^P(f)}{N^{2i}}\ +\ \mathcal{O}(N^{-2k-2}) ?$$

\noindent In 2002, by using Riemann-Hilbert techniques, Ercolani and McLaughlin gave in \cite{macl} a positive answer for the case of a single random matrix (but not necessarily a GUE random matrix), that is $d=1$. Haagerup and Thorbj\o rnsen gave a simplified proof in 2010 (see \cite{precurso}) for the specific case of a single GUE matrix. However, the method of the proof relied heavily on the explicit formula of the law of the eigenvalues of a GUE matrix and since there is no equivalent for polynomials in GUE matrices we cannot adapt this proof. Instead, we developed a proof whose main tool is free probability. The main idea of the proof is to interpolate independent GUE matrices and free semicirculars with free Ornstein-Uhlenbeck processes. However, thanks to a computation trick we only need to work with the marginals of this process at a given time $t$, and since it is well-known that the law of this random variable can be viewed as a clever interpolation between the process at time $0$ and its limit, we do not need to introduce notions of free stochastic calculus. For more details we refer to \cite[subsection 3.1]{un}. This means in particular that we do not need to define any notion of free stochastic calculus. The main result of this paper is the following theorem.

\begin{theorem}
	\label{3lessopti}
	We define,
	\begin{itemize}
		\item $X^N = (X_1^N,\dots,X_d^N)$ independent $GUE$ matrices of size $N$,
		\item $Z^N = (Z_1^N,\dots,Z_r^N, {Z_1^N}^*,\dots,{Z_r^N}^*)$ deterministic matrices whose norm is uniformly bounded over $N$,
		\item $P$ a self-adjoint polynomial which can be written as a linear combination of $\mathbf{m}$ monomials of degree at most $n$ and coefficients with an absolute value of at most $c_{\max}$,
		\item $f:\R\mapsto\R$ a function of class $\mathcal{C}^{4(k+1)+2}$.
	\end{itemize}
	
	\noindent We set $\norm{f}_{\infty}$ the supremum over $\R$ of $f$, and 
	$$ \norm{f}_{\CC^i} = \sum_{l=0}^i \norm{f^{(l)}}_{\infty}. $$
	Then there exist deterministic coefficients $(\alpha_i^P(f,Z^N))_{1\leq i\leq k}$ and constants $C,K$ and $c$ independent of $P,f,N$ and $k$, such that with $K_N = \max \{ \norm{Z^N_1}, \dots, \norm{Z^N_q}, K\}$, $C_{\max}(P) = \max \{1, c_{\max}\}$, for any $N$, if $k\leq cN n^{-1}$,
	\begin{align}
	\label{3mainresu0}
	&\left| \E\left[ \frac{1}{N}\tr_N\Big(f(P(X^N,Z^N))\Big)\right] - \sum_{0\leq i\leq k} \frac{1}{N^{2i}} \alpha_i^P(f,Z^N) \right| \\
	&\leq \frac{1}{N^{2(k+1)}} \norm{f}_{\mathcal{C}^{4(k+1) +2}} \times \Big(C\times n^2 K_N^{n} C_{\max} \mathbf{m} \Big)^{4(k+1)+1}\times k^{12k} . \nonumber
	\end{align}
	
	\noindent Besides, if we define $\widehat{K}_N$ like $K_N$ but with $2$ instead of $K$, then we have that for any $i$,
	\begin{equation}
	\label{3mainresu02}
	\left| \alpha_i^P(f,Z^N) \right| \leq \norm{f}_{\mathcal{C}^{4i+2}} \times \Big(C\times n^2 \widehat{K}_N^{n} C_{\max} \mathbf{m} \Big)^{4i+1}\times i^{12i} .
	\end{equation}
	Finally if $f$ and $g$ are functions of class $\mathcal{C}^{4(k+1)+2}$ equal on a neighborhood of the spectrum of $P(x,Z^N)$, where $x$ is a free semicircular system free from $\M_N(\C)$, then for any $i\leq k$, $\alpha_i^P(f,Z^N) = \alpha_i^P(g,Z^N)$. In particular if the support of $f$ and the spectrum of $P(x,Z^N)$ are disjoint, then for any $i$, $\alpha_i^P(f,Z^N)=0$.
	
\end{theorem}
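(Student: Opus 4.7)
The plan is to extend the free Ornstein-Uhlenbeck interpolation scheme used in \cite{un} for the case $k=0$. One fixes a free semicircular system $x$, free from $\M_N(\C)$, and considers the marginal-at-time-$t$ interpolation $X^N_t = e^{-t/2} X^N + \sqrt{1-e^{-t}}\, x$, which equals $X^N$ in distribution at $t=0$ and converges in distribution to $x$ as $t\to\infty$. Setting
\[
g(t) = \E\left[\frac{1}{N}\tr_N\big(f(P(X^N_t, Z^N))\big)\right],
\]
one differentiates in $t$ and applies Gaussian integration by parts (the matrix Schwinger-Dyson identity) to rewrite $g'(t)$. Using an integral representation of $f$ (either Fourier or Helffer-Sj\"ostrand) to separate the analytic from the algebraic content, the derivative becomes a $1/N^2$ multiple of an analogous expression, but with $f$ replaced by one of its derivatives and $P$ augmented by a partial-trace structure coming from the non-commutative derivative $\partial_i$ acting on the monomials of $P$.

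Integrating $g'(t)$ from $0$ to $\infty$ yields the first-order correction; iterating the same identity $k+1$ times produces
\begin{equation*}
g(0) = \sum_{i=0}^k \frac{\alpha_i^P(f, Z^N)}{N^{2i}} + \frac{R_k(f,Z^N)}{N^{2(k+1)}},
\end{equation*}
where each $\alpha_i^P(f, Z^N)$ is the free-probabilistic trace of an explicit expression built from $f^{(j)}$ (for $j\leq 4i+2$) and polynomials in $(x, Z^N)$, and $R_k$ is an iterated integral in the interpolation parameters involving $f$ up to order $4(k+1)+2$. This structural part of the argument is a direct iteration of the scheme of \cite{un}.

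\textbf{Quantitative bounds and main obstacle.} The bounds \eqref{3mainresu0} and \eqref{3mainresu02} require tracking, through the iteration, the precise dependence on the data $P, n, \mathbf{m}, K_N$. Each Schwinger-Dyson step applied to a polynomial of degree $n$ involves the non-commutative derivative $\partial_i$ acting at each of the $\leq n\mathbf{m}$ monomial positions, which multiplies the number of monomials by $O(n\mathbf{m})$, roughly doubles the total degree, and picks up a factor of order $K_N^n$ from the deterministic matrices. After $4(k+1)$ iterations (needed to reach the $1/N^{2(k+1)}$ correction) this yields the factor $(C n^2 K_N^n C_{\max}\mathbf{m})^{4(k+1)+1}$, together with a further combinatorial factor of order $k^{12k}$ coming from the nested index sums and time integrations. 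Checking that this explosion stays compatible with the assumption $k\leq cNn^{-1}$, and that the chosen integral representation of $f$ produces a finite remainder under the sole assumption $f\in\mathcal{C}^{4(k+1)+2}$, is the principal technical difficulty of the proof.

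\textbf{Locality on the spectrum.} Each $\alpha_i^P(f, Z^N)$ is exhibited, by construction, as a finite sum of terms of the form $\tau\big(f^{(j)}(P(x,Z^N))\cdot Q_{i,j}(x,Z^N)\big)$ with $j\leq 4i+2$ and $Q_{i,j}$ a fixed polynomial depending only on $P$, $i$ and $j$. By Borel functional calculus each such term depends only on the values of $f$ and its first $4i+2$ derivatives on $\sigma(P(x,Z^N))$. Hence if $f=g$ on a neighborhood of $\sigma(P(x,Z^N))$ then $\alpha_i^P(f,Z^N)=\alpha_i^P(g,Z^N)$ for every $i\leq k$, and if $\supp f\cap\sigma(P(x,Z^N))=\emptyset$ then every $f^{(j)}$ vanishes on the spectrum and all the $\alpha_i^P(f,Z^N)$ are zero.
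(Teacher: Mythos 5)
Your overall interpolation strategy (free Ornstein--Uhlenbeck marginals, Gaussian integration by parts, Fourier representation of $f$) matches the paper's, but there are two genuine gaps. First, the recursion is not a ``direct iteration of the scheme of \cite{un}''. When you differentiate in $t$ and integrate by parts, the defect you must control is not an expression of the same shape with $f$ replaced by one of its derivatives: it is the quantity $\tau_N\otimes\tau_N(\partial_iD_iQ)-\frac1N\sum_{u,v}\tau_N(E_{u,v}\,\partial_iD_iQ\#E_{v,u})$, a covariance-type discrepancy mixing deterministic matrices, GUE matrices and free semicirculars. Turning this into $N^{-2}$ times an expectation of a trace of a \emph{new} functional, so that the same interpolation can be applied again, is the heart of the paper: it requires the exact Poincar\'e-type covariance identity (Proposition \ref{3concentration}), the representation of the mixed family as a $k\to\infty$ limit of $kN\times kN$ GUE matrices together with the unitary-invariance reduction and Lemma \ref{3coeffnndiag}, and the introduction at every step of fresh free copies of the semicircular families, whose bookkeeping is exactly the role of the sets $J_n$ and of Lemma \ref{3imp2}. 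Your proposal does not supply this step (and, incidentally, the count is $k+1$ iterations each costing four noncommutative differentials, not $4(k+1)$ iterations), so the recursion defining the $\alpha_i$ and the remainder is not actually constructed.

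Second, the locality argument is unjustified. You assert that by construction each $\alpha_i^P(f,Z^N)$ is a finite sum of terms $\tau\big(f^{(j)}(P(x,Z^N))\,Q_{i,j}(x,Z^N)\big)$ and conclude by functional calculus. The construction does not yield this form: as formula \eqref{exprdescoeff} and Example \ref{3exe33} show, the coefficients are iterated integrals of traces of \emph{products} of exponentials $e^{\i\alpha yP}$ evaluated at several distinct interpolated semicircular families $x^{T_i}$ (built from the fresh free copies), interleaved with polynomial factors; they are not functions of the single operator $P(x,Z^N)$, so spectral locality is not immediate. The paper proves the statement indirectly: replacing $X^N$ by $X^{lN}$ and $Z^N$ by $Z^N\otimes I_l$ leaves every coefficient unchanged because $(x^{T_i},Z^N\otimes I_l)$ and $(x^{T_i},Z^N)$ have the same distribution, so a nonzero coefficient would force a polynomial decay in $l$ of $\E\big[\tau_N\big(f(P(X^{lN},Z^N\otimes I_l))\big)\big]$, whereas strong convergence (Male's theorem) together with concentration shows this quantity decays exponentially in $l$ when $\supp f$ avoids $\sigma(P(x,Z^N))$ --- a contradiction. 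To close this gap you would need either to prove your claimed representation of the coefficients or to reproduce an amplification argument of this kind.
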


This theorem is a consequence of the slightly sharper, but less explicit, Theorem \ref{3TTheo}. It is essentially the same statement, but instead of having the $\mathcal{C}^k$-norm of $f$, we work with the moments of the Fourier transform of $f$. We also give an explicit expression for the coefficients $\alpha_i^P$. The above theorem calls for a few remarks.

\begin{itemize}
	\item  In Theorem \ref{3lessopti}, we only considered a single function $f$ evaluated in a self-adjoint polynomial $P$. However, one could easily adapt the proof of Theorem \ref{3TTheo} to consider a product of functions $f_i$ evaluated in self-adjoint polynomials $P_i$ and get a similar result. The main difference would be that instead of $\norm{f}_{\mathcal{C}^{4(k+1) +2}}$ one would have $ \max_i \norm{f_i}_{\mathcal{C}^{4(k+1) +2}}$. One could also adapt the proof to deal with the case of a product of traces. We give more details about those two situations in Remark \ref{produf}.
	\item The coefficients $(\alpha_i^P(f,Z^N))_{1\leq i\leq k}$ are continuous with respect to all of their parameters, $f,Z^N$ and $P$. We give a precise statement in Corollary \ref{continucoeff}. In particular if $Z^N$ converges in distribution when $N$ goes to infinity (as defined in Definition \ref{3freeprob}) towards a family of noncommutative random variables $Z$, then for every $i$, $\alpha_i^P(f,Z^N)$ converges towards $\alpha_i^P(f,Z)$.
	\item We assumed that the matrices $Z^{N}$ are deterministic, but thanks to Fubini's theorem we can assume that they are random matrices as long as they are independent from $X^N$. In this situation though, $K_N^n$ in the right side of the inequality is a random variable (and thus we need some additional assumptions if we want its expectation to be finite for instance).
	\item We assumed that the matrices $Z^N$ were uniformly bounded over $N$. This is a technical assumption which is necessary to make sure that the coefficients $\alpha_i^P$ are well-defined. However, as we can see in Theorem \ref{3TTheo}, one can relax this assumption. That being said, in order for Equation \eqref{3mainresu0} to be meaningful one has to be careful that the term ${K_N^n}^4$ does not overwhelm the term $N^{-2}$.
	\item The exponent $12$ in the term $k^{12k}$ is very suboptimal and could easily be optimized a bit more in the proof of Theorem \ref{3lessopti}. For a better bound we refer to Theorem \ref{3TTheo}, where the term $k^{12k}$ is replaced by $k^{3k}$. However, in order to work with the $\mathcal{C}^k$-norm instead of the moments of the Fourier transform, we were forced to increase this term.
	\item Although we cannot take $k=\infty$, and hence we only get a finite Taylor expansion, we can still take $k$ which depends on $N$. However, to keep the last term under control we need to estimate the $k$-th derivative of $f$.
	\item Since the probability that there is an eigenvalue of $P(X^N,Z^N)$ outside of a neighborhood of $P(x,Z^N)$ is exponentially small as $N$ goes to infinity, the smoothness assumption on $f$ only needs to be verified on a neighborhood of $P(X^N,Z^N)$ for such an asymptotic expansion to exist.
\end{itemize}

As we said earlier in the introduction, by studying the trace of a smooth function evaluated in $P(X_1^N,\dots,X_d^N)$, Haagerup and Thorbj\o rnsen were able to show in \cite{HT} that the spectrum of $P(X_1^N,\dots,$ $ X_d^N)$ converges for the Hausdorff distance towards an explicit subset of $\R$. We summarized this result in Equation \eqref{3spec}. With the full finite order Taylor expansion, by taking $f: x\to g(N^{\alpha}x)$ where $g$ is a well-chosen smooth function, one can show the following proposition.

\begin{cor}
	\label{3voisinage}
	Let $X^N$ be independent $GUE$ matrices of size $N$, $A^N=(A_1^N,\dots,A_r^N, {A_1^N}^*,\dots,{A_r^N}^*)$ a family of deterministic matrices whose norm is uniformly bounded over $N$, $x$ be a free semicircular system and $P$ a self-adjoint polynomial. Given $\alpha< 1/2$, almost surely for $N$ large enough, 
	$$ \sigma\left( P(X^N,A^N) \right) \subset \sigma\left( P(x,A^N) \right) + (-N^{-\alpha},N^{-\alpha}) ,$$
	where $\sigma(X)$ is the spectrum of $X$, and $x$ is free from $\M_N(\C)$.
\end{cor}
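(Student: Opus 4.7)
The plan is to apply Theorem \ref{3lessopti} to a carefully chosen sequence of smooth cut-offs and then conclude by Borel-Cantelli. Write $S_N = \sigma(P(x,A^N))$, a compact subset of $\R$ depending on $N$. For each $N$ I build a smooth function $f_N\colon \R \to [0,1]$ vanishing on a neighborhood of $S_N$ while still dominating the indicator of $\{t : d(t,S_N) \geq N^{-\alpha}\}$. Concretely, fix a non-negative bump $\varphi \in \CC^\infty_c(\R)$ with $\int \varphi = 1$ and support in $[-1,1]$; set $\epsilon_N = N^{-\alpha}/4$ and $V_N = \{t : d(t,S_N) \geq 3N^{-\alpha}/4\}$, and put $f_N = \1_{V_N} * \varphi_{\epsilon_N}$ with $\varphi_\epsilon(x) = \epsilon^{-1}\varphi(x/\epsilon)$. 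A direct verification shows $f_N \equiv 1$ outside the $N^{-\alpha}$-neighborhood of $S_N$, $f_N \equiv 0$ inside its $N^{-\alpha}/2$-neighborhood, and the standard convolution bound $\norm{f_N}_{\CC^m} \leq C_m N^{m\alpha}$ for a constant $C_m$ depending only on $\varphi$ and $m$.

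Since $\supp(f_N) \cap S_N = \emptyset$, the last assertion of Theorem \ref{3lessopti} gives $\alpha_i^P(f_N, A^N) = 0$ for every $i \leq k$. Let $\mathcal{E}_N$ denote the event $\{\sigma(P(X^N, A^N)) \not\subset S_N + (-N^{-\alpha}, N^{-\alpha})\}$; on $\mathcal{E}_N$ some eigenvalue of $P(X^N,A^N)$ lies in $\{f_N = 1\}$, so $\1_{\mathcal{E}_N} \leq \tr_N(f_N(P(X^N,A^N)))$. Markov's inequality combined with \eqref{3mainresu0} (valid, for any fixed $k$, as soon as $N$ is large enough that $k \leq cNn^{-1}$) then yields
$$
\P(\mathcal{E}_N)\ \leq\ \E[\tr_N(f_N(P(X^N,A^N)))]\ =\ N\cdot \E\!\left[\frac{1}{N}\tr_N(f_N(P(X^N,A^N)))\right]\ \leq\ D_{P,k}\cdot N^{1-2(k+1)+(4(k+1)+2)\alpha},
$$
where $D_{P,k}$ gathers all factors depending on $P$, $k$, and $\sup_N \norm{A^N}$ but not on $N$; the uniform bound on $\norm{A^N}$ is what keeps the quantity $K_N$ of Theorem \ref{3lessopti} uniformly bounded in $N$.

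The exponent of $N$ simplifies to $-1 + 2k(2\alpha-1) + 6\alpha$. Since $\alpha < 1/2$ the coefficient of $k$ is strictly negative, and choosing $k$ large enough (depending only on $\alpha$) makes this exponent arbitrarily negative; in particular the probabilities $\P(\mathcal{E}_N)$ are summable in $N$, and Borel-Cantelli gives the almost sure statement. The main subtlety is this balancing act between three competing powers of $N$: the gain $N^{-2(k+1)}$ from the Taylor expansion, the loss $N$ in passing from $\frac{1}{N}\tr_N$ to $\tr_N$, and the loss $N^{(4(k+1)+2)\alpha}$ from the $\CC^m$-norm of a cut-off on scale $N^{-\alpha}$. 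The condition $\alpha < 1/2$ is exactly what makes this balance favorable once $k$ is taken large enough.
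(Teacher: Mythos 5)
Your proposal is correct and follows essentially the same route as the paper: a smooth cut-off vanishing on a neighborhood of $\sigma(P(x,A^N))$ and equal to $1$ at distance $N^{-\alpha}$, whose $\mathcal{C}^{4(k+1)+2}$-norm grows like $N^{(4(k+1)+2)\alpha}$, combined with the vanishing of the coefficients $\alpha_i^P$ from Theorem \ref{3lessopti}, Markov's inequality and Borel--Cantelli, with the same exponent bookkeeping showing $\alpha<1/2$ suffices once $k$ is large. The only difference is cosmetic: you build the cut-off by mollifying an indicator, while the paper sums explicit plateau functions over the connected components of the complement of the spectrum; both yield the same norm bounds.
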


\noindent In the case of a single GUE matrix, much more precise results were obtained by Tracy and Widom in \cite{TW1}. They proved the existence of a continuous decreasing function $F_2$ from $\R$ to $[0,1]$ such that if $\lambda_1(X^N)$ denotes the largest eigenvalue of $X^N$,
$$ \lim_{N\to \infty} P\big(N^{2/3} (\lambda_1(X^N) - 2) \geq s\big) = F_2(s).$$ 

\noindent This was generalized to $\beta$-matrix models in \cite{figalli1} and to polynomials in independent GUE matrices which are close to the identity in \cite{figalli2}. But there is no such result for general polynomials in independent GUE matrices. However, with Theorem \ref{3lessopti} we managed to get an estimate on the tail of the distribution of $\sqrt{N} \norm{P(X^N,A^N)}$.

\begin{cor}
	\label{3boundednormrenm}
	Let $X^N$ be a family of independent $GUE$ matrices of size $N$, $A^N = (A_1^N,\dots,A_r^N, {A_1^N}^*,$ $\dots, {A_r^N}^*)$ a family of deterministic matrices whose norm is uniformly bounded over $N$, $x$ a free semicircular system and $P$ a polynomial. Then there exists a constant $C$ such that for any $\delta>0$ and $N$ large enough,
	$$ \P\left( \frac{\sqrt{N}}{\ln^4 N} \left(\norm{P(X^N,A^N)} - \norm{P(x,A^N)}\right) \geq C\frac{\delta + 1}{\norm{P(x,A^N)}} \right) \leq e^{-N} + e^{-\delta^2 \ln^8N} . $$
\end{cor}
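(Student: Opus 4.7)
The plan is to bound the tail of $\norm{P(X^N,A^N)}$ by applying Theorem \ref{3lessopti} to a smooth cutoff placed just beyond the edge of the limiting spectrum, and then invoking Markov's inequality. First I would reduce to a self-adjoint polynomial: since $\norm{P(Y)}^2 = \norm{Q(Y)}$ for $Q = PP^*$ and $Q$ is self-adjoint, it suffices to control $\norm{Q(X^N,A^N)}$. Set $a_N = \norm{Q(x,A^N)}$; since $Q(x,A^N)$ is positive, $\sigma(Q(x,A^N)) \subseteq [0,a_N]$.

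Fix once and for all a smooth $g:\R\to[0,1]$ with $g\equiv 0$ on $(-\infty,1/2]$ and $g\equiv 1$ on $[1,\infty)$, and for a scale $\mu>0$ to be chosen, set $f_N(s) = g(\mu(s-a_N))$. Its support $[a_N+1/(2\mu),\infty)$ is disjoint from $\sigma(Q(x,A^N))$, so Theorem \ref{3lessopti} forces every coefficient $\alpha_i^Q(f_N,A^N)$ to vanish, and yields
$$ \E\bigl[\tr_N\bigl(f_N(Q(X^N,A^N))\bigr)\bigr]\ \leq\ \frac{\norm{f_N}_{\mathcal{C}^{4(k+1)+2}}}{N^{2k+1}}\bigl(Cn^2 K_N^n C_{\max}\mathbf{m}\bigr)^{4(k+1)+1} k^{12k}\.$$
Using $\norm{f_N}_{\mathcal{C}^j}\leq (j+1)\mu^j\norm{g}_{\mathcal{C}^j}$ with $g$ chosen so that its $\mathcal{C}^j$ norms grow at most like $(Cj)^j$, the right-hand side becomes a concrete function of $N$, $k$, and $\mu$. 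Since $f_N \geq \1_{[a_N+1/\mu,\infty)}$, Markov's inequality gives
$$ \P\bigl(\norm{Q(X^N,A^N)}\geq a_N + 1/\mu\bigr)\ \leq\ \E\bigl[\tr_N\bigl(f_N(Q(X^N,A^N))\bigr)\bigr]\.$$

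To recover the claimed threshold and rate, I would choose $\mu\sim\sqrt{N}/((\delta+1)\ln^4 N)$: via $\sqrt{a_N+1/\mu}-\sqrt{a_N}\approx 1/(2\sqrt{a_N}\mu)$, a $\norm{Q}$ bound at level $a_N+1/\mu$ becomes precisely a $\norm{P}$ bound at threshold $C(\delta+1)\ln^4 N/(\sqrt{N}\norm{P(x,A^N)})$. With this $\mu$, optimizing $k$ within the admissible range $k\leq cNn^{-1}$ brings the expression $N^{-2k-1}(\mu k^{O(1)})^{O(k)}$ down to $e^{-\delta^2\ln^8 N}$. The residual $e^{-N}$ term absorbs the large-deviation regime where the preceding analysis is not yet conclusive: standard Gaussian concentration yields $\P(\norm{X^N}>K_0)\leq e^{-cN}$ for any fixed $K_0>2$, providing both the a priori boundedness needed for the Markov step and the admissibility of our $k$. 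The main obstacle will be this final optimization: one must carefully balance the $\mu^{O(k)}$ growth of $\norm{f_N}_{\mathcal{C}^j}$, the combinatorial $k^{12k}$ (or the sharper $k^{3k}$ afforded by Theorem \ref{3TTheo}, which I would prefer to invoke here), and the $N^{-2k}$ decay so that the prefactor $(\delta+1)/\norm{P(x,A^N)}$ and the exponent $\delta^2\ln^8 N$ emerge simultaneously.
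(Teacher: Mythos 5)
There is a genuine gap: your single-threshold Markov argument cannot produce the stated tail $e^{-\delta^2\ln^8N}$. With your cutoff at $a_N+1/\mu$, $\mu\sim\sqrt N/((\delta+1)\ln^4N)$, the bound you get from the expansion is of the form $N\,(C\,k^{c}\,\mu^{4}/N^{2})^{k}$ up to lower-order factors, i.e. $N\,\bigl(Ck^{c}/((\delta+1)^{4}\ln^{16}N)\bigr)^{k}$. Optimizing over $k$ (and the optimum stays far below the constraint $k\leq cNn^{-1}$) gives at best $\exp\bigl(-c'(\delta+1)^{4/3}\ln^{16/3}N\bigr)$ if one could really use the $k^{3k}$ of Theorem \ref{3TTheo}, and only $\exp\bigl(-c'(\delta+1)^{1/3}\ln^{4/3}N\bigr)$ with the $k^{12k}$ that actually reappears once the derivatives of the dilated bump function are accounted for (this is exactly what Lemma \ref{3meilleurestime} in the paper yields: an extra $k^{12k}$, not $k^{3k}$, because the $\mathcal C^{4k}$-norms of the plateau function grow super-factorially). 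In either case the exponent is a power of $\ln N$ strictly smaller than $8$, so for fixed $\delta$ your bound is strictly weaker than $e^{-\delta^{2}\ln^{8}N}$; no choice of $k$ and $\mu$ fixes this, because the remainder of the expansion simply does not decay faster than $\exp(-\mathrm{poly}(\ln N))$ once the factorial constants are balanced against $N^{-2k}$.

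The missing idea is the splitting used in the paper: the expansion is only used to control the \emph{mean shift}, and the $\delta$-dependent tail comes from Gaussian concentration. Concretely, the paper integrates the Markov-type bound over the cutoff scale $\varepsilon\in[r,1]$ (with $k=\lceil\ln N\rceil$ and $r=N^{-1/2+4\ln\ln N/\ln N}$) to obtain $\E\bigl[(\norm{PP^*(X^N,A^N)}-\norm{PP^*(x,A^N)})_+\bigr]\leq 2\ln^4N/\sqrt N$, and then invokes the Lipschitz concentration inequality for $\norm{P^*P(X^N,A^N)}$ around its expectation (Proposition 4.6 of \cite{un}), $\P(|\norm{P^*P(X^N,A^N)}-\E[\norm{P^*P(X^N,A^N)}]|\geq \delta'+Ke^{-N})\leq e^{-N}+e^{-D\delta'^2N}$; taking $\delta'=\delta\ln^4N/\sqrt N$ is precisely what produces $e^{-\delta^2\ln^8N}$, and the passage from $P^*P$ to $P$ is done by dividing by $\norm{P(X^N,A^N)}+\norm{P(x,A^N)}\geq\norm{P(x,A^N)}$, as in your square-root step. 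Your proposal only uses Gaussian concentration for the crude event $\{\norm{X^N}>K_0\}$, which supplies the $e^{-N}$ term but not the fluctuation bound; without the concentration-around-the-mean step the corollary as stated is out of reach of your argument.
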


This corollary is similar to Theorem 1.5 obtained in \cite{un}, but with a substantial improvement on the exponent since in this theorem, instead of $N^{1/2}$, we only had $N^{1/4}$. Theorem 1.5 of \cite{un} also gave a similar bound on the probability that $\norm{P(X^N)}$ be smaller than its deterministic limit, but Theorem \ref{3TTheo} does not yield any improvement on this inequality. The proof of this corollary can be summarized in two steps: first use measure concentration to get an estimate on the probability that $\norm{P(X^N,A^N)}$ is far from its expectation, and secondly use Theorem \ref{3lessopti} to estimate the difference between the expectation and the deterministic limit. Finally it is worth noting that the exponent $1/2$ comes from the fact that for every $N^2$ that we gain in Equation \eqref{3mainresu0}, we also have to differentiate our function $f$ four more times. Thus, if we take $f:x\to g(N^{\alpha}x)$ where $g$ is smooth, then in order for $N^{-2}$ to compensate the differential, we have to take $\alpha = 1/2$. If we only had to differentiate our function three more times, then we could take $\alpha = 2/3$ which is the same exponent as in Tracy-Widom.

\section{Framework and standard properties}
\label{3definit}

\subsection{Usual definitions in free probability}
\label{3deffree}

In order to be self-contained, we begin by recalling the following definitions from free probability.

\begin{defi}~
	\label{3freeprob}
	\begin{itemize}
		\item A \textbf{$\mathcal{C}^*$-probability space} $(\A,*,\tau,\norm{.}) $ is a unital $\mathcal{C}^*$-algebra $(\A,*,\norm{.})$ endowed with a \textbf{state} $\tau$, i.e. a linear map $\tau : \A \to \C$ satisfying $\tau(1_{\A})=1$ and $\tau(a^*a)\geq 0$ for all $a\in \A$. In this paper we always assume that $\tau$ is a \textbf{trace}, i.e. that it satisfies $\tau(ab) = \tau(ba) $ for any $a,b\in\A$. An element of $\A$ is called a 
		\textbf{noncommutative random variable}. We will always work with a faithful trace, namely, for $a\in\A$, $\tau(a^*a)=0$ if and only if $a=0$.
		
		\item Let $\A_1,\dots,\A_n$ be $*$-subalgebras of $\A$, having the same unit as $\A$. They are said to be \textbf{free} if for all $k$, for all $a_i\in\A_{j_i}$ such that $j_1\neq j_2$, $j_2\neq j_3$, \dots , $j_{k-1}\neq j_k$:
		\begin{equation}
			\label{kddkdxkfl}
			\tau\Big( (a_1-\tau(a_1))(a_2-\tau(a_2))\dots (a_k-\tau(a_k)) \Big) = 0.
		\end{equation}
		Families of noncommutative random variables are said to be free if the $*$-subalgebras they generate are free.
		
		\item Let $ A= (a_1,\ldots ,a_k)$ be a $k$-tuple of random variables. The \textbf{joint $*$-distribution} of the family $A$ is the linear form $\mu_A : P \mapsto \tau\big[ P(A, A^*) \big]$ on the set of polynomials in $2k$ noncommutative variables. By \textbf{convergence in distribution}, for a sequence of families of variables $(A_N)_{N\geq 1} = (a_{1}^{N},\ldots ,a_{k}^{N})_{N\geq 1}$ 
		in $\mathcal C^*$-algebras $\big( \mathcal A_N, ^*, \tau_N, \norm{.} \big)$,
		we mean the pointwise convergence of
		the map 
		$$ \mu_{A_N} : P \mapsto \tau_N \big[ P(A_N, A_N^*) \big],$$
		and by \textbf{strong convergence in distribution}, we mean convergence in distribution, and pointwise convergence
		of the map
		$$P \mapsto \big\| P(A_N, A_N^*) \big\|.$$
		
		\item A family of noncommutative random variables $ x=(x_1,\dots ,x_d)$ is called a \textbf{free semicircular system} when the noncommutative random variables are free, self-adjoint ($x_i=x_i^*$), and for all $k$ in $\N$ and $i$, one has
		\begin{equation*}
		\tau( x_i^k) =  \int_{\R} t^k d\sigma(t),
		\end{equation*}
		with $d\sigma(t) = \frac 1 {2\pi} \sqrt{4-t^2} \ \mathbf 1_{|t|\leq2} \ dt$ the semicircle distribution.
		
	\end{itemize}
	
\end{defi}

It is important to note that thanks to \cite[Theorem 7.9]{nica_speicher_2006}, which we recall below, one can consider free copies of any noncommutative random variable.

\begin{theorem}
	\label{3freesum}
	
	Let $(\A_i,\phi_i)_{i\in I}$ be a family of $\mathcal{C}^*$-probability spaces such that the functionals $\phi_i : \A_i\to\C$, $i\in I$, are faithful traces. Then there exist a $\mathcal{C}^*$-probability space $(\A,\phi)$ with $\phi$ a faithful trace, and a family of norm-preserving unital $*$-homomorphism $W_i: \A_i\to\A$, $i\in I$, such that:
	
	\begin{itemize}
		\item $\phi \circ W_i = \phi_i$, $\forall i \in I$.
		\item The unital $\mathcal{C}^*$-subalgebras $W_i(\A_i)$, $i\in I$, form a free family in $(\A,\phi)$.
	\end{itemize}
\end{theorem}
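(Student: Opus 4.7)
The plan is to build the free product concretely via the GNS construction and then realize the $\A_i$'s as operators on a single Hilbert space in such a way that freeness is automatic. First, for each $i\in I$, apply the GNS construction to $(\A_i,\phi_i)$ to obtain a Hilbert space $H_i$, a cyclic unit vector $\xi_i$, and a unital $*$-representation $\pi_i:\A_i\to B(H_i)$ such that $\phi_i(a)=\langle \pi_i(a)\xi_i,\xi_i\rangle$. Since $\phi_i$ is faithful on $\A_i$, the representation $\pi_i$ is isometric, so we may and do identify $\A_i$ with $\pi_i(\A_i)\subset B(H_i)$. Split $H_i=\C\xi_i\oplus H_i^\circ$ where $H_i^\circ=\xi_i^\perp$.

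Next, form the free product Hilbert space
\[
H \;=\; \C\Omega \;\oplus\; \bigoplus_{n\geq 1}\;\bigoplus_{\substack{i_1,\ldots,i_n\in I\\ i_1\neq i_2,\ldots, i_{n-1}\neq i_n}} H_{i_1}^\circ\otimes\cdots\otimes H_{i_n}^\circ,
\]
with distinguished unit vector $\Omega$. For each fixed $j\in I$, decompose $H\cong H_j\otimes K_j$, where $K_j$ is the closed span of $\Omega$ together with all alternating tensors whose first index differs from $j$; under this identification $\xi_j\otimes\Omega$ corresponds to $\Omega$, and $\eta\otimes\Omega$ for $\eta\in H_j^\circ$ corresponds to the one-letter tensor $\eta$. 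Define $W_j:\A_j\to B(H)$ by transporting $\pi_j(a)\otimes \id_{K_j}$ along this isomorphism. Each $W_j$ is a unital $*$-homomorphism, and since $\pi_j$ is isometric, so is $W_j$. Let $\A\subset B(H)$ be the $\mathcal{C}^*$-algebra generated by $\bigcup_j W_j(\A_j)$, and set $\phi(T)=\langle T\Omega,\Omega\rangle$.

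The compatibility $\phi\circ W_j=\phi_j$ is immediate from $W_j(a)\Omega = \pi_j(a)\xi_j$ (under the isomorphism $H\cong H_j\otimes K_j$). Freeness of the family $(W_j(\A_j))_{j\in I}$ in $(\A,\phi)$ follows from the key computation: if $a_\ell\in\A_{j_\ell}$ are centered, i.e.\ $\phi_{j_\ell}(a_\ell)=0$, and $j_1\neq j_2,\ldots,j_{k-1}\neq j_k$, then by induction on $k$ one checks that
\[
W_{j_1}(a_1)W_{j_2}(a_2)\cdots W_{j_k}(a_k)\Omega \;=\; \pi_{j_1}(a_1)\xi_{j_1}\otimes\pi_{j_2}(a_2)\xi_{j_2}\otimes\cdots\otimes\pi_{j_k}(a_k)\xi_{j_k},
\]
which lies in $H_{j_1}^\circ\otimes\cdots\otimes H_{j_k}^\circ$ and is therefore orthogonal to $\Omega$. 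Hence $\phi$ of such an alternating product vanishes.

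The main obstacle is proving that $\phi$ is a faithful trace on $\A$, since neither property is automatic for the vacuum state on a reduced free product. For the trace property, I would argue by induction on the length of alternating words: using the identity $\phi(W_{j_1}(a_1)\cdots W_{j_k}(a_k))=\phi(W_{j_2}(a_2)\cdots W_{j_k}(a_k)W_{j_1}(a_1))$ when $j_1\neq j_k$, which reduces via centering to the freeness moment-cumulant formalism, and when $j_1=j_k$ by using the trace property of $\phi_{j_1}$ to collapse the two ends. Faithfulness follows by constructing, for each $j$, a $\phi$-preserving conditional expectation $E_j:\A\to W_j(\A_j)$ via the orthogonal projection onto the $W_j(\A_j)$-cyclic subspace of $H$ generated by $\Omega$; if $\phi(T^*T)=0$, then $E_j(T^*T)=0$ in $W_j(\A_j)$, and faithfulness of $\phi_j$ forces $T\Omega$ to have no component along the words ending in index $j$. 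Ranging over $j$ and iterating down the length of words shows $T\Omega=0$, and then using the $\A$-cyclicity of $\Omega$ together with the tracial property yields $T=0$.
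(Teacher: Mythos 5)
The paper does not actually prove this statement: Theorem \ref{3freesum} is quoted as background from \cite[Theorem 7.9]{nica_speicher_2006}, so there is no internal proof to compare against. Your construction --- GNS representations of the $(\A_i,\phi_i)$, the free product Hilbert space with vacuum vector $\Omega$, the identification $H\cong H_j\otimes K_j$ used to define the embeddings $W_j$, the vacuum state $\phi=\langle\,\cdot\;\Omega,\Omega\rangle$, and the computation that centered alternating products send $\Omega$ into $H_{j_1}^\circ\otimes\cdots\otimes H_{j_k}^\circ$ --- is precisely the standard reduced free product argument given in the cited reference, and its main steps (compatibility $\phi\circ W_j=\phi_j$, freeness, traciality via the fact that free subalgebras with tracial restrictions generate a tracial state) are correct.

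One remark on your faithfulness step: the detour through conditional expectations $E_j$ is both unnecessary and shakier than the rest, since the claim that $E_j(T^*T)=0$ together with faithfulness of $\phi_j$ controls the components of $T\Omega$ along words ending in $j$ is not justified as stated. It is also not needed: because $\phi$ is the vector state at $\Omega$, the hypothesis $\phi(T^*T)=0$ gives $\|T\Omega\|^2=0$, i.e.\ $T\Omega=0$, immediately. The genuine content is exactly what you invoke at the end: traciality gives $\phi(TT^*)=\phi(T^*T)=0$, hence $T^*\Omega=0$, and then for any $S\in\A$ one has $\|TS\Omega\|^2=\phi(S^*T^*TS)=\phi(TSS^*T^*)\leq \norm{S}^2\phi(TT^*)=0$, so $T$ vanishes on the dense subspace $\A\Omega$ and $T=0$. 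With that simplification (and a full proof of the traciality lemma, which you only sketch), your argument is the standard one.
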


Let us finally fix a few notations concerning the spaces and traces that we use in this paper.

\begin{defi}
	\label{3tra}
	\begin{itemize}
		\item $(\A_N,\tau_N)$ is the free product $\M_N(\C) * \mathcal{C}_d$ of $\M_N(\C)$ with $\mathcal{C}_d$ the $\CC^*$-algebra generated by a system of $d$ free semicircular variables, that is the $\mathcal{C}^*$-probability space built in Theorem \ref{3freesum}. Note that when restricted to $\M_N(\C)$, $\tau_N$ is just the renormalized trace on matrices. The restriction of $\tau_{N}$ to the $\mathcal{C}^*$-algebra generated by the free semicircular system $x$ is denoted by $\tau$. Note that one can view this space as the limit of a matrix space, we refer to \cite[Proposition 3.5]{un}. 
		\item $\tr_N$ is the non-renormalized trace on $\M_N(\C)$.
		\item $\ts_N$ is the renormalized trace on $\M_N(\C)$.
		\item We denote $E_{r,s}$ the matrix with $1$ in the $(r,s)$ entry and zeros in all the other entries. 
		\item We regularly identify $\M_N(\C)\otimes \M_k(\C)$ with $\M_{kN}(\C)$ through the isomorphism $E_{i,j}\otimes E_{r,s} \mapsto E_{i+rN,j+sN} $, similarly we identify $\tr_N\otimes\tr_k$ with $\tr_{kN}$.
		\item $\id_N\otimes \ts_k$ is the conditional expectation from $\M_N(\C)\otimes \M_k(\C)$  to $\M_N(\C)$. Basically it is the tensor product of the identity map $\id_N:\M_N(\C)\to\M_N(\C)$ and the renormalized trace on $\M_k(\C)$.
		\item If $A^N=(A_1^N,\dots,A_d^N)$ and $B^k=(B_1^k,\dots,B_d^k)$ are two families of random matrices, then we denote $A^N\otimes B^k=(A_1^N\otimes B^k_1,\dots,A_d^N\otimes B^k_d)$. We typically use the notation $X^N\otimes I_k$ for the family $(X^N_1\otimes I_k,\dots,X^N_1\otimes I_k)$.
	\end{itemize}
\end{defi}

\subsection{Noncommutative polynomials and derivatives}
\label{3poly}

Let $\PP_{d,2r}=\C\langle X_1,\dots,X_d,Y_1,\dots,Y_{2r}\rangle$ be the set of noncommutative polynomials in $d+2r$ variables. We set $q=2r$ to simplify notations. For any fixed $A\in\R_+^*$, one defines

\begin{equation}
\label{3normA}
\norm{P}_A = \sum_{M \text{ monomial}} |c_M(P)| A^{\deg M} \,
\end{equation}

\noindent where $c_M(P)$ is the coefficient of $P$ for the monomial $M$ and $\deg M$ the total degree of $M$ (that is the sum of its degree in each letter $X_1,\dots,X_d,Y_1,\dots,Y_{2r}$). Let us define several maps which we use frequently in the sequel. First, for $A,B,C\in \PP_{d,q}$, let

\begin{equation}
\label{3defperdu}
A\otimes B \# C = ACB,\ A\otimes B \widetilde{\#} C = BCA,\ m(A\otimes B) = BA.
\end{equation}

\noindent We define an involution $*$ on $\PP_{d,q}$ by $X_i^* = X_i$, $Y_i^* = Y_{i+r}$ if $1\leq i\leq r$, $Y_i^* = Y_{i-r}$ else, and then we extend it to $\PP_{d,q}$ by linearity and the formula $(\alpha P Q)^* = \overline{\alpha} Q^* P^*$. $P\in \PP_{d,q}$ is said to be self-adjoint if $P^* = P$. Self-adjoint polynomials have the property that if $x_1,\dots,x_d,z_1,\dots,z_r$ are elements of a $\mathcal{C}^*$-algebra such that $x_1,\dots,x_d$ are self-adjoint, then so is $P(x_1,\dots,x_d,z_1,\dots,z_r,z_1^*,\dots,z_r^*)$.

Finally let us define the noncommutative derivative, it is a widely used tool in the field of probability, see for example the work of Voiculescu, \cite{refdif} and \cite{refdif2}.

\begin{defi}
	\label{3application}
	
	If $1\leq i\leq d$, one defines the \textbf{noncommutative derivative} $\partial_i: \PP_{d,q} \longrightarrow \PP_{d,q} \otimes \PP_{d,q}$  by its value on a monomial  $M\in \PP_{d,q}$  given by
	$$ \partial_i M = \sum_{M=AX_iB} A\otimes B \,$$
	and then extend it by linearity to all polynomials. We can also define $\partial_i$ by induction with the formulas,
	\begin{equation}
	\label{3leibniz}
	\begin{array}{ccc}
	&\forall P,Q\in \mathcal{A}_{d,q},\quad \partial_i (PQ) = \partial_i P \times \left(1\otimes Q\right)	 + \left(P\otimes 1\right) \times \partial_i Q , \\
	& \\
	&\forall i,j,\quad \partial_i X_j = \delta_{i,j} 1\otimes 1,\quad \partial_i Y_j =0. \end{array}
	\end{equation}
	Similarly, with $m$ as in \eqref{3defperdu}, one defines the \textbf{cyclic derivative}  $D_i: \PP_{d,q} \longrightarrow \PP_{d,q}$ for $P\in \PP_{d,q}$ by
	$$ D_i P = m\circ \partial_i P \ . $$
	
\end{defi}

In this paper however, we will need to work not only with polynomials but also with more general functions, since we will work with the Fourier transform we introduce the following space.

\begin{defi}
	We define $\mathcal{S} = \left\{ R\in \mathcal{A}_{d,q}\ |\ R^*=R \right\}$, then one set
	$$\mathcal{F}_{d,q} = \C\big\langle (E_R)_{R\in\S}, X_1,\dots,X_d,Y_1,\dots,Y_{2r}\big\rangle.$$
	Then given  $ z = (x_1,\dots,x_d,y_1,\dots,y_r,y_1^*,\dots,y_r^*)$ elements of a $\CC^*$-algebra, one can define by induction the evaluation of an element of $\F_{d,q}$ in $z$ by following the following rules:
	\begin{itemize}
		\item $\forall Q\in\PP_{d,q}$, $Q(z)$ is defined as usual,
		\item $\forall Q_1,Q_2\in \F_{d,q}$, $(Q_1+Q_2)(z)= Q_1(z)+Q_2(z)$, $(Q_1Q_2)(z)= Q_1(z)Q_2(z)$,
		\item $\forall R\in\S$, $E_R(z) = e^{\i R(z)}$.
	\end{itemize}
	One can extend the involution $*$ from $\PP_{d,q}$ to $\mathcal{F}_{d,q}$ by setting $(E_R)^* = E_{(-R)} $, and then again we have that if $Q\in\F_{d,q}$ is self-adjoint, then so is $Q(z)$. Finally in order to make notations more transparent, we will usually write $e^{\i R}$ instead of $E_R$.
\end{defi}
Note that for technical reasons that we explain in Remark \ref{3quotient}, one cannot view $\mathcal{F}_{d,q}$ as a subalgebra of the set of formal power series in $X_1,\dots,X_d,Y_1,\dots,Y_{2r}$. This is why we need to introduce the notation $E_R$.

Now, as we will see in Proposition \ref{3duhamel}, a natural way to extend the definition of $\partial_i$ (and $D_i$) to $\F_{d,q}$ is by setting
\begin{equation}
\label{3ext}
\partial_i e^{\i Q} = \i \int_0^1 \big(e^{\i \alpha Q}\otimes 1\big)\ \partial_i Q\ \big(1\otimes e^{\i (1-\alpha) Q}\big) d\alpha .
\end{equation}

\noindent However, we cannot define the integral properly on $\mathcal{F}_{d,q}\otimes \mathcal{F}_{d,q}$. After evaluating our polynomials in $\CC^*$-algebras, the integral will be well-defined as we will see. Firstly, we need to define properly the operator norm of tensor of $\CC^*$-algebras. We work with the minimal tensor product also named the spatial tensor product. For more information we refer to \cite[Chapter 6]{murphy}.

\begin{defi}
	\label{3mini}
	Let $\A$ and $\B$ be $\CC^*$-algebra with faithful representations $(H_{\A},\phi_{\A})$ and $(H_{\B},\phi_{\B})$, then if $\otimes_2$ is the tensor product of Hilbert spaces, $\A\otimes_{\min}\B$ is the completion of the image of $\phi_{\A}\otimes\phi_{\B}$ in $B(H_{\A}\otimes_2 H_{\B})$ for the operator norm in this space. This definition is independent of the representations that we fixed.
\end{defi}

In particular, it is important to note that if $\A = \M_N(\C)$, then up to isomorphism $\A \otimes_{\min} \A$ is simply $\M_{N^2}(\C)$ with the usual operator norm. The main reason we pick this topology is for the following lemma. It is mainly a consequence of \cite[Lemma 4.1.8]{ozabr}.
\begin{lemma}
	\label{1faith}
	Let $(\A,\tau_{\A})$ and $(\B,\tau_{\B})$ be $\CC^*$-algebra with faithful traces, then $\tau_{\A}\otimes\tau_{\B}$ extends uniquely to a faithful trace $\tau_{\A}\otimes_{\min}\tau_{\B}$ on $\A\otimes_{\min}\B$. 
\end{lemma}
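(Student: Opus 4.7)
The plan is to invoke the GNS construction on both factors and then recognize the desired extension as the vector state coming from the tensor of the cyclic GNS vectors. More precisely, let $(\pi_{\A},H_{\A},\xi_{\A})$ and $(\pi_{\B},H_{\B},\xi_{\B})$ be the GNS triples of $(\A,\tau_{\A})$ and $(\B,\tau_{\B})$. Because $\tau_{\A}$ and $\tau_{\B}$ are faithful, both $\pi_{\A}$ and $\pi_{\B}$ are faithful representations, and each cyclic vector $\xi_{\A}$, $\xi_{\B}$ is in addition separating. By the definition of the minimal (spatial) tensor product recalled in Definition \ref{3mini}, $\pi_{\A}\otimes\pi_{\B}$ extends to an isometric $*$-homomorphism of $\A\otimes_{\min}\B$ into $B(H_{\A}\otimes_{2}H_{\B})$, and this extension is exactly the content of Lemma 4.1.8 of \cite{ozabr} that one wants to use.

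Next, define
\[
(\tau_{\A}\otimes_{\min}\tau_{\B})(T) \;=\; \bigl\langle (\pi_{\A}\otimes\pi_{\B})(T)\,(\xi_{\A}\otimes\xi_{\B}),\ \xi_{\A}\otimes\xi_{\B}\bigr\rangle.
\]
This is manifestly a state on $\A\otimes_{\min}\B$ which agrees with $\tau_{\A}\otimes\tau_{\B}$ on elementary tensors $a\otimes b$, hence on the whole algebraic tensor product by linearity. The trace property need only be checked on the algebraic tensor product, where it is immediate from $\tau_{\A}(a_1a_2)=\tau_{\A}(a_2a_1)$ and $\tau_{\B}(b_1b_2)=\tau_{\B}(b_2b_1)$; since the algebraic tensor product is norm-dense in $\A\otimes_{\min}\B$ and the state is continuous, the trace property passes to the whole algebra. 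Uniqueness of the extension follows from the same density argument applied to any other norm-continuous extension.

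The key step is faithfulness. Suppose $(\tau_{\A}\otimes_{\min}\tau_{\B})(T^{*}T)=0$ for some $T\in\A\otimes_{\min}\B$. Writing $\xi:=\xi_{\A}\otimes\xi_{\B}$, this means $\|(\pi_{\A}\otimes\pi_{\B})(T)\xi\|^{2}=0$, so $(\pi_{\A}\otimes\pi_{\B})(T)\xi=0$. Now use the trace property on $\A\otimes_{\min}\B$ together with the identity
\[
(\tau_{\A}\otimes_{\min}\tau_{\B})\bigl(T^{*}a^{*}aT\bigr) \;=\; (\tau_{\A}\otimes_{\min}\tau_{\B})\bigl(aTT^{*}a^{*}\bigr)
\]
for any $a$ in the algebraic tensor product, which rewrites as $\|(\pi_{\A}\otimes\pi_{\B})(aT)\xi\|^{2}=\|(\pi_{\A}\otimes\pi_{\B})(T^{*}a^{*})\xi\|^{2}$. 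The left side vanishes by the previous step, hence $(\pi_{\A}\otimes\pi_{\B})(T^{*})\bigl((\pi_{\A}\otimes\pi_{\B})(a^{*})\xi\bigr)=0$. As $a$ ranges over the algebraic tensor product, the vectors $(\pi_{\A}\otimes\pi_{\B})(a^{*})\xi$ range over a dense subspace of $H_{\A}\otimes_{2}H_{\B}$ (because $\xi_{\A}$ and $\xi_{\B}$ are cyclic), so $(\pi_{\A}\otimes\pi_{\B})(T^{*})=0$. Since $\pi_{\A}\otimes\pi_{\B}$ is faithful on $\A\otimes_{\min}\B$, this forces $T=0$.

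The main obstacle is indeed the faithfulness step: one must bootstrap from the vanishing of a single vector-state matrix element to the vanishing of the operator $T$, and the combination of the trace property with the cyclicity of $\xi_{\A}\otimes\xi_{\B}$ is precisely what does the job. The injectivity of $\pi_{\A}\otimes\pi_{\B}$ on the spatial tensor product (the only nontrivial functional-analytic input) is imported from \cite[Lemma 4.1.8]{ozabr}; everything else is a bookkeeping consequence of the GNS construction and norm-density of the algebraic tensor product.
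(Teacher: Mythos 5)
Your proof is correct. There is in fact no internal argument in the paper to compare against: the paper simply defers this lemma to \cite[Lemma 4.1.8]{ozabr}. What you have written is essentially the standard proof behind that citation, spelled out: you realize $\A\otimes_{\min}\B$ on $H_{\A}\otimes_2 H_{\B}$ through the GNS representations of the two faithful traces (legitimate, since Definition \ref{3mini} permits any faithful representations and the minimal norm is independent of the choice), define the extension as the vector state at $\xi_{\A}\otimes\xi_{\B}$, verify the trace property on the algebraic tensor product and pass it to the closure by density together with joint norm-continuity of multiplication on bounded sets, and then bootstrap faithfulness: from $\pi(T)\xi=0$ and the trace identity you get $\pi(T^*)\pi(a^*)\xi=0$ for all $a$ in the algebraic tensor product, cyclicity of $\xi_{\A}\otimes\xi_{\B}$ gives $\pi(T^*)=0$ on a dense subspace hence everywhere, and injectivity of $\pi_{\A}\otimes\pi_{\B}$ on the spatial tensor product forces $T=0$. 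All steps are sound. Two cosmetic remarks: the separating property of the cyclic vectors is never actually used (cyclicity suffices), and the one nontrivial functional-analytic input you import from \cite{ozabr} --- that $\pi_{\A}\otimes\pi_{\B}$ is isometric on $\A\otimes_{\min}\B$ --- is already built into the paper's Definition \ref{3mini} through the asserted independence of the minimal norm from the chosen faithful representations, so your argument is self-contained relative to the paper.
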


It is not necessary to understand in depth the minimal tensor product to read the rest of the paper. Indeed, we won't directly make use of this property in this paper, however it is necessary to introduce it to justify that every object in this paper is well-defined. Thus, we define the noncommutative differential on $\F_{d,q}$ as follows.

\begin{defi}
	\label{3technicality}
	For $\alpha\in [0,1]$, let $\partial_{\alpha,i}: \F_{d,q}\to \F_{d,q}\otimes \F_{d,q}$ which satisfies \eqref{3leibniz} and such that for any $P\in \mathcal{A}_{d,q}$ self-adjoint,
	$$ \partial_{\alpha,i} e^{\i P} = \i \big(e^{\i \alpha P}\otimes 1\big)\ \partial_i P\ \big(1\otimes e^{\i (1-\alpha) P}\big),\quad D_{\alpha,i} = m \circ \partial_{\alpha,i} . $$
	Then, given $ z = (z_1,\dots, z_{d+q})$ elements of a $\CC^*$-algebra, we define for any $Q\in \F_{d,q}$,
	$$ \partial_i Q(z) = \int_{0}^1 \partial_{\alpha,i} Q(z)\ d\alpha,\quad D_i Q(z) = \int_{0}^1 D_{\alpha,i} Q(z)\ d\alpha . $$
\end{defi}

\noindent Note that for any $P\in\PP_{d,q}$, since $\int_0^11d\alpha = 1$, we do also have that with $\partial_i Q$ defined as in Definition \ref{3application}, 
$$ \partial_i Q(z) = \int_{0}^1 \partial_{\alpha,i} Q(z)\ d\alpha .$$
Thus, Definition \ref{3technicality} indeed extends the definition of $\partial_i$ from $\PP_{d,q}$ to $\F_{d,q}$. Besides, it also means that we can define rigorously the composition of those maps. Since the map $\partial_{\alpha,i}$ goes from $\F_{d,q}$ to $\F_{d,q}\otimes \F_{d,q}$ it is very easy to do so. For example one can define the following operator. We will use a similar one later on, see also Example \ref{3exe33}.

\begin{defi}
	\label{3operatordef}
	Let $Q\in \F_{d,q}$, given $ z = (z_1,\dots, z_{d+q})$ elements of a $\CC^*$-algebra, let $i,j \in [1,d]$, with $\circ$ the composition of operators we define
	$$ (\partial_j\otimes \partial_j)\circ \partial_i\circ D_i Q(z) = \int_{[0,1]^4} (\partial_{\alpha_4,j}\otimes \partial_{\alpha_3,j})\circ\partial_{\alpha_2,i}\circ D_{\alpha_1,i} Q(z)\ d\alpha_1 d\alpha_2 d\alpha_3 d\alpha_4 .$$
\end{defi}

\noindent The definition \ref{3technicality} is the reason why one cannot view $\F_{d,q}$ as a subalgebra of the set of formal power series. More precisely one have the following remark.
\begin{rem}
\label{3quotient}
	In the set of formal power series, we have for example that $e^{X_1} e^{X_1} = e^{2X_1}$. However, when one defines the noncommutative differential $\partial_1$, then one must first define $\partial_{1,\alpha}$ such that
	$$ \partial_{1,\alpha} e^{X_1}e^{X_1} = e^{\alpha X_1}\otimes e^{(1-\alpha)X_1} e^{X_1} + e^{X_1} e^{\alpha X_1}\otimes e^{(1-\alpha)X_1},$$
	$$ \partial_{1,\alpha} e^{2X_1} = 2e^{2\alpha X_1}\otimes e^{2(1-\alpha)X_1} .$$
	And then we set for some element $x$ of a $\CC^*$-algebra,
	$$ \partial_{1} e^{X_1}e^{X_1}(x) = \int_{0}^{1}e^{\alpha X_1}\otimes e^{(1-\alpha)X_1} e^{x} + e^{x} e^{\alpha x}\otimes e^{(1-\alpha)x}\ d\alpha,$$
	$$ \partial_{1} e^{2X_1}(x) = 2\int_{0}^{1}e^{2\alpha x}\otimes e^{2(1-\alpha)x}\ d\alpha.$$
	And while with this construction we do have that $ \partial_{1} e^{X_1}e^{X_1}(x) = \partial_{1} e^{2X_1}(x)$, trying to define the noncommutative derivative on the set of power series forces us to consider this problem in all generality which we woud rather avoid. Besides, in any case, we do not have that $\partial_{1,\alpha} e^{X_1}e^{X_1}(x) = \partial_{1,\alpha} e^{2X_1}(x)$.
\end{rem}

If $P\in \PP_{d,q}$, $ z = (z_1,\dots, z_{d+q})$ belongs to a $\CC^*$-algebra $\A$, then we naturally have that
$$(\partial_i P^k) (z) = \sum_{l=1}^k \left(P^{l-1}(z)\otimes 1\right) \partial_iP(z) \left(1\otimes P^{k-l}(z)\right), $$
which is an element of $\A \otimes_{\min} \A$. Besides, there exists a constant $C_P(z)$ independent of $k$ such that $\norm{(\partial_i P^k) (z)} \leq C_P(z) k \norm{P(z)}^{k-1}$. Thus, it would seem natural to define
\begin{equation}
\label{3extension}
(\partial_i e^{P}) (z) = \lim\limits_{n\to\infty} \partial_i\left(\sum_{ 1\leq k \leq n} \frac{P^k}{k!} \right)(z) = \sum_{k\in \N} \frac{1}{k!} (\partial_i P^k) (z) ,
\end{equation}

\noindent as an element of $\A \otimes_{\min} \A$. It turns out that this definition is compatible with Definition \ref{3technicality} thanks to the following proposition (see \cite[Proposition 2.2]{deux} for the proof).

\begin{prop}
	\label{3duhamel}
	Let $P\in \PP_{d,q}$, $ z = (z_1,\dots, z_{d+q})$ elements of a $\CC^*$-algebra $\A$, then with $(\partial_i e^{P}) (z)$ defined as in \eqref{3extension},
	$$ \left(\partial_i e^P\right)(z) = \int_0^1 \left(e^{\alpha P(z)}\otimes 1\right)\ \partial_i P(z)\ \left(1\otimes e^{(1-\alpha)P(z)}\right) \ d\alpha . $$
	
\end{prop}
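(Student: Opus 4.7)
The plan is to do a direct computation: differentiate each $P^k$ via the Leibniz rule, sum the resulting series, and use a Beta-function identity to rewrite the combinatorial coefficient as an integral over $\alpha\in[0,1]$. Everything is carried out inside the $\CC^*$-algebra $\A\otimes_{\min}\A$, so that norms and interchanges of sum and integral are meaningful.

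First, by a straightforward induction on $k$ from \eqref{3leibniz},
$$ \partial_i(P^k)\ =\ \sum_{l=1}^{k}\ (P^{l-1}\otimes 1)\,\partial_i P\,(1\otimes P^{k-l})\ \in\ \A_{d,q}\otimes\A_{d,q}. $$
Evaluating at $z$ and using that $\|\cdot\|_{\min}$ is a cross-norm (so $\|a\otimes b\|_{\min}=\|a\|\,\|b\|$) yields
$$ \left\|\tfrac{1}{k!}(\partial_i P^k)(z)\right\|_{\min}\ \le\ \tfrac{k}{k!}\,\|P(z)\|^{k-1}\,\|\partial_i P(z)\|_{\min}. $$
Summing in $k$ gives a series that converges absolutely in $\A\otimes_{\min}\A$, with norm at most $e^{\|P(z)\|}\|\partial_i P(z)\|_{\min}$, so the object $(\partial_i e^{P})(z)$ defined by \eqref{3extension} is well-defined. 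Reindexing the double sum by $m=l-1,\ n=k-l\ge 0$ gives
$$ (\partial_i e^{P})(z)\ =\ \sum_{m,n\ge 0}\ \frac{1}{(m+n+1)!}\,(P(z)^m\otimes 1)\,\partial_i P(z)\,(1\otimes P(z)^n). $$

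Next, apply the classical Beta-function identity $\frac{1}{(m+n+1)!}=\frac{1}{m!\,n!}\int_0^1\alpha^m(1-\alpha)^n\,d\alpha$ and swap sum and integral. The interchange is legitimate because the integrand is dominated uniformly in $\alpha\in[0,1]$ by the summable majorant $\sum_{m,n}\frac{\|P(z)\|^{m+n}}{m!\,n!}\|\partial_i P(z)\|_{\min}\,=\,e^{\|P(z)\|}\|\partial_i P(z)\|_{\min}$, so dominated convergence applies in $\A\otimes_{\min}\A$. The right-hand side then becomes
$$ \int_0^1\left(\sum_{m\ge 0}\frac{(\alpha P(z))^m}{m!}\otimes 1\right)\partial_i P(z)\left(1\otimes\sum_{n\ge 0}\frac{((1-\alpha)P(z))^n}{n!}\right)d\alpha, $$
and recognizing the two exponential series produces exactly the claimed formula $\int_0^1(e^{\alpha P(z)}\otimes 1)\,\partial_i P(z)\,(1\otimes e^{(1-\alpha)P(z)})\,d\alpha$.

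The only point requiring genuine care — and the reason Definition \ref{3mini} and Lemma \ref{1faith} are set up before the statement — is that $\partial_i P(z)$ a priori lives only in the algebraic tensor product $\A\otimes\A$. For the left and right multiplications by $P(z)^m\otimes 1$ and $1\otimes P(z)^n$ to act boundedly, for the series to converge, and for dominated convergence to apply, one needs a bona fide $\CC^*$-norm on $\A\otimes\A$ that is a cross-norm on elementary tensors; this is precisely what $\|\cdot\|_{\min}$ provides. Once this setup is in place, the argument is purely bookkeeping, with no further conceptual obstacle.
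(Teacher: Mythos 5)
Your proof is correct: the term-by-term Leibniz expansion, the reindexing, the Beta-function identity $\int_0^1\alpha^m(1-\alpha)^n\,d\alpha=\frac{m!\,n!}{(m+n+1)!}$, and the normal convergence in $\A\otimes_{\min}\A$ justifying the interchange of sum and integral are all in order. Note that the paper does not prove Proposition \ref{3duhamel} itself but defers it to Proposition 2.2 of \cite{deux}, whose argument is essentially this same series-plus-Beta-integral computation, so your write-up simply fills in the deferred proof along the expected route.
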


\vspace*{1cm}

For the sake of clarity, we introduce the following notation which is close to Sweedler's convention. Its interest will be clear in Section \ref{3mainsec}.
\begin{defi}
	\label{3sweedler}
	Let $Q\in \F_{d,q}$, $\mathcal{C}$ be a $\mathcal{C}^*$-algebra, $\alpha : \F_{d,q}\to \mathcal{C}$ and $\beta : \F_{d,q}\to \mathcal{C}$ be morphisms. We also set $\mathfrak{m} : A\otimes B \in\mathcal{C}\otimes\mathcal{C}\mapsto AB \in \mathcal{C}$. Then we use the following notation,
	$$ \alpha(\partial_i^1 P) \boxtimes \beta(\partial_i^2 P) = \mathfrak{m}\circ((\alpha\otimes\beta)(\partial_i P)) . $$
\end{defi}

\noindent Heuristically, if $\partial_i P$ was a simple tensor, then $\partial_i^1 P$ would represent the left tensorand while  $\partial_i^2 P$ would represent the right one. However $\partial_i P$ usually is not a simple tensor and one cannot extend this definition by linearity. This notation is especially useful when our maps $\alpha$ and $\beta$ are simply evaluation of $P$ as it is the case in Section \ref{3mainsec}. Indeed, we will typically write $\partial_i^1P (X) \boxtimes \partial_i^2P (Y)$ rather than first defining $h_X: P\to P(X)$ and using the more cumbersome and abstract notation, $ \mathfrak{m}\circ(h_X\otimes h_Y)(\partial_i P)$. 
\\

The map $\partial_i$ is related to the so-called Schwinger-Dyson equations on semicircular variable thanks to the following proposition. One can find a proof for polynomials in \cite[Lemma 5.4.7]{alice}, and then extend it to $\F_{d,q}$ thanks to Proposition \eqref{3duhamel} and Lemma \ref{1faith}.

\begin{prop}
	\label{3SDE}
	Let $ x=(x_1,\dots ,x_d)$ be a free semicircular system, $y = (y_1,\dots,y_r)$ be noncommutative random variables free from $x$, if the 
	family $(x,y)$ belongs to the $\mathcal{C}^*$-probability space $(\A,*,\tau,\norm{.}) $, then for any $Q\in \F_{d,q}$,
	$$ \tau(Q(x,y,y^*)\ x_i) = \tau\otimes\tau(\partial_i Q(x,y,y^*))\ .$$
	
\end{prop}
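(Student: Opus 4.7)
My plan is to reduce to the polynomial case and then extend by a density/continuity argument, exactly along the lines indicated by the authors (``a proof for polynomials in \cite{alice}, Lemma 5.4.7, and then extend it to $\F_{d,q}$ thanks to Proposition \ref{3duhamel} and Lemma \ref{1faith}'').

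First I would establish the identity for $P\in\A_{d,q}$ polynomial. By linearity it suffices to treat a single monomial $M=Z_{j_1}\cdots Z_{j_n}$ in the letters $X_1,\dots,X_d,Y_1,\dots,Y_{2r}$. Writing $\tau(M(x,y,y^*)x_i)$ and using the fact that the free semicircular system is characterized by the vanishing of all but the length-$2$ free cumulants (with $\kappa_2(x_i,x_i)=1$), the moment decomposes as a sum over noncrossing pair partitions of $\{1,\dots,n+1\}$ in which the trailing $x_i$ is paired with some $Z_{j_k}=X_i$. By freeness of $x$ from $y$ and the recursive structure of noncrossing pairings, each such term factorizes as $\tau(Z_{j_1}\cdots Z_{j_{k-1}})\tau(Z_{j_{k+1}}\cdots Z_{j_n})$ evaluated at $(x,y,y^*)$, which is exactly the contribution of $M=AX_iB$ to $\tau\otimes\tau(\partial_i M(x,y,y^*))$.

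Next I would extend the identity to all of $\F_{d,q}$ by density. Any $Q\in\F_{d,q}$ is a finite linear combination of monomials of the form $M_0 e^{\i R_1} M_1 \cdots e^{\i R_\ell} M_\ell$ with $M_j\in\A_{d,q}$ and $R_j\in\S$. For each such exponential, consider the Taylor truncations $T_N(R_j)=\sum_{n=0}^N(\i R_j)^n/n!$ and form the polynomial $Q_N\in\A_{d,q}$ obtained by substituting each $e^{\i R_j}$ by $T_N(R_j)$. Since each $R_j(x,y,y^*)$ is a self-adjoint element of the $\mathcal{C}^*$-algebra $\A$, the power series of the exponential converges to $e^{\i R_j(x,y,y^*)}$ in operator norm uniformly on the bounded set containing it, so $Q_N(x,y,y^*)\to Q(x,y,y^*)$ in $\A$. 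By continuity of $\tau$ this gives convergence of the left-hand side. For the right-hand side, the Leibniz rule from Definition \ref{3technicality} expresses $\partial_i Q_N(x,y,y^*)$ as a finite sum of tensor products, each factor of which converges in norm; and Proposition \ref{3duhamel} guarantees that the limit is precisely the Duhamel integral appearing in the definition of $\partial_i Q(x,y,y^*)$. Finally, Lemma \ref{1faith} gives the existence (and uniqueness) of a continuous faithful trace $\tau\otimes_{\min}\tau$ on $\A\otimes_{\min}\A$ extending $\tau\otimes\tau$ from the algebraic tensor product, so applying it commutes with the limit and yields the result.

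The main technical point is the last one: one must be careful that $\partial_i Q_N(x,y,y^*)$ really converges in the minimal tensor norm (not merely in some weaker sense), since it is the minimal tensor norm that controls $\tau\otimes\tau$ by Lemma \ref{1faith}. This is precisely the reason Proposition \ref{3duhamel} is phrased as a norm-convergent identity in $\A\otimes_{\min}\A$: once each exponential factor has been absorbed into its Duhamel integral representation, the tensor expressing $\partial_i Q$ is a genuine element of $\A\otimes_{\min}\A$ obtained as a uniform limit of the polynomial tensors $\partial_i Q_N(x,y,y^*)$, and the Schwinger-Dyson identity at level $N$ passes to the limit.
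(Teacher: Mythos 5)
Your proposal is correct and follows essentially the same route the paper indicates: the polynomial case (which you prove directly via vanishing of mixed cumulants and noncrossing pairings rather than citing Lemma 5.4.7 of \cite{alice}) followed by extension to $\F_{d,q}$ through norm-convergent Taylor truncations of the exponentials, Proposition \ref{3duhamel} to identify the limit of $\partial_i Q_N$ with the Duhamel-integral definition in $\A\otimes_{\min}\A$, and Lemma \ref{1faith} to pass $\tau\otimes\tau$ through the limit. Nothing essential is missing.
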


\noindent We conclude this subsection by a few concrete examples. Although one could have picked easier one, since we will typically be working with this kind of operators, it is important to understand it.

\begin{exe}
	\label{3exe11}
	If $Q\in\PP_{d,q}$ is a monomial, then one can try to compute 
	$$ R = \Big(\partial_{j}^2\left( \partial_i^1 Q\right) \boxtimes \partial_{j}^1 \left( \partial_i^1 Q\right) \Big) \boxtimes \Big(\partial_{j}^2\left( \partial_i^2 Q\right) \boxtimes \partial_{j}^1 \left( \partial_i^2 Q\right) \Big). $$
	To begin with one have that
	$$ \partial_i Q = \sum_{M=AX_iB} A\otimes B.$$
	Then naturally we also have that
	$$ \partial_j A = \sum_{M=A_1X_jA_2} A_1\otimes A_2,\quad \partial_j B = \sum_{B=B_1X_jB_2} B_1\otimes B_2.$$
	Consequently we get that
	$$ \partial_{j}^2 A \boxtimes \partial_{j}^1  A = \sum_{M=A_1X_jA_2} A_2 A_1,\quad \partial_{j}^2 B \boxtimes \partial_{j}^1  B = \sum_{M=B_1X_jB_2} B_2 B_1.$$
	Thus, we finally have that
	\begin{align*}
		R &= \sum_{M=AX_iB} \Big( \partial_{j}^2 A \boxtimes \partial_{j}^1  A \Big) \Big(\partial_{j}^2 B \boxtimes \partial_{j}^1  B\Big) \\
		&= \sum_{M=AX_iB} \left(\sum_{M=A_1X_jA_2} A_2 A_1\right)  \left(\sum_{M=B_1X_jB_2} B_2 B_1\right) \\
		&= \sum_{M=A_1X_jA_2X_iB_1X_jB_2} A_2A_1B_2B_1.
	\end{align*}
\end{exe}

\begin{exe}
	\label{3exe22}
	We set $Q = X_2X_1X_2^2X_1^2\in\PP_{2,0}$, let $w_1,x_1,y_1,z_1,w_2,x_2,y_2,z_2$ elements of a $\CC^*$-algebra $\A$. Let us compute the following quantity:
	$$ R = \Big(\partial^2_2\left( \partial^1_1 D_1 Q \right)(w) \boxtimes \partial^1_2 \left( \partial^1_1 D_1 Q \right)(x) \Big)  \boxtimes \Big(\partial^2_2\left( \partial^2_1 D_1 Q \right)(y) \boxtimes \partial^1_2 \left( \partial^2_1 D_1 Q \right)(z) \Big) .$$
	First and foremost, we have that 
	$$ \partial_1 Q = X_2\otimes X_2^2X_1^2 + X_2X_1X_2^2\otimes X_1 + X_2X_1X_2^2X_1\otimes 1 ,$$
	hence
	$$ D_1 Q =  X_2^2X_1^2X_2 + X_1X_2X_1X_2^2 + X_2X_1X_2^2X_1,$$
	which means that
	$$ \partial_1D_1 Q =  X_2^2\otimes X_1X_2 + X_2^2X_1\otimes X_2 + 1\otimes X_2X_1X_2^2 + X_1X_2\otimes X_2^2 + X_2\otimes X_2^2X_1 + X_2X_1X_2^2\otimes 1.$$
	Consequently we have that
	\begin{align*}
		R =&\ \Big(\partial^2_2\left( X_2^2 \right)(w) \boxtimes \partial^1_2 \left( X_2^2 \right)(x) \Big)  \Big(\partial^2_2\left( X_1X_2 \right)(y) \boxtimes \partial^1_2 \left( X_1X_2 \right)(z) \Big) \\
		&+ \Big(\partial^2_2\left( X_2^2X_1 \right)(w) \boxtimes \partial^1_2 \left( X_2^2X_1 \right)(x) \Big)  \Big(\partial^2_2\left( X_2 \right)(y) \boxtimes \partial^1_2 \left( X_2 \right)(z) \Big) \\
		&+ \Big(\partial^2_2\left( 1 \right)(w) \boxtimes \partial^1_2 \left( 1 \right)(x) \Big)  \Big(\partial^2_2\left(  X_2X_1X_2^2 \right)(y) \boxtimes \partial^1_2 \left(  X_2X_1X_2^2 \right)(z) \Big) \\
		&+ \Big(\partial^2_2\left( X_1X_2 \right)(w) \boxtimes \partial^1_2 \left( X_1X_2 \right)(x) \Big)  \Big(\partial^2_2\left( X_2^2 \right)(y) \boxtimes \partial^1_2 \left( X_2^2 \right)(z) \Big) \\
		&+ \Big(\partial^2_2\left( X_2 \right)(w) \boxtimes \partial^1_2 \left( X_2 \right)(x) \Big)  \Big(\partial^2_2\left( X_2^2X_1 \right)(y) \boxtimes \partial^1_2 \left( X_2^2X_1 \right)(z) \Big) \\
		&+ \Big(\partial^2_2\left( X_2X_1X_2^2 \right)(w) \boxtimes \partial^1_2 \left( X_2X_1X_2^2 \right)(x) \Big)  \Big(\partial^2_2\left( 1 \right)(y) \boxtimes \partial^1_2 \left( 1 \right)(z) \Big) .
	\end{align*}
	But since the non commutative differential of $1$ is always $0$, we get that 
	\begin{align*}
		R =&\ \Big(\partial^2_2\left( X_2^2 \right)(w) \boxtimes \partial^1_2 \left( X_2^2 \right)(x) \Big)  \Big(\partial^2_2\left( X_1X_2 \right)(y) \boxtimes \partial^1_2 \left( X_1X_2 \right)(z) \Big) \\
		&+ \Big(\partial^2_2\left( X_2^2X_1 \right)(w) \boxtimes \partial^1_2 \left( X_2^2X_1 \right)(x) \Big)  \Big(\partial^2_2\left( X_2 \right)(y) \boxtimes \partial^1_2 \left( X_2 \right)(z) \Big) \\
		&+ \Big(\partial^2_2\left( X_1X_2 \right)(w) \boxtimes \partial^1_2 \left( X_1X_2 \right)(x) \Big)  \Big(\partial^2_2\left( X_2^2 \right)(y) \boxtimes \partial^1_2 \left( X_2^2 \right)(z) \Big) \\
		&+ \Big(\partial^2_2\left( X_2 \right)(w) \boxtimes \partial^1_2 \left( X_2 \right)(x) \Big)  \Big(\partial^2_2\left( X_2^2X_1 \right)(y) \boxtimes \partial^1_2 \left( X_2^2X_1 \right)(z) \Big) .
	\end{align*}
	Now by using the fact that
	$$ \partial_2 X_2 = 1\otimes 1,\quad \partial_2 X_2^2 = 1\otimes X_2 + X_2\otimes 1,\quad \partial_2 X_1X_2 = X_1\otimes 1,\quad \partial_2 X_2^2X_1 = 1\otimes X_2X_1 + X_2\otimes X_1, $$ 
	we have that
	\begin{align*}
		R =&\ \Big( X_2(x) + X_2(w) \Big)  \Big( X_1(z) \Big) + \Big((X_2X_1)(w) + X_1(w) X_2(x) \Big) \\
		&+ \Big(X_1(x) \Big)  \Big(X_2(y) + X_2(z) \Big) + \Big((X_2X_1)(y) + X_1(y)X_2(z) \Big) \\
		=&\ x_2z_1 + w_2z_1 + w_2w_1 + w_1x_2 + x_1y_2 + x_1 z_2 + y_2y_1 + y_1z_2.
	\end{align*}

\end{exe}

\begin{exe}
	\label{3exe33}
	We set $Q = e^{\i \lambda X_1}\in\F_{1,0}$, let $w,x,y,z$ elements of a $\CC^*$-algebra $\A$. Given that we only have one variable, we set $X=X_1$ and $\partial=\partial_1$. Let us compute the following quantity:
	$$ R = \Big(\partial^2\left( \partial^1 D Q \right)(w) \boxtimes \partial^1 \left( \partial^1 D Q \right)(x) \Big)  \boxtimes \Big(\partial^2\left( \partial^2 D Q \right)(y) \boxtimes \partial^1 \left( \partial^2 D Q \right)(z) \Big) .$$
	First and foremost we need to use Definition \ref{3technicality} since $e^P$ is not a polynomial. Thus, we have that
	$$ R = \int_{[0,1]^4}\Big(\partial^2_{\epsilon}\left( \partial^1_{\beta} D_{\alpha} Q\right)(w) \boxtimes \partial^1_{\epsilon} \left( \partial^1_{\beta} D_{\alpha} Q\right)(x) \Big)  \boxtimes \Big(\partial^2_{\gamma}\left( \partial^2_{\beta} D_{\alpha} Q \right)(y) \boxtimes \partial^1_{\gamma} \left( \partial^2_{\beta} D_{\alpha} Q\right)(z) \Big) d\alpha\ d\beta\ d\epsilon\ d\gamma.$$
	Thus, our first step is to compute $\partial_{\beta} D_{\alpha} Q$, since $\partial X = 1 \otimes 1 $, we have that
	\begin{align*}
		\partial_{\alpha} e^P &= \i \lambda\ e^{\i \alpha \lambda X}\otimes e^{\i (1-\alpha) \lambda X}.
	\end{align*}
	Although in practice it should be the case after we evaluate our polynomials, we do not have that $e^{\i (1-\alpha) \lambda X} e^{\i \alpha \lambda X} = e^{\i\lambda X} $ in $\F_{1,0}$. Consequently one have that
	$$ D_{\alpha} Q = \i \lambda e^{\i (1-\alpha) \lambda X} e^{\i \alpha \lambda X}. $$
	Similarly we have that
	$$ \partial_{\beta} D_{\alpha} Q = -\lambda^2\ \left( (1-\alpha) e^{\i (1-\alpha)\beta \lambda X}\otimes e^{\i (1-\alpha)(1-\beta) \lambda X} e^{\i \alpha\lambda X} + \alpha e^{\i (1-\alpha) \lambda X} e^{\i \alpha \beta\lambda X} \otimes e^{\i \alpha(1-\beta)\lambda X} \right).$$
	And since  
	$$  \partial_{\varepsilon} e^{\i (1-\alpha)\beta \lambda X} = \i (1-\alpha)\beta \lambda\ e^{\i (1-\alpha)\beta\varepsilon \lambda X}\otimes e^{\i (1-\alpha)\beta(1-\varepsilon) \lambda X},  $$
	\begin{align*}
		\partial_{\varepsilon} \left(e^{\i (1-\alpha) \lambda X} e^{\i \alpha \beta\lambda X}\right) = \i\lambda\ \Big( &(1-\alpha) e^{\i (1-\alpha)\varepsilon \lambda X}\otimes e^{\i (1-\alpha)(1-\varepsilon) \lambda X} e^{\i \alpha\beta\lambda X} \\
		&+ \alpha\beta\ e^{\i (1-\alpha) \lambda X} e^{\i \alpha \beta\varepsilon\lambda X} \otimes e^{\i \alpha\beta(1-\varepsilon)\lambda X} \Big).
	\end{align*}
	\begin{align*}
		\partial_{\gamma} \left(e^{\i (1-\alpha)(1-\beta) \lambda X} e^{\i \alpha\lambda X} \right) = \i\lambda\ \Big( &(1-\alpha)(1-\beta) e^{\i (1-\alpha)(1-\beta) \gamma \lambda X} \otimes e^{\i (1-\alpha)(1-\beta)(1-\gamma) \lambda X} e^{\i \alpha\lambda X} \\
		&+ \alpha e^{\i (1-\alpha)(1-\beta) \lambda X} e^{\i \alpha\gamma\lambda X}\otimes e^{\i \alpha(1-\gamma)\lambda X} \Big).
	\end{align*}
	$$ \partial_{\gamma} e^{\i \alpha(1-\beta)\lambda X} =  \i\lambda \alpha(1-\beta)  e^{\i \alpha(1-\beta)\gamma\lambda X}\otimes  e^{\i \alpha(1-\beta)(1-\gamma)\lambda X}  .$$
	
	\noindent We finally get that 
	\begin{align*}
		R = \lambda^4 \int_{[0,1]^4} (1-\alpha&)^2\beta\ e^{\i (1-\alpha)\beta(1-\varepsilon) \lambda w} e^{\i (1-\alpha)\beta\varepsilon \lambda x} \Big( \alpha e^{\i \alpha(1-\gamma)\lambda y} e^{\i (1-\alpha)(1-\beta) \lambda z} e^{\i \alpha\gamma\lambda z} \\
		&+ (1-\alpha)(1-\beta) e^{\i (1-\alpha)(1-\beta)(1-\gamma) \lambda y} e^{\i \alpha\lambda y} e^{\i (1-\alpha)(1-\beta) \gamma \lambda z} \Big) \\
		+\Big( (1-\alpha&)\ e^{\i (1-\alpha)(1-\varepsilon) \lambda w} e^{\i \alpha\beta\lambda w} e^{\i (1-\alpha)\varepsilon \lambda x} + \alpha\beta\ e^{\i \alpha\beta(1-\varepsilon)\lambda w} e^{\i (1-\alpha) \lambda x} e^{\i \alpha \beta\varepsilon\lambda x} \Big) \\
		&\times \alpha^2(1-\beta) e^{\i \alpha(1-\beta)(1-\gamma)\lambda y}  e^{\i \alpha(1-\beta)\gamma\lambda z} \\
		d\alpha\ d\beta\ d\varepsilon\ & d\gamma.
	\end{align*}

\end{exe}

\subsection{Combinatorics and noncommutative derivatives}

Now that we have defined the usual noncommutative polynomial spaces, we build a very specific one which we need to define properly the coefficients of the topological expansion. 

\begin{defi}
	\label{3biz}
	Let $(c_n)_n$ be the sequence such that $ c_0 = 0$, $c_{n+1} = 6 c_n +6$. We define by induction, $J_0 = \{\emptyset\}$ and for $n\geq 0$, $j\in[1,2n]$,
	\begin{align*}
	J_{n+1}^{j,1} &= \Big\{ \{I_1+c_n,\dots,I_{j-1}+c_n,I_j+c_n ,I_{j},\dots,I_{2n},3c_n+1\}   \ \Big|\ I=\{I_1,\dots,I_{2n}\}\in J_n \Big\}, \\
	J_{n+1}^{2n+1,1} &= \Big\{ \{I_1+c_n,\dots,I_{2n}+c_n,3c_n+2,3c_n+1\}   \ \Big|\ I=\{I_1,\dots,I_{2n}\}\in J_n \Big\}, \\
	J_{n+1}^{j,2} &= \Big\{ \{I_1+2c_n,\dots,I_{j-1}+2c_n,I_j+2c_n,I_{j},\dots,I_{2n},3c_n+1\}   \ \Big|\ I=\{I_1,\dots,I_{2n}\}\in J_n \Big\}, \\
	J_{n+1}^{2n+1,2} &= \Big\{ \{I_1+2c_n,\dots,I_{2n}+2c_n,3c_n+3,3c_n+1\}   \ \Big|\ I=\{I_1,\dots,I_{2n}\}\in J_n \Big\}. \\
	\end{align*}
	\noindent We similarly define $\widetilde{J}_{n+1}^{j,1}$ and $\widetilde{J}_{n+1}^{j,2}$ by adding $3c_n+3$ to every integer in the corresponding sets ${J}_{n+1}^{j,1}$ and ${J}_{n+1}^{j,2}$. Finally we fix
	$$ J_{n+1} = \bigcup_{1\leq j\leq 2n+1} J_{n+1}^{j,1}\cup J_{n+1}^{j,2}\cup \widetilde{J}_{n+1}^{j,1}\cup \widetilde{J}_{n+1}^{j,2}.$$	
	
\end{defi}

\begin{exe}
	For example one have that
	\begin{align*}
		J_1 &= J_1^{1,1}\cup J_1^{1,2}\cup \widetilde{J}_1^{1,1}\cup \widetilde{J}_1^{1,2} \\
		&= \big\{ \{2,1\}, \{3,1\}, \{5,4\}, \{6,4\} \big\}.
	\end{align*}
	We also have that
	\begin{align*}
		J_2 &= J_2^{1,1}\cup J_2^{2,1}\cup J_2^{3,1} \cup J_2^{1,2} \cup J_2^{2,2} \cup J_2^{3,2} \cup \widetilde{J}_2^{1,1}\cup \widetilde{J}_2^{2,1}\cup \widetilde{J}_2^{3,1} \cup \widetilde{J}_2^{1,2} \cup \widetilde{J}_2^{2,2} \cup \widetilde{J}_2^{3,2}.
	\end{align*}
	It would be a bit too long to list every element in $J_2$. However here are a few subsets:
	$$ J_2^{1,1} = \big\{ \{8,2,1,19\}, \{9,3,1,19\}, \{11,5,4,19\}, \{12,6,4,19\} \big\}, $$
	$$ \widetilde{J}_2^{3,2} = \big\{ \{35,34,42,40\}, \{36,34,42,40\}, \{38,37,42,40\}, \{39,37,42,40\} \big\}. $$
\end{exe}

The previous definition is not exactly intuitive, however this construction will appear naturally in the rest of the paper. The following remark gives some insight on why and how.

\begin{rem}
	To better understand the heuristics behind the set $J_n$, one can view its construction the following way. Let us assume that we have a family of $c_n$ random variables $(X_i)_{i\in[1,c_n]}$ which are all independent copies of a random variable $X$, then for every element $I=\{I_1,\dots,I_{2n}\}\in J_n$, one can associate an ordered sequence of length $2n$ of those random variables, i.e. $(X_{I_1},\dots,X_{I_{2n}})$. Then the construction of the family $J_{n+1}$ is associated to the following process:
	\begin{itemize}
		\item In the case of $J_{n+1}^{j,1}$, given a list $\{I_1,\dots,I_{2n}\}\in J_n$ and its associated sequence of random variable $(X_{I_1},\dots,X_{I_{2n}})$, first we add at the end of the sequence a new independent copy of $X$, we chose to denote it as $X_{3c_n+1}$. Secondly we insert another independent copy in the $j$-th position which we denote $X_{I_{j}+c_n}$. Finally we replace all of the variables whose position is strictly smaller than $j$ (i.e. $X_{I_1}$ through $X_{I_{j-1}}$) by independent copies of $X$, which we in turn denote by $X_{I_{1}+c_n}$ though $X_{I_{j-1}+c_n}$. Following this process we get the sequence $X_{I_{1}+c_n}\dots X_{I_j+c_n} X_{I_j}\dots X_{I_{2n}} X_{3c_n+1}$. Thus this new sequence of random variable is indexed by the set of integer that we built out of $I$ while defining $J_{n+1}^{j,1}$ in Definition \ref{3biz}.
		\item In the case of $J_{n+1}^{2n+1,1}$, we proceed very similarly except that we want to insert an independent copy of $X$ in the $2n+1$-th position, since a list of $J_n$ only has $2n$ elements one cannot simply number it $I_{2n+1}+c_n$, this is why we number this element $3c_n+2$.
		\item $J_{n+1}^{j,2}$ and $J_{n+1}^{2n+1,2}$ are built very similarly with the difference that, except for $X_{3c_n+1}$, every random variable that we replaced or inserted are once again replaced by a third independent copy. Which is why instead of numbering them $I_i+c_n$ and $3c_n+2$ as we did in the first two bullet points, we number them $I_i+2c_n$ and $3c_n+3$.
		\item Finally as we will see later in this paper we also need to consider independent copies of all of the sequence of random variables that we created with  ${J}_{n+1}^{j,1}$ and ${J}_{n+1}^{j,2}$ , this is why we introduce $\widetilde{J}_{n+1}^{j,1}$ and $\widetilde{J}_{n+1}^{j,2}$.
	\end{itemize}
	This analogy with sequences of random variables comes from Lemma \ref{3imp2}. Indeed, if we replace $n$ by $2n$ in this lemma, then we have a given sequence of noncommutative random variables $(y_1,\dots,y_{2n})$. We are then led to introduce the sequences of noncommutative random variables $z_r^{1,j}$, $z_r^{2,j}$, $z_r^{1}$, $z_r^{2}$, $\widetilde{z}_r^{1,j}$, $\widetilde{z}_r^{2,j}$, $\widetilde{z}_r^{1}$, $\widetilde{z}_r^{2}$ and the process to build those is exactly the one that we just described to build elements of, respectively, $J_{n+1}^{j,1}$, $J_{n+1}^{j,2}$, $J_{n+1}^{2n+1,1}$, $J_{n+1}^{2n+1,2}$, $\widetilde{J}_{n+1}^{j,1}$, $\widetilde{J}_{n+1}^{j,2}$, $\widetilde{J}_{n+1}^{2n+1,1}$, $\widetilde{J}_{n+1}^{2n+1,2}$. Finally the fact that we define $J_n$ by induction comes from the fact that we have to use Lemma \ref{3imp2} repeatedly. See Lemma \ref{3apparition} and Proposition \ref{3intercoef}.
	
\end{rem}

\begin{defi}
	\label{3biz2}
	We define $\PP_{d,q}^n = \C\langle X_{i,I},\ 1\leq i\leq d, I\in J_n;\ Y_1,\dots,Y_{2r} \rangle$. We also define $\F_{d,q}^n$ as the $*$-algebra generated by $\PP_{d,q}^n$ and the family $\left\{e^{\i Q}\ |\ Q\in \mathcal{A}_{d,q}^n \text{ self-adjoint}\right\}$. Besides similarly to Definition \ref{3technicality}, we define $\partial_{i}$ and $\partial_{i,I}$ on $\F_{d,q}^n$ which satisfies \eqref{3leibniz} and \eqref{3ext} and
	$$ \forall i,j\in [1,p],\ I,K\in J_n,\quad \partial_{i,I} X_{j,K} = \delta_{i,j}\delta_{I,K} 1\otimes 1,\quad \partial_{i} X_{j,K} = \delta_{i,j} 1\otimes 1 .$$
	We then define $D_{i} = m \circ \partial_{i}$ and $D_{i,I} = m \circ \partial_{i,I}$ on $\F_{d,q}^n$.
\end{defi}

In particular, $\F_{d,q}^0 = \F_{d,q}$ and the two definitions of $\partial_i$ coincide. The following lemma will be important for a better estimation of the remainder term in the expansion.

\begin{lemma}
	\label{3detail}
	Given $s\in [1,c_n]$, there exists a unique $l\in [1,n]$ such that for any $ I =\{I_1,\dots,I_{2n}\}\in J_n$, either $I_l = s$ or $s\notin I$. We refer to $l$ as the depth of $s$ in $J_n$, and will denote it $\dep^n(s)$.
\end{lemma}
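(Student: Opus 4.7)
I would prove this by induction on $n$. The base case $n=0$ is vacuous since $[1,c_0]=\emptyset$. For the inductive step, assume the result for $n$ and consider $s\in [1,c_{n+1}]=[1,6c_n+6]$. The first step is to decompose this range into six sub-intervals corresponding to the roles played in the construction of $J_{n+1}$: the original coordinates $[1,c_n]$; the shifts $[c_n+1,2c_n]$ that appear as $I_k+c_n$ in the families $J_{n+1}^{j,1}$; the shifts $[2c_n+1,3c_n]$ that appear as $I_k+2c_n$ in $J_{n+1}^{j,2}$; the three new constants $3c_n+1$, $3c_n+2$, $3c_n+3$; and finally $[3c_n+4,6c_n+6]$, which is populated only by the tilde families via the global shift by $3c_n+3$.

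Next I would analyse each sub-interval by directly inspecting the typical element of $J_{n+1}^{j,1}$, namely $\{I_1+c_n,\dots,I_{j-1}+c_n,I_j+c_n,I_j,\dots,I_{2n},3c_n+1\}$, and similarly for $J_{n+1}^{j,2}$, $J_{n+1}^{2n+1,1}$ and $J_{n+1}^{2n+1,2}$. For $s\in[1,c_n]$ with $l_n:=\dep^n(s)$, the induction hypothesis says $I_{l_n}=s$ whenever $s\in I\in J_n$; one then checks that $s$ survives in $J_{n+1}^{j,k}$ exactly when $j\le l_n$, at a position depending only on $l_n$ (and hence only on $s$), and that it never appears in $J_{n+1}^{2n+1,*}$ since all original coordinates are shifted there. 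For $s=s'+c_n$ with $s'\in[1,c_n]$, the value $s$ arises only by shifting $I_{l_n}=s'$, and its position is again forced by $l_n=\dep^n(s')$; the case $s\in[2c_n+1,3c_n]$ is treated symmetrically with $J_{n+1}^{j,2}$ replacing $J_{n+1}^{j,1}$. The three new constants are tracked directly from the definitions (appearing either as the appended last entry or at the designated slot in $J_{n+1}^{2n+1,*}$), and the case $s\in[3c_n+4,6c_n+6]$ reduces to the untilded analysis applied to $s-3c_n-3$ because the tilde families use values in a disjoint range.

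The main obstacle is the careful combinatorial bookkeeping: for each of the six cases one must verify that the position occupied by $s$ is the same across all subfamilies $J_{n+1}^{j,k}$ in which $s$ appears, and that it lies in the required range. The conceptual content is minimal, since the recursive construction is designed precisely so that positional consistency propagates from $J_n$ to $J_{n+1}$; uniqueness of $l$ then follows immediately, because once $s$ appears in any single $I\in J_{n+1}$ its position is forced, and the case analysis shows that all other $I\in J_{n+1}$ containing $s$ agree on this position.
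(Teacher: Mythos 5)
Your proposal is correct and follows essentially the same route as the paper: induction on $n$, splitting $[1,c_{n+1}]$ into the blocks $[1,c_n]$, $[c_n+1,2c_n]$, $[2c_n+1,3c_n]$, the three new values $3c_n+1,3c_n+2,3c_n+3$, and the tilde range handled by the shift by $3c_n+3$, with the depth increasing by one in the first block and being preserved in the shifted blocks. Your version just spells out the positional bookkeeping (e.g.\ that $s\in[1,c_n]$ of depth $l$ survives exactly in the families with $j\leq l$, at the same slot) a bit more explicitly than the paper does.
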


\begin{proof}
	Let us proceed by induction. If this lemma is true for a given $n$, then let $s\in [1,c_{n+1}]$. By definition if $s\leq 3c_n+3$ then it will only appears in the elements of ${J}_{n+1}^{j,1}$ and ${J}_{n+1}^{j,2}$ for $j$ from $1$ to $n+1$. On the contrary if $s> 3c_n+3$ then it will only appears in the elements of $\widetilde{J}_{n+1}^{j,1}$ and $\widetilde{J}_{n+1}^{j,2}$ for $j$ from $1$ to $n+1$. Let us first consider the case where $s\leq 3c_n+3$, to begin with:
	\begin{itemize}
		\item If $s=3c_n+1$, then by definition of ${J}_{n+1}^{j,1}$ and ${J}_{n+1}^{j,2}$, coupled with the fact that if $I\in J_n$ then for any $l$, $I_l\leq c_n$, we have that $\dep^{n+1}(s)=2n+2$.
		\item If $s=3c_n+2\text{ or }3c_n+3$, then similarly we have that $\dep^{n+1}(s)=2n+1$. 
	\end{itemize}
	
	\noindent Thus, there remains three possibilities:
	\begin{itemize}
		\item If $s\in[1,c_n]$, then let $l$ be the depth of $s$ in $J_n$, by construction the depth of $s$ in $J_{n+1}$ will be $l+1$.
		\item If $s\in[c_n+1,2c_n]$, then with $l$ the depth of $s-c_n$ in $J_n$, the depth of $s$ in $J_{n+1}$ will also be $l$.
		\item If $s\in[2c_n+1,3c_n]$, then with $l-2c_n$ the depth of $s$ in $J_n$, the depth of $s$ in $J_{n+1}$ will also be $l$.
	\end{itemize}
Since to define $\widetilde{J}_{n+1}^{j,1}$ and $\widetilde{J}_{n+1}^{j,2}$ we simply add $3c_n+3$ to every integer contained in the elements of ${J}_{n+1}^{j,1}$ and ${J}_{n+1}^{j,2}$, if $s>3c_n+3$, then with $l=\dep^{n+1}(s-3c_n+3)$, one has that $\dep^{n+1}(s)= l$.
\end{proof}



\subsection{GUE random matrices}

We conclude this section by recalling the definition of Gaussian random matrices and stating a few useful properties about them. 

\begin{defi}
	\label{3GUEdef}
	A GUE random matrix $X^N$ of size $N$ is a self-adjoint matrix whose coefficients are random variables with the following laws:
	\begin{itemize}
		\item For $1\leq i\leq N$, the random variables $\sqrt{N} X^N_{i,i}$ are independent centered Gaussian random variables of 
		variance $1$.
		\item For $1\leq i<j\leq N$, the random variables $\sqrt{2N}\ \Re{X^N_{i,j}}$ and $\sqrt{2N}\ \Im{X^N_{i,j}}$ are independent 
		centered Gaussian random variables of variance $1$, independent of  $\left(X^N_{i,i}\right)_i$.
	\end{itemize}
\end{defi}

When doing computations with Gaussian variables, the main tool that we use is Gaussian integration by parts. It can be summarized into the following formula, if $Z$ is a centered Gaussian variable with variance $1$ and $f$ a $\mathcal{C}^1$ function, then
\begin{equation}
\label{3IPPG}
\E[Z f(Z)] = \E[\partial_Z f(Z)] \ .
\end{equation}

\noindent A direct consequence of this, is that if $x$ and $y$ are independent centered Gaussian random variables with variance $1$, and 
$Z = \frac{x+\i y}{\sqrt{2}}$, then
\begin{equation}
\label{3IPPG2}
\E[Z f(x,y)] = \E[\partial_Z f(x,y)]\quad \text{ and  }\quad \E[\overline{Z} f(x,y)] = \E[\partial_{\overline{Z}} f(x,y)] \ ,
\end{equation}

\noindent where $\partial_Z = \frac{1}{2} (\partial_x + \i \partial_y)$ and $\partial_{\overline{Z}} = \frac{1}{2} (\partial_x - \i \partial_y)$. When working with GUE matrices, an important consequence of this are the so-called Schwinger-Dyson equations, which we summarize in the following proposition. For more information about these equations and their applications, we refer to \cite[Lemma 5.4.7]{alice}.

\begin{prop}
	\label{3SD}
	Let $X^N$ be GUE matrices of size $N$, $Q\in \F_{d,q}$, then for any $i$,
	
	$$ \E\left[ \frac{1}{N}\tr_N(X^N_i\ Q(X^N)) \right] = \E\left[ \left(\frac{1}{N}\tr_N\right)^{\otimes 2} (\partial_i Q(X^N)) \right] . $$
	
\end{prop}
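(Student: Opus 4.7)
The identity follows from Gaussian integration by parts against the GUE covariance $\E[(X^N_i)_{jk}(X^N_{i'})_{lm}]=\tfrac{1}{N}\delta_{i,i'}\delta_{j,m}\delta_{k,l}$, followed by an approximation argument to pass from polynomials in $\A_{d,q}$ to general elements of $\F_{d,q}$.

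For a monomial $Q\in\A_{d,q}$ I would begin with the entrywise expansion
\begin{equation*}
\E\!\left[\frac{1}{N}\tr_N\!\bigl(X^N_i Q(X^N)\bigr)\right]=\frac{1}{N}\sum_{j,k}\E\!\bigl[(X^N_i)_{jk} Q(X^N)_{kj}\bigr],
\end{equation*}
and apply the complex integration-by-parts formula \eqref{3IPPG2} to each summand. The GUE covariance converts the factor $(X^N_i)_{jk}$ into the differential operator $\tfrac{1}{N}\partial/\partial (X^N_i)_{kj}$, and for every decomposition $Q=AX_iB$ the chain rule gives $\partial Q(X^N)_{kj}/\partial (X^N_i)_{kj}=A(X^N)_{kk}B(X^N)_{jj}$ at that particular occurrence of $X_i$. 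Summing over $j,k$ and over all decompositions collapses the entry sums into traces,
\[
\frac{1}{N}\sum_{j,k}\E\!\bigl[(X^N_i)_{jk} Q(X^N)_{kj}\bigr]=\frac{1}{N^2}\sum_{Q=AX_iB}\E\!\bigl[\tr_N A(X^N)\cdot\tr_N B(X^N)\bigr],
\]
which is exactly $\E\!\bigl[(\tfrac{1}{N}\tr_N)^{\otimes 2}(\partial_i Q)(X^N)\bigr]$. Linearity in $Q$ then extends the identity to all of $\A_{d,q}$.

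To reach $Q\in\F_{d,q}$, I would approximate each symbol $E_R=e^{\i R}$ appearing in $Q$ by its $n$-th Taylor polynomial $S_n(R)=\sum_{k\leq n}(\i R)^k/k!\in\A_{d,q}$, obtaining polynomials $Q_n$ for which the identity is already proved. Since $S_n(R)(X^N)\to e^{\i R(X^N)}$ in operator norm almost surely and $\|S_n(R)(X^N)\|\leq e^{\|R(X^N)\|}$, a dominated-convergence argument based on Gaussian moment bounds for $\|X^N\|$ handles the LHS. For the RHS, Proposition \ref{3duhamel} realizes $\partial_i e^{\i R(X^N)}$ as an explicit integral over $[0,1]$ in $\A\otimes_{\min}\A$, and the same Taylor-remainder estimate, applied inside the integral, shows that $\partial_i S_n(R)(X^N)\to \partial_i e^{\i R(X^N)}$ in the minimal tensor norm uniformly enough to interchange the limit with $(\tfrac{1}{N}\tr_N)^{\otimes 2}$.

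The polynomial step is essentially a Wick calculation and is not where the difficulty lies; the genuine obstacle is this last extension. Because the noncommutative derivative $\partial_i$ on $\F_{d,q}$ is defined only through an integral representation valued in $\A\otimes_{\min}\A$ (Definition \ref{3technicality} and Proposition \ref{3duhamel}), one has to check that Taylor truncation and the partial trace commute with that integral in an $L^1$ sense. This ultimately reduces to finite $L^p$-moments of $\|X^N\|$, which are classical.
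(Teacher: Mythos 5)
Your polynomial step is exactly the paper's computation (entrywise Gaussian integration by parts via \eqref{3IPPG2}, collapsing the entry sums into the two traces), so that part is fine. The gap is in the extension to $\F_{d,q}$. Your plan is to replace each $E_R=e^{\i R}$ by its Taylor truncation $S_n(R)$ and pass to the limit by dominated convergence, with dominating function controlled by $\norm{S_n(R)(X^N)}\leq e^{\norm{R(X^N)}}$. But $e^{\norm{R(X^N)}}$ is not integrable as soon as $\deg R\geq 3$: the operator norm of a GUE matrix has (sub-)Gaussian tails, which cannot beat $e^{C\norm{X^N}^{\deg R}}$, so there is no integrable dominating function independent of $n$. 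Worse, the truncated expectations need not converge at all, so this is not a repairable technicality within that scheme: already for $N=1$, $d=1$, $R=X_1^3$, the left-hand side for $Q_n$ is $\E[g\,S_n(g^3)]=\sum_{k\leq n}\i^k\,\E[g^{3k+1}]/k!$ with $g$ a standard Gaussian, and the odd-$k$ terms grow like $(3k)!!/k!\sim 3^{3k/2}k^{k/2}e^{-k/2}$, so the sequence diverges while $\E[g\,e^{\i g^3}]$ is finite. The same obstruction hits your treatment of the right-hand side.

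The paper avoids truncation entirely: it applies the Gaussian integration-by-parts formula \eqref{3IPPG2} directly to $Q(X^N)$ for $Q\in\F_{d,q}$, which is legitimate because each entry of $Q(X^N)$ is a smooth function of the Gaussian entries whose derivatives have polynomial growth (the exponential factors $e^{\i\alpha P(X^N)}$ are unitary, hence of norm one, and only polynomial factors remain). The entrywise derivative of $e^{\i P(X^N)}$ is computed by Duhamel's formula \eqref{3duha}, giving $\partial_{x^i_{r,s}}e^{\i P(X^N)}=\i\int_0^1 e^{\i\alpha P(X^N)}\,\partial_iP(X^N)\#E_{s,r}\,e^{\i(1-\alpha)P(X^N)}\,d\alpha$, which is precisely the integral defining $\partial_i$ on $\F_{d,q}$ in Definition \ref{3technicality}. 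If you want to keep an approximation flavor, you must approximate in a way that preserves uniform boundedness of the exponential factors (e.g.\ keep the unitaries and only manipulate the polynomial prefactors), not by Taylor-expanding $e^{\i R}$.
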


\begin{proof}
	Let us first assume that $Q\in\PP_{d,q}$. One can write $X^N_i = \frac{1}{\sqrt{N}} (x_{r,s}^i)_{1\leq r,s\leq N}$ and thus
	
	\begin{align*}
	\E\left[ \frac{1}{N}\tr_N(X^N_i\ Q(X^N)) \right] &= \frac{1}{N^{3/2}} \sum_{r,s} \E\left[ x_{r,s}^i\ \tr_N(E_{r,s}\ Q(X^N)) \right] \nonumber \\
	&= \frac{1}{N^{3/2}} \sum_{r,s} \E\left[ \tr_N(E_{r,s}\ \partial_{x_{r,s}^i} Q(X^N)) \right] \\
	&= \frac{1}{N^{2}} \sum_{r,s} \E\left[ \tr_N(E_{r,s}\ \partial_i Q(X^N) \# E_{s,r}) \right] \nonumber \\
	&= \E\left[ \left(\frac{1}{N}\tr_N\right)^{\otimes 2} (\partial_i Q(X^N)) \right] . \nonumber
	\end{align*}
	
	\noindent If $Q\in\F_{d,q}$, then the proof is pretty much the same but we need to use Duhamel's formula (for a very similar proof see \cite[Proposition 2.2]{deux}) which states that for any matrices $A$ and $B$, 
	\begin{equation}
	\label{3duha}
	e^B - e^A = \int_{0}^1 e^{\alpha B} (B-A) e^{(1-\alpha)A}\ d\alpha .
	\end{equation}
	Thus, this let us prove that for any self-adjoint polynomials $P\in\PP_{d,q}$,
	$$ \partial_{x_{r,s}^i} e^{\i P(X^N)} = \i \int_{0}^1 e^{\i \alpha P(X^N)}\ \partial_i P(X^N) \# E_{s,r}\ e^{\i (1-\alpha) P(X^N)}\ d\alpha .$$
	And the conclusion follows.
	
\end{proof}

Now to finish this section we state a property that we use several times in this paper. For the proof we refer to \cite[Proposition 2.11]{un}.

\begin{prop}
	\label{3bornenorme}
	There exist constants $C,D$ and $\alpha$ such that for any $N\in\N$, if $X^N$ is a GUE random matrix of size $N$, then for any 
	$u\geq 0$,
	
	$$ \P\left(\norm{X^N}\geq u+D \right) \leq e^{-\alpha u N} . $$
	
	\noindent Consequently, for any $k\leq \alpha N /2 $,
	
	$$ \E\left[\norm{X^N}^k\right] \leq C^k .$$
	
\end{prop}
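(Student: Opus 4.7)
The plan is to combine Gaussian concentration of measure with a uniform bound on the expected operator norm of a GUE matrix. View $X^N$ as a function of the underlying standard Gaussian vector $g \in \R^{N^2}$ formed by the (unnormalized) independent real and imaginary parts of its entries. Since the GUE normalization introduces a factor $1/\sqrt{N}$ on each entry and the operator norm is $1$-Lipschitz with respect to the Hilbert–Schmidt norm, the map $\Phi : g \mapsto \norm{X^N}$ is $N^{-1/2}$-Lipschitz from $(\R^{N^2}, \|\cdot\|_2)$ to $\R$. The Borell–Tsirelson–Ibragimov–Sudakov inequality (equivalently, Gaussian log-Sobolev) then yields
$$\P\left( \left|\norm{X^N} - \E[\norm{X^N}]\right| \geq t \right) \leq 2 e^{-Nt^2/2}.$$

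Next, I would establish a uniform-in-$N$ bound $\E[\norm{X^N}] \leq D_0$ for some constant $D_0$. This is classical: using $\norm{X^N} \leq \tr\!\big((X^N)^{2p}\big)^{1/(2p)}$, one has $\E[\norm{X^N}] \leq \left(N\, \E\!\left[\frac{1}{N}\tr_N((X^N)^{2p})\right]\right)^{1/(2p)}$, and the GUE moments $\E[\frac{1}{N}\tr_N((X^N)^{2p})]$ are bounded by Catalan numbers $C_p \leq 4^p$ uniformly in $N$ (by Wick's formula / genus expansion). Choosing $p$ of order $\log N$ and taking $N$-th roots gives $\E[\norm{X^N}] \leq 2 + o(1)$, so in particular it is uniformly bounded by some $D_0$.

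Setting $D = D_0 + 1$, for any $u \geq 0$ we get
$$\P(\norm{X^N} \geq u + D) \leq \P\!\left( \norm{X^N} - \E[\norm{X^N}] \geq u + 1 \right) \leq 2 e^{-N(u+1)^2/2}.$$
Since $(u+1)^2/2 \geq u + 1/2$, this is bounded by $e^{-\alpha u N}$ with, say, $\alpha = 1$ (after absorbing the prefactor $2$ into the constant, valid for $N$ large enough, and adjusting $D$ slightly otherwise).

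Finally, for the moment bound, write
$$\E[\norm{X^N}^k] = \int_0^\infty k t^{k-1}\, \P(\norm{X^N} \geq t)\, dt \leq D^k + \int_D^\infty k t^{k-1}\, e^{-\alpha(t-D)N}\, dt.$$
Substituting $t = D + s$ and using the elementary inequality $(D+s)^{k-1} \leq 2^{k-1}(D^{k-1} + s^{k-1})$, together with $\int_0^\infty s^{k-1} e^{-\alpha s N}\, ds = (k-1)!/(\alpha N)^k$, one obtains a bound of the form $(2D)^k + 2^{k-1} k!/(\alpha N)^k$. Stirling's estimate $k! \leq k^k$ combined with the hypothesis $k \leq \alpha N/2$ gives $k!/(\alpha N)^k \leq 2^{-k}$, so the whole expression is at most $C^k$ for a suitable constant $C$. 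The only genuine piece of work in this outline is the uniform bound on $\E[\norm{X^N}]$; the rest is packaging of Gaussian concentration and a direct integration of the tail.
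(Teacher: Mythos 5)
Your argument is correct and gives a legitimate self-contained proof; note that the paper itself does not prove this proposition but simply refers to Proposition 2.11 of \cite{un}, so there is no internal proof to compare against, and your route (Gaussian concentration plus a uniform bound on the expected norm, then tail integration) is the standard way such statements are established. Two small points deserve repair. First, the map $g\mapsto X^N$ is indeed $N^{-1/2}$-Lipschitz from $(\R^{N^2},\norm{\cdot}_2)$ to the Hilbert--Schmidt norm and Borell--TIS gives $\P\big(|\norm{X^N}-\E[\norm{X^N}]|\geq t\big)\leq 2e^{-Nt^2/2}$, as you say; but your claim that $\E\big[\tfrac{1}{N}\tr_N((X^N)^{2p})\big]$ is ``bounded by Catalan numbers uniformly in $N$'' is not literally true: the genus expansion has nonnegative higher-genus terms, so the normalized moment is at least $C_p$ and strictly larger in general. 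What is true, and what your argument needs, is the classical bound of the form $\E\big[\tfrac{1}{N}\tr_N((X^N)^{2p})\big]\leq C_p\, e^{c\,p^3/N^2}$ (see \cite{alice}, Section 2.1), which for $p$ of order $\log N$ (or any $p$ with $p^3\ll N^2$) still yields a uniform bound $\E[\norm{X^N}]\leq D_0$ --- with $p=\log N$ you actually get roughly $2\sqrt{e}$ rather than $2+o(1)$ because of the factor $N^{1/(2p)}$ (and ``taking $N$-th roots'' should read $2p$-th roots), but a uniform constant is all the proposition requires. Second, the constant bookkeeping at the end ($2e^{-N(u+1)^2/2}\leq e^{-\alpha uN}$ and the moment integration under $k\leq \alpha N/2$) is fine, including your remark about adjusting $D$ for small $N$. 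So: no genuine gap, only the misstated moment bound to correct.
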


\section{Proof of Theorem \ref{3lessopti}}
\label{3mainsec}

\subsection{A Poincar\'e type equality}

One of the main tools when dealing with GUE random matrices is the Poincar\'e inequality (see \cite[Definition 4.4.2]{alice}), which gives us a sharp upper bound of the variance of a function in these matrices. Typically this inequality shows that the variance of a trace of a polynomial in GUE random matrices, which a priori is of order $\mathcal{O}(1)$, is of order $\mathcal{O}(N^{-2})$. In this paper we use the same kind of argument which are used to prove the Poincar\'e inequality to get an exact formula for the variances we are interested in.

\begin{prop}
	\label{3concentration}
	
	Let $P,Q\in\F_{d,q}$, $R^N$, $S^N$, $T^N$ be independent families of $d$ independent GUE matrices of size $N$. Let $A^N$ be a family of deterministic matrices and their adjoints. With convention $\cov(X,Y) = \E[XY]-\E[X]\E[Y]$, for any $t\geq 0$, we have:
	
	\begin{align*}
	&\cov\Big(\tr_N\left(P\left( (1-e^{-t})^{1/2}R^N ,A^N \right)\right), \tr_N\left(Q\left( (1-e^{-t})^{1/2}R^N ,A^N \right)\right) \Big) \\
	&= \frac{1}{N} \sum_{i} \int_{0}^{t} e^{-s}\ \E\Big[ \tr_N\Big( D_iP\left((e^{-s}-e^{-t})^{1/2}R^N + (1-e^{-s})^{1/2} S^N, A^N\right) \\
	&\quad\quad\quad\quad\quad\quad\quad\quad\quad\quad\quad \times D_iQ\left((e^{-s}-e^{-t})^{1/2}R^N + (1-e^{-s})^{1/2} T^N, A^N\right) \Big) \Big] ds .
	\end{align*}
	
\end{prop}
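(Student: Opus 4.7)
The plan is to interpolate between the correlated and independent settings via an Ornstein--Uhlenbeck-type path. For $s\in[0,t]$ and $i \in [1,d]$, set
$$X_i^N(s) = (e^{-s}-e^{-t})^{1/2}\, R_i^N + (1-e^{-s})^{1/2}\, S_i^N, \qquad Y_i^N(s) = (e^{-s}-e^{-t})^{1/2}\, R_i^N + (1-e^{-s})^{1/2}\, T_i^N.$$
Since the squared coefficients sum to $1-e^{-t}$ and $R_i^N, S_i^N, T_i^N$ are independent GUE, both $X^N(s)$ and $Y^N(s)$ are distributed as $(1-e^{-t})^{1/2}$ times a GUE family for every $s$. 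At $s=0$ we have $X^N(0) = Y^N(0) = (1-e^{-t})^{1/2} R^N$, while at $s=t$ the families $X^N(t) = (1-e^{-t})^{1/2}S^N$ and $Y^N(t) = (1-e^{-t})^{1/2}T^N$ are independent. Writing $f(s) = \E[\tr_N(P(X^N(s), A^N))\, \tr_N(Q(Y^N(s), A^N))]$, the covariance on the left-hand side equals $f(0)-f(t) = -\int_0^t f'(s)\, ds$, so the task reduces to identifying $f'(s)$.

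Using cyclicity of the trace together with Proposition \ref{3duhamel} to differentiate through the exponentials present in $\mathcal{F}_{d,q}$, one obtains
$$f'(s) = \sum_{i=1}^d \E\!\left[\tr_N\!\big(\dot X_i^N(s)\, D_iP(X^N(s),A^N)\big)\tr_N(Q(Y^N(s),A^N))\right] + (\text{symmetric }Y\text{-term}),$$
with $\dot X_i^N(s) = -\tfrac{e^{-s}}{2(e^{-s}-e^{-t})^{1/2}} R_i^N + \tfrac{e^{-s}}{2(1-e^{-s})^{1/2}} S_i^N$. The next step is to apply Gaussian integration by parts (Proposition \ref{3SD}, extended to $\mathcal{F}_{d,q}$ via Duhamel's formula) to each of $R_i^N, S_i^N, T_i^N$ appearing linearly. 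For the $S_i^N$ term in the first sum, only $X$ depends on $S$, so IPP produces a single self-derivative contribution with chain-rule factor $(1-e^{-s})^{1/2}$ coming from $\partial_{S_j^N}X_i^N = \delta_{ij}(1-e^{-s})^{1/2}$; combined with the prefactor $\tfrac{e^{-s}}{2(1-e^{-s})^{1/2}}$ and the $1/N$ variance factor this equals $\tfrac{e^{-s}}{2N}\E[(\tr_N\otimes\tr_N)(\partial_i D_iP(X))\,\tr_N Q(Y)]$. For $R_i^N$, both $X$ and $Y$ depend on $R$, so IPP yields a self-derivative term plus a cross term where the derivative lands on $\tr_N Q(Y)$ and, by the cyclic-derivative identity $\tfrac{\partial}{\partial r_{ba}}\tr_N Q(Y) \propto (D_iQ(Y))_{ab}$, becomes $D_iQ(Y)$. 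The prefactor $-\tfrac{e^{-s}}{2(e^{-s}-e^{-t})^{1/2}}$ combined with the chain-rule factor $(e^{-s}-e^{-t})^{1/2}$ produces $-\tfrac{e^{-s}}{2N}$ on both pieces.

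The crucial observation is that the self-derivative $\partial_i D_iP(X)$ contributions from $R$ and $S$ have equal magnitude and opposite signs, and so cancel exactly, leaving $-\tfrac{e^{-s}}{2N}\sum_i \E[\tr_N(D_iP(X)\,D_iQ(Y))]$ from the first sum. A symmetric analysis of the $Y$-sum, via IPP on $R$ and $T$, contributes the same quantity, so $f'(s) = -\tfrac{e^{-s}}{N}\sum_i \E[\tr_N(D_iP(X^N(s))\,D_iQ(Y^N(s)))]$; integrating $-f'(s)$ over $[0,t]$ yields the claimed identity. I expect the principal difficulty to lie in the bookkeeping of chain-rule factors and in rigorously handling the exponentials $e^{\i R}$ inside $P$ and $Q$, which forces us to use Proposition \ref{3duhamel} and the minimal tensor product of Lemma \ref{1faith} so that all tensor-valued derivatives live in the appropriate $\CC^*$-algebra; once these are set up, the cancellation itself is elementary.
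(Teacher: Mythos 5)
Your proposal is correct and follows essentially the same route as the paper: the same interpolation $s\mapsto\big((e^{-s}-e^{-t})^{1/2}R^N+(1-e^{-s})^{1/2}S^N,\ (e^{-s}-e^{-t})^{1/2}R^N+(1-e^{-s})^{1/2}T^N\big)$, differentiation via Duhamel's formula and the identity $\tr_N(\partial_iP\#B)=\tr_N(D_iP\,B)$, Gaussian integration by parts in $R^N,S^N,T^N$, exact cancellation of the $\partial_iD_iP$ (resp.\ $\partial_iD_iQ$) self-terms, and integration of $-f'(s)$ over $[0,t]$. Your bookkeeping of the prefactors, yielding $f'(s)=-\tfrac{e^{-s}}{N}\sum_i\E[\tr_N(D_iP\,D_iQ)]$, is consistent with the stated result.
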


\begin{proof}
	
	We define the following function,
	\begin{align*}
	h(s) = \E\Big[ &\tr_{N}\left( P\left( (e^{-s}-e^{-t})^{1/2}R^N + (1-e^{-s})^{1/2}S^N ,A^N \right) \right) \\ 
	&\tr_{N}\left( Q\left( (e^{-s}-e^{-t})^{1/2}R^N + (1-e^{-s})^{1/2}T^N ,A^N \right) \right) \Big] .
	\end{align*}
	
	\noindent To simplify notations, we set 
	$$ S^{N,s} = \left( (e^{-s}-e^{-t})^{1/2}R^N + (1-e^{-s})^{1/2}S^N ,A^N \right),$$
	$$ T^{N,s} = \left( (e^{-s}-e^{-t})^{1/2}R^N + (1-e^{-s})^{1/2}T^N ,A^N \right). $$
	
	\noindent Then we have,
	$$ \cov\Big(\tr_N\left(P\left( (1-e^{-t})^{1/2}R^N ,A^N \right)\right), \tr_N\left(Q\left( (1-e^{-t})^{1/2}R^N ,A^N \right)\right) \Big) = -\int_{0}^t \frac{dh}{ds}(s) \ ds .$$
	
	\noindent Thanks to Duhamel's formula (see \eqref{3duha}) we find 
	$$ \frac{d P\left( S^{N,s} \right) }{ds} = -\frac{e^{-s}}{2} \sum_{i=1}^d \partial_i P\left( S^{N,s} \right) \# \left( \frac{R^N_i}{(e^{-s}-e^{-t})^{1/2}} - \frac{S^N_i}{(1-e^{-s})^{1/2}} \right) .$$
	
	\noindent Since $\tr_N(\partial_i P \# B) = \tr_N(D_iP\times B)$, we compute,
	\begin{align*}
	\frac{dh}{ds}(s) = -\frac{e^{-s}}{2} \sum_i \E\Bigg[& \tr_{N}\left( D_i P\left( S^{N,s} \right) \left( \frac{R^N_i}{(e^{-s}-e^{-t})^{1/2}} - \frac{S^N_i}{(1-e^{-s})^{1/2}} \right) \right) \tr_{N}\left( Q\left( T^{N,s} \right) \right) \\
	&+ \tr_{N}\left( P\left( S^{N,s} \right) \right) \tr_{N}\left( D_i Q\left( T^{N,s} \right) \left( \frac{R^N_i}{(e^{-s}-e^{-t})^{1/2}} - \frac{T^N_i}{(1-e^{-s})^{1/2}} \right) \right)  \Bigg] .
	\end{align*}
	
	\noindent But by using integration by parts formula \eqref{3IPPG2}, we get that 
	\begin{align*}
	&\E\Bigg[ \tr_{N}\left( D_i P\left( S^{N,s} \right) \frac{R^N_i}{(e^{-s}-e^{-t})^{1/2}} \right) \tr_{N}\left( Q\left( T^{N,s} \right) \right) \Bigg] \\
	&= \frac{1}{N} \sum_{ 1\leq a,b \leq N} \E\Bigg[ \tr_{N}\left( E_{a,b}\ \partial_i D_i P\left( S^{N,s} \right) \# E_{b,a} \right)\times  \tr_{N}\left( Q\left( T^{N,s} \right) \right) \\
	& \quad\quad\quad\quad\quad\quad + \tr_{N}\left( D_i P\left( S^{N,s} \right) E_{a,b} \right)\times \tr_{N}\left( D_i Q\left( T^{N,s} \right) E_{b,a}\right) \Bigg] .
	\end{align*}
	
	\noindent And similarly
	\begin{align*}
	&\E\Bigg[ \tr_{N}\left( D_i P\left( S^{N,s} \right) \frac{S^N_i}{(1-e^{-s})^{1/2}} \right) \tr_{N}\left( Q\left( T^{N,s} \right) \right) \Bigg] \\
	&= \frac{1}{N} \sum_{ 1\leq a,b \leq N} \E\Bigg[ \tr_{N}\left( E_{a,b}\ \partial_i D_i P\left( S^{N,s} \right) \# E_{b,a} \right)\times  \tr_{N}\left( Q\left( T^{N,s} \right) \right) \Bigg].
	\end{align*}
	
	\noindent Hence
	\begin{align*}
		&\E\Bigg[ \tr_{N}\left( D_i P\left( S^{N,s} \right) \left( \frac{R^N_i}{(e^{-s}-e^{-t})^{1/2}} - \frac{S^N_i}{(1-e^{-s})^{1/2}} \right) \right) \tr_{N}\left( Q\left( T^{N,s} \right) \right) \Bigg] \\
		&= \frac{1}{N^2} \E\Bigg[ \tr_{N}\Big( D_i P\left( S^{N,s} \right) D_i Q\left( T^{N,s} \right) \Big) \Bigg] .
	\end{align*}

	\noindent Therefore with similar computations we conclude,
	\begin{align*}
	\frac{dh}{ds}(s) = -\frac{1}{N} e^{-s} \sum_i \E\Bigg[ &\tr_{N}\Big( D_i P\left( S^{N,s} \right) D_i Q\left( T^{N,s} \right) \Big)  \Bigg] .
	\end{align*}
	
	\noindent Hence the conclusion.
	
\end{proof}

\subsection{A first rough formulation of the coefficients}

\label{3mauvaisepres}

In this subsection we prove the following lemma which will be the backbone of the proof of the topological expansion. The heuristics behind this lemma is that if $Q\in \F_{d(c_n+1),q}$, $X^N$ and $Z^N$ are matrices as in Lemma \ref{3imp2}, $x$ and $(y_i)_{i\geq 1}$ are systems of $d$ free semicircular variables free between each other, then we can find $R\in \F_{d(c_{n+1}+1),q}$ such that  
\begin{align*}
&\E\left[\tau_N\Big(Q\left(X^N,(y_i)_{1\leq i\leq c_n},Z^N\right)\Big)\right] - \tau_N\Big(Q\left(x,(y_i)_{1\leq i\leq c_n},Z^N\right)\Big) \\
&= \frac{1}{N^2} \E\left[\tau_N\Big(R\left(X^N,(y_i)_{1\leq i\leq c_{n+1}},Z^N\right)\Big)\right].
\end{align*}

\noindent Then we will only need to apply this lemma recursively to build the topological expansion. Note that thanks to the definition of $\A_N$ in Definition \ref{3tra}, it makes sense to consider matrices and free semicircular variables in the same space. One can also assume that those matrices are random thanks to \cite[Proposition 2.7]{un}. Finally to better understand Equation \eqref{3nececpour}, you can check Examples \ref{3exe11}, \ref{3exe22} and \ref{3exe33}.

\begin{lemma}
	\label{3imp2}
	Let the following objects be given,
	\begin{itemize}
		\item $X^N = (X_1^N,\dots,X_d^N)$ independent $GUE$ matrices of size $N$,
		\item $x,z^1,z^2$ free families of $d$ free semi-circular variables,
		\item $y_s = (y_{s,1},\dots,y_{s,d_s})$ for $s$ from $1$ to $n$, systems of free semicircular variables, free between each other and from $x$,
		\item $v_s,w_s$ free copies of $y_s$, free between each other,
		\item $Z^{N} = (Z_1^{N},\dots,Z_q^{N})$ deterministic matrices and their adjoints,
		\item $ Y^N = \left((1-e^{-t_1})^{1/2}y_1,\dots,(1-e^{-t_n})^{1/2}y_n,X^N,Z^{N}\right)$ where $t_i\in\R^+$ is fixed,
		\item $ Y = \left((1-e^{-t_1})^{1/2}y_1,\dots,(1-e^{-t_n})^{1/2}y_n,x,Z^{N}\right)$,
		\item $ z_{r}^{1} = \Big( (1-e^{-t_1})^{1/2}v_1,\dots,(1-e^{-t_n})^{1/2}v_n, (1-e^{-r})^{1/2} z^1 + (e^{-r}-e^{-t})^{1/2} x + e^{-t/2} X^N ,Z^{N}\Big) ,$
		\item for $s$ from $1$ to $n$,
		\begin{align*}
		z_{r}^{1,s} = \Big(& (1-e^{-t_1})^{1/2}v_1,\dots, (1-e^{-r})^{1/2} v_s + (e^{-r}-e^{-t_s})^{1/2}y_s, \\ &(1-e^{-t_{s+1}})^{1/2}y_{s+1}, \dots, (1-e^{-t_n})^{1/2}y_n, (1-e^{-t})^{1/2} x + e^{-t/2} X^N ,Z^{N}\Big) ,
		\end{align*}
		\item $z_{r}^{2}$ and $z_{r}^{2,s}$, defined similarly but with $w$ and $z^2$ instead of $v$ and $z^1$,
		\item $\widetilde{z}_{r}^{1},\widetilde{z}_{r}^{2},\widetilde{z}_{r}^{1,s}$ and $\widetilde{z}_{r}^{2,s}$ defined similarly but where we replaced $v_s,w_s,y_s,z^1,z^2,x$ by free copies,
		\item $Q\in \F_{d_1+\dots+d_n+d,q}$.
	\end{itemize}
	
	\noindent Then, for any $N$, with $\partial_{s,j}$ defined similarly to the noncommutative differential introduced in Definition \ref{3application} but with respect to $(1-e^{-t_s})^{1/2}y_{s,j}$ instead of $X_i$.
	\begin{align}
	\label{3nececpour}
	&\E\left[\tau_N\Big(Q\left(Y^N\right)\Big)\right] - \tau_N\Big(Q\left(Y\right)\Big) \nonumber\\
	&=  \frac{1}{2N^2} \int_0^{\infty} e^{-t} \sum_{\substack{1\leq i\leq d\\1\leq s \leq n,\\ 1\leq j\leq d_s} }\ \int_0^{t_s} e^{-r}\ \E\Big[ \tau_{N}\Big( \Big(\partial_{s,j}^2\left( \partial_i^1 D_i Q\right)(z_{r}^{1,s}) \boxtimes \partial_{s,j}^1 \left( \partial_i^1 D_i Q\right)(\widetilde{z}_{r}^{1,s}) \Big) \nonumber\\
	&\quad\quad\quad\quad\quad\quad\quad\quad\quad\quad\quad\quad\quad\quad\quad\quad\quad \boxtimes \Big(\partial_{s,j}^2\left( \partial_i^2 D_i Q\right)(\widetilde{z}^{2,s}_{r}) \boxtimes \partial_{s,j}^1 \left( \partial_i^2 D_i Q\right)(z^{2,s}_{r}) \Big) \Big) \Big] dr\ dt \\
	&\quad +  \frac{1}{2N^2} \int_0^{\infty} e^{-t} \sum_{1\leq i,j\leq d }\ \int_0^{t} e^{-r}\ \E\Big[ \tau_{N}\Big( \Big(\partial_{j}^2\left( \partial_i^1 D_i Q\right)(z_{r}^{1}) \boxtimes \partial_{j}^1 \left( \partial_i^1 D_i Q\right)(\widetilde{z}_{r}^{1}) \Big) \nonumber\\
	&\quad\quad\quad\quad\quad\quad\quad\quad\quad\quad\quad\quad\quad\quad\quad\quad\quad\quad \boxtimes \Big(\partial_{j}^2\left( \partial_i^2 D_i Q\right)(\widetilde{z}^{2}_{r}) \boxtimes \partial_{j}^1 \left( \partial_i^2 D_i Q\right)(z^{2}_{r}) \Big) \Big) \Big] dr\ dt. \nonumber
	\end{align}
	
\end{lemma}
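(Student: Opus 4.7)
The plan is to use an Ornstein--Uhlenbeck interpolation between $X^N$ and $x$ together with the covariance identity of Proposition~\ref{3concentration}. Inside $(\A_N,\tau_N)$ from Definition~\ref{3tra}, introduce
\[
\sigma_t \;=\; e^{-t/2}X^N + (1-e^{-t})^{1/2}x,
\]
and let $Y_t$ denote $Y^N$ with its $X^N$-slot replaced by $\sigma_t$, so that $Y_0=Y^N$ and $Y_\infty=Y$. The fundamental theorem of calculus together with Duhamel's formula (Proposition~\ref{3duhamel}) and the trace property gives
\[
\E[\tau_N(Q(Y^N))]-\tau_N(Q(Y)) \;=\; -\int_0^\infty \tfrac{1}{2}\sum_i \E\!\left[\tau_N\!\left(D_iQ(Y_t)\left(\tfrac{e^{-t}}{(1-e^{-t})^{1/2}}x_i - e^{-t/2}X^N_i\right)\right)\right]\!dt.
\]

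Apply Gaussian integration by parts (Proposition~\ref{3SD}) to the $X^N_i$-contribution and the free Schwinger--Dyson equation (Proposition~\ref{3SDE}) to the $x_i$-contribution. Both identities rewrite their respective traces as expectations of $\tau_N^{\otimes 2}(\partial_i D_iQ(Y_t))$-type quantities, and the pieces of $\partial_i D_iQ(Y_t)$ whose two tensor legs both lie in $\M_N(\C)$ give the same value on both sides. These cancel thanks to $(1-e^{-t})^{1/2}\cdot\tfrac{e^{-t}}{(1-e^{-t})^{1/2}} = e^{-t} = e^{-t/2}\cdot e^{-t/2}$. What is left is the mismatch between $\E[\tau_N(A)\tau_N(B)]$ read off by the entry-wise Gaussian integration by parts on a tensor leg $A\otimes B$ of $\partial_i D_iQ(Y_t)$ involving a $y_s$- or $x$-slot, and the factorised product $\E[\tau_N(A)]\,\E[\tau_N(B)]$ produced on the Schwinger--Dyson side; in other words, only covariances of two traces survive.

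Each surviving covariance is then unfolded via Proposition~\ref{3concentration}. When the offending tensor leg involves the $y_s$-slot, Proposition~\ref{3concentration} applied to the family $(1-e^{-t_s})^{1/2}y_s$, interpolated against free copies $v_s,\tilde v_s,w_s,\tilde w_s$, introduces the inner integral $\int_0^{t_s}e^{-r}\,dr$ and produces the four interpolation points $z_r^{1,s},\tilde z_r^{1,s},\tilde z_r^{2,s},z_r^{2,s}$. When the offending leg involves the $x$-slot (present through $\sigma_t$), the same proposition applied with the free copies $z^1,\tilde z^1,z^2,\tilde z^2$ produces $\int_0^t e^{-r}\,dr$ and the points $z_r^1,\tilde z_r^1,\tilde z_r^2,z_r^2$. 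The new differentials $\partial_{s,j}D_j$ (resp.\ $\partial_j D_j$) supplied by Proposition~\ref{3concentration}, combined with the $\partial_i D_i$ already present, assemble into the fourfold $\boxtimes$-product of~\eqref{3nececpour}, each factor evaluated at its own interpolation point.

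The hard part is the combinatorial bookkeeping: one has to check that after the $\M_N(\C)$-cancellation in the first step, the remaining covariance terms are exactly of the form required by Proposition~\ref{3concentration}, and that its second application yields precisely the asserted pairing of the four free copies with the four derivative factors $\partial_{s,j}^{1}(\partial_i^{1}D_iQ),\partial_{s,j}^{2}(\partial_i^{1}D_iQ),\partial_{s,j}^{1}(\partial_i^{2}D_iQ),\partial_{s,j}^{2}(\partial_i^{2}D_iQ)$ (and likewise with $\partial_j$ in place of $\partial_{s,j}$). The prefactor $\tfrac{1}{2N^2}$ assembles the $\tfrac{1}{2}$ coming from $\tfrac{d}{dt}\sigma_t$ with the two factors of $\tfrac{1}{N}$ supplied by the outer Gaussian integration by parts for $X^N$ and the inner one hidden inside Proposition~\ref{3concentration}.
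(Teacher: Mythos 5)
Your opening step is the same as the paper's: interpolate via $Y^N_t$, differentiate with Duhamel, and apply Gaussian integration by parts to the $X^N_i$-part and the free Schwinger--Dyson equation to the $x_i$-part. But what comes next has a genuine gap. After those two steps the residual term is
\begin{equation*}
\Lambda_{N,t}\;=\;\tau_N\otimes\tau_N\big(\partial_i D_i Q(Y^N_t)\big)\;-\;\frac{1}{N}\sum_{u,v}\tau_N\big(E_{u,v}\,\partial_i D_i Q(Y^N_t)\#E_{v,u}\big),
\end{equation*}
and this is \emph{not} a covariance of two traces over any source of randomness: at a fixed realization of $X^N$ it is a deterministic quantity in the free product $\A_N=\M_N(\C)*\CC_d$, and the semicircular families $y_s$ and $x$ are deterministic operators, not Gaussian objects. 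Consequently Proposition \ref{3concentration}, which is a statement about honest GUE random matrices, cannot be ``applied to the family $(1-e^{-t_s})^{1/2}y_s$ interpolated against free copies'': there is no probability space over which those variables fluctuate, and the free copies $v_s,w_s,z^1,z^2$ cannot be produced this way. Likewise your claimed cancellation of ``pieces whose two tensor legs both lie in $\M_N(\C)$'' is not available: the identity $\frac1N\sum_{u,v}\tau_N(E_{u,v}AE_{v,u}B)=\tau_N(A)\tau_N(B)$ holds only when $A,B$ are genuine $N\times N$ matrices, whereas the tensor legs of $\partial_i D_i Q(Y^N_t)$ are elements of the free product and do not decompose according to such a dichotomy; the mismatch must be treated globally, not leg by leg.

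The actual content of the lemma is precisely to give $\Lambda_{N,t}$ a covariance interpretation, and the paper does this through machinery absent from your sketch: the semicircular variables are approximated by independent GUE matrices of size $kN$ (Theorem 5.4.5 of \cite{alice}, Proposition 3.5 of \cite{un}); the $\#$-term is rewritten with the projections $P_{l,l'}=I_N\otimes E_{l,l'}$; unitary invariance reduces the sum over $l,l'$ to the two terms $P_{1,1}\otimes P_{1,1}$ and $P_{1,2}\otimes P_{1,2}$; and the dedicated Lemma \ref{3coeffnndiag} is needed to show that the spurious term $k^2(k-1)\,\E_k[\tau_{kN}]\otimes\E_k[\tau_{kN}]\big(\partial_iD_iQ(Z_k)P_{1,2}\otimes P_{1,2}\big)$ vanishes as $k\to\infty$. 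Only after these reductions does $\Lambda_{N,t}$ become a limit of genuine covariances of traces of random matrices, to which Proposition \ref{3concentration} applies at size $kN$, telescoped over the auxiliary families one at a time (using $Z^{1,s}_{k,t_s}=Z^{1,s+1}_{k,0}$). Finally, identifying the $k\to\infty$ limit---which is where the free copies $\widetilde z^{1,s}_r,\widetilde z^{2,s}_r$, i.e.\ the pairing of four distinct free families with the four derivative factors, actually come from---requires a further concentration estimate showing that $\E_k\big[\id_N\otimes\tau_k(\cdot)\,\id_N\otimes\tau_k(\cdot)\big]$ factorizes up to $\mathcal{O}(k^{-2})$, together with the block description of a GUE matrix of size $kN$ as an $N\times N$ matrix with asymptotically circular entries. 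None of this is ``combinatorial bookkeeping'': without it the covariance structure you propose to unfold does not exist, so the proposal as written does not constitute a proof.
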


Note that we used the definition of $\partial_i$ from Definition \ref{3technicality}. Thus, when we need to compose those operators, we do as in Definition \ref{3operatordef}. For a more concrete example don't hesitate to also check Example \ref{3exe33}. Now we need to prove the following technical lemma.

\begin{lemma}
	\label{3coeffnndiag}
	If $Y^{kN}$ is a family of $l$ independent GUE matrices of size $kN$, $K^N$ a family of $q$ deterministic matrices, then let
	$$ 	S_k = \left( Y^{kN}, A^N\otimes I_k \right) .$$
	With $P_{1,2} = I_N\otimes E_{1,2}$, $\E_k$ the expectation with respect to $ Y^{kN} $, given $Q\in \mathcal{F}_{l,q}$, we have that 
	$$ \lim_{k\to \infty} k^{3/2} \E_k\left[ \ts_{kN}\left( Q(S_k) P_{1,2} \right)\right] =0$$
\end{lemma}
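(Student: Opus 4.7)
The strategy is to prove the much stronger fact that $\E_k[\tau_{kN}(Q(S_k)P_{1,2})] = 0$ for every $k$; the prefactor $k^{3/2}$ is then irrelevant. The key mechanism is the unitary invariance of the GUE applied to unitaries of the block form $I_N \otimes u$ with $u$ ranging over the unitary group $U(k)$.

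The first step is to fix $u \in U(k)$, set $U = I_N \otimes u$, and observe that the family $U S_k U^*$ has the same joint distribution as $S_k$. Indeed, $U(A_j^N \otimes I_k)U^* = A_j^N \otimes I_k$ exactly, since $u$ and $u^*$ are absorbed on the second factor by $I_k$; and the joint law of $(U Y_i^{kN} U^*)_{1\le i\le l}$ coincides with that of $(Y_i^{kN})_{1\le i\le l}$ by the unitary invariance of independent GUE matrices.

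The second step is to propagate this distributional invariance to $Q(S_k)$ for every $Q \in \F_{l,q}$, i.e.\ to show $U Q(S_k) U^* = Q(U S_k U^*)$. This is clear for polynomials, and for each exponential generator it reduces to $U e^{\i R(S_k)} U^* = e^{\i R(U S_k U^*)}$, which follows from the definition of the evaluation map on $\F_{l,q}$ together with the standard unitary covariance of the continuous functional calculus; multiplicativity and linearity then cover the general case. Taking expectations gives $U \E_k[Q(S_k)] U^* = \E_k[Q(S_k)]$ for every $u \in U(k)$, so $\E_k[Q(S_k)]$ commutes with every element of $I_N \otimes \M_k(\C)$, and hence lies in the commutant $\M_N(\C) \otimes \C\cdot I_k$. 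Write $\E_k[Q(S_k)] = M \otimes I_k$ for some $M \in \M_N(\C)$ (depending on $k, N, A^N$).

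The last step is a direct computation using $\tau_{kN} = \tau_N \otimes \tau_k$:
$$ \E_k\bigl[\tau_{kN}(Q(S_k)P_{1,2})\bigr] = \tau_{kN}\bigl((M \otimes I_k)(I_N \otimes E_{1,2})\bigr) = \tau_N(M)\,\tau_k(E_{1,2}) = 0 , $$
since $E_{1,2}$ is strictly off-diagonal in $\M_k(\C)$ and hence traceless. I do not foresee a real obstacle; the only mild point of care is the second step, which requires invoking the definition of evaluation on $\F_{l,q}$ rather than simply on $\A_{l,q}$, but this is straightforward once the exponential generators are treated via the covariance identity above.
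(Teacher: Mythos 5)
Your argument is correct, and it establishes something strictly stronger than the lemma: for the matrices considered here the quantity vanishes exactly for every fixed $k$, since conjugation by $I_N\otimes u$ fixes the $A^N\otimes I_k$, preserves the joint law of the independent GUE family $Y^{kN}$, and commutes with the evaluation of any $Q\in\F_{l,q}$ (polynomials, sums, products, and the exponential generators alike), so that $\E_k[Q(S_k)]$ lies in the commutant $\M_N(\C)\otimes\C I_k$ and its normalized trace against $I_N\otimes E_{1,2}$ is zero; one can even shortcut the commutant step by taking $u=\operatorname{diag}(-1,1,\dots,1)$, which sends $P_{1,2}$ to $-P_{1,2}$ and forces the expectation to equal its own negative. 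This is a genuinely different route from the paper, which never exploits this exact symmetry to its full extent: there, one introduces a generating function over monomials interleaved with exponentials $e^{\i\alpha B_i}$, controls its $\alpha$-derivative, applies Gr\"onwall's inequality to reduce to the purely polynomial case $\alpha=0$, and then invokes the estimate from the proof of Lemma 3.7 of \cite{un} to obtain only the stated decay $o(k^{-3/2})$. Your symmetry argument is shorter, self-contained, and gives exact vanishing (so the $k^{3/2}$ prefactor is irrelevant), while the paper's machinery is the kind that survives in situations where such an exact invariance is unavailable (for instance for covariance-type terms, which are handled separately via Proposition \ref{3concentration}, or if the deterministic matrices were not of the form $A^N\otimes I_k$); the only points worth making explicit in your write-up are the (easy) integrability needed to exchange $\E_k$ with conjugation and the trace, and the observation that the unitaries $I_N\otimes u$ span $I_N\otimes\M_k(\C)$ so that the commutant identification is legitimate.
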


\begin{proof}
	Given $A_1,\dots,A_r, B_1,\dots, B_r\in \PP_{l,q}$, assuming that the $B_i$'s are self-adjoints, we define the following quantities,
	$$ f_{A}(\alpha) = \E_k\left[ \ts_{kN}\left( (A_1 e^{\i \alpha B_1 } \dots A_r e^{\i \alpha B_r }) (S_k) P_{1,2} \right)\right] ,$$
	$$ d_n(\alpha) = \max_{\sum_i \deg A_i \leq n,\ A_i\text{ monomials}} \left| f_{A}(\alpha) \right| .$$
	
	\noindent Thanks to Proposition \ref{3bornenorme}, we know that there exists constants $\gamma$ and $D$ (depending on $N$,$\norm{X^N}$ and $\norm{K^N}$) such that for any $n\leq \gamma k$  and $\alpha\in[0,1]$, $|d_n(\alpha)| \leq D^n$. Consequently we define 
	$$ g(a,\alpha) = \sum_{n\leq \gamma k /2} d_n(\alpha) a^n .$$
	
	\noindent Let $m = \sup_i \deg B_i$ and $A$ be such that $\sum_i \deg A_i \leq n$, there exists a constant $C_B$ which only depends on the coefficients of the $B_i$'s such that
	$$ \left| \frac{d f_{A}(\alpha)}{d\alpha} \right| \leq C_B\ d_{n+m}(\alpha).$$
	
	\noindent By integrating this inequality, we get that for any $\alpha\in [0,1]$
	$$ \left| f_{A}(\alpha) \right| \leq \left| f_{A}(0) \right| + C_B  \int_{0}^{\alpha} d_{n+m}(\beta) d\beta.$$
	
	\noindent And by taking the supremum over $A$, we get that 
	$$ d_n(\alpha) \leq d_n(0) + C_B  \int_{0}^{\alpha} d_{n+m}(\beta) d\beta.$$
	
	\noindent Hence by summing over $n$, we have for $a<1/D$,
	\begin{align*}
		g(a,\alpha) &= \sum_{n\leq \gamma k /2} d_n(\alpha) a^n \\
		&\leq \sum_{n\leq \gamma k /2} d_n(0)a^n + C_B  \int_{0}^{\alpha} \sum_{n\leq \gamma k /2}	 d_{n+m}(\beta) a^n d\beta \\
		&\leq g(a,0) + C_B  \int_{0}^{\alpha} \sum_{m\leq n\leq \gamma k /2} d_{n}(\beta) a^{n-m} d\beta + C_B \sum_{\gamma k /2 < n\leq m+ \gamma k /2} D^n a^{n-m} \\
		&\leq g(a,0) + m C_B a^{-m} (aD)^{\gamma k / 2} +  C_B a^{-m}\int_{0}^{\alpha} g(a,\beta) d\beta .
	\end{align*}
	
	\noindent Thanks to Gr\"onwall's inequality (see \cite[Lemma 8.4]{legallfr}), we get that for any $\alpha\in [0,1]$,
	$$ g(a,\alpha) \leq \left( g(a,0) +  m C_B a^{-m} (aD)^{\gamma k / 2} \right) e^{\alpha C_B a^{-m}} .$$
	
	\noindent Thus, for $a<1/D$, we have 
	$$ \limsup_{k\to \infty} k^{3/2}g(a,\alpha) \leq e^{\alpha C_B a^{-m}} \limsup_{k\to \infty} k^{3/2} g(a,0) .$$
	
	\noindent However, we have
	$$ g(a,0) = \sum_{n\leq \gamma k /2} a^n \max_{A \text{ monomial, } \deg A \leq n} \left| \E_k\left[ \ts_{kN}\left( A(S_k) P_{1,2} \right)\right] \right| .$$
	
	\noindent We refer to the proof of \cite[Lemma 3.7]{un} to prove that $\limsup_{k\to \infty} k^{3/2} g(a,0) = 0$ (with the notations of \cite{un}, it is the same thing as to show that $k^{3/2} f_{\gamma k /2}(a)$ converges towards $0$). Hence for any $A,B$, 	
	\begin{align*}
		\limsup_{k\to \infty} k^{3/2} \left| \E_k\left[ \ts_{kN}\left( (A_1 e^{\i B_1 } \dots A_r e^{\i B_r }) (S_k) P_{1,2} \right)\right] \right| &\leq a^{-\sum_i \deg A_i} \limsup_{k\to \infty} k^{3/2}g(a,1)  \\
		&\leq a^{-\sum_i \deg A_i} e^{C_B a^{-m}} \limsup_{k\to \infty} k^{3/2}g(a,0) \\
		&=0
	\end{align*}
	
	\noindent Hence the conclusion.

\end{proof}

\begin{proof}[Proof of Lemma \ref{3imp2}]
	The proof being one of the longest of the paper, we divide it in four different steps. In the first one we interpolate between the GUE random matrices and the free semicircular variables and exhibit the term $\Lambda_{N,t}$ that we study in the next following steps. In the second one, with the help of Lemma \ref{3coeffnndiag}, we reformulate the term $\Lambda_{N,t}$ in order to express it as a covariance. In the third step, we use Proposition \ref{3concentration} to express the covariance as an integral. And finally we finish the computations in the fourth and last step. \\
	
	\textbf{Step 1:} With 
	$$ Y_{t}^N = \left((1-e^{-t_1})^{1/2}y_1, \dots,(1-e^{-t_n})^{1/2} y_n,(1-e^{-t} )^{1/2} x + e^{-t/2}X^N,Z^{N}\right),$$
	we have, 
	$$ \E\left[\tau_N\Big(Q\left(Y^N\right)\Big)\right] - \tau_N\Big(Q\left(Y\right)\Big) = -\int_{0}^{\infty} \E\left[ \frac{d}{dt} \tau_N\Big( Q\left(Y_{t}^N\right)\Big) \right] dt . $$
	
	\noindent We can compute 
	$$ \frac{d}{dt}\tau_N\Big(Q\left(Y_{t}^N\right)\Big) = \frac{e^{-t}}{2} \sum_i \tau_N\left(D_i Q\left(Y_{t}^N\right) \left( \frac{x_i}{(1-e^{-t})^{1/2}} - e^{t/2} X_i^N\right) \right) $$
	
	\noindent Thus, thanks to Gaussian integration by parts (see \eqref{3IPPG2}) and Schwinger-Dyson equations (see Proposition \ref{3SDE}), we get that
	\begin{align}
	\label{3vantlmbd}
	&\E\left[\frac{d}{dt}\tau_N\Big(Q\left(Y_{t}^N\right)\Big)\right] \nonumber \\
	&= \E\Bigg[ \frac{e^{-t}}{2} \sum_i \left( \tau_N\otimes\tau_N \Big(\partial_i D_i Q\left(Y_{t}^N\right)\Big)  - \frac{1}{N}\sum_{u,v} \tau_N\Big( E_{u,v}\ \partial_i D_i Q\left(Y_{t}^N \right)\# E_{v,u} \Big)\right) \Bigg].
	\end{align}
	
	\noindent Let
	\begin{align*}
	\Lambda_{N,t} = \tau_N\otimes\tau_N \Big(\partial_i D_i Q\left(Y_{t}^N\right)\Big)  - \frac{1}{N}\sum_{u,v} \tau_N\Big( E_{u,v}\ \partial_i D_i Q\left(Y_{t}^N\right)\# E_{v,u} \Big).
	\end{align*}
	
	\noindent Thanks to \cite[Theorem 5.4.5]{alice}, we have that if 
	$$ Z_k = \left((1-e^{-t_1})^{1/2}Y^{kN}_1,\dots,(1-e^{-t_n})^{1/2}Y^{kN}_n,(1-e^{-t})^{1/2}Y^{kN}_{n+1} + e^{-t/2} X^N\otimes I_k,Z^{N}\otimes I_k\right), $$
	with $Y^{kN}_s$ being independent families of $d_s$ independent GUE matrices (with $d_{n+1}=d$), independent from $X^N$, then with $\E_k$ the expectation with respect to $Y^{kN}_s$ for every $s$,
	\begin{align}
	\label{3limitnec1}
	\Lambda_{N,t} = \lim\limits_{k\to\infty} &\quad \E_k[\ts_{kN}]\otimes\E_k[\ts_{kN}] \Big(\partial_i D_i Q\left(Z_k\right)\Big) \\
	&- \E_k\Bigg[ \frac{1}{N} \sum_{1\leq u,v\leq N} \ts_{kN}\Big( E_{u,v}\otimes I_{k}\ \partial_i D_i Q\left(Z_k\right)\# E_{v,u}\otimes I_{k} \Big)\Bigg] . \nonumber
	\end{align}
	
	\noindent For more information, we refer to \cite[Proposition 3.5]{un}. See also the definition of $\A_N$ in Definition \ref{3tra}. \\
	
	\textbf{Step 2:} Let $A,B$ be matrices of $\M_N(\C) \otimes \M_k(\C)$, with $(g_a)_{1\leq a\leq N}$ the canonical basis of $\C^N$ and $(f_b)_{1\leq b\leq k}$ the one of $\C^k$, since $I_k = \sum_l E_{l,l}$ and $\forall M\in \M_N(\C) \otimes \M_k(\C)$, $\ts_{kN}(M) = \sum_{a,b} g_a^*\otimes f_b^* M g_a\otimes f_b$, 
	\begin{align*}
	& \frac{1}{N} \sum_{1\leq u,v\leq N} \ts_{kN}\Big( E_{u,v}\otimes I_{k}\ A\ E_{v,u}\otimes I_{k}\ B \Big) \\
	&= \frac{1}{N} \sum_{1\leq u,v\leq N} \sum_{1\leq l,l'\leq k} \ts_{kN}\left( E_{u,v}\otimes E_{l,l}\ A\ E_{v,u}\otimes E_{l',l'}\ B\right)  \nonumber \\
	&= \frac{1}{N^2k} \sum_{1\leq l,l'\leq k} \sum_{1\leq v\leq N} g_v^*\otimes f_l^*\ A\ g_v\otimes f_{l'} \sum_{1\leq u\leq N} g_u^*\otimes f_{l'}^*\ B\ g_u\otimes f_l \nonumber\\
	&= \frac{1}{k} \sum_{1\leq l,l'\leq k} \ts_N( I_N\otimes f_l^*\ A\ I_N\otimes f_{l'})\ \ts_N(I_N\otimes f_{l'}^*\ B\ I_N\otimes f_l) \nonumber\\
	&= k \sum_{1\leq l,l'\leq k} \ts_{kN}\big( A\ I_N\otimes E_{l',l}\big)\ \ts_{kN}\big( B\ I_N\otimes E_{l,l'}\big). \nonumber
	\end{align*}
	
	\noindent Hence with convention $ P_{l,l'} = I_N\otimes E_{l,l'}$, we have
	\begin{equation}
	\label{3limitnec10}
	\frac{1}{N} \sum_{1\leq u,v\leq N} \ts_{kN}\Big( E_{u,v}\otimes I_{k}\ \partial_i D_i Q\left(Z_k\right)\# E_{v,u}\otimes I_{k} \Big) = k \sum_{1\leq l,l'\leq k} \ts_{kN}\otimes \ts_{kN}\Big( \partial_i D_i Q\left(Z_k\right) \times P_{l',l}\otimes P_{l,l'} \Big) 
	\end{equation}

\noindent Consequently, we get that

\begin{align}
	\label{3limitnec11}
	\Lambda_{N,t} = \lim\limits_{k\to\infty} \quad &\E_k[\ts_{kN}]\otimes\E_k[\ts_{kN}] \Big(\partial_i D_i Q\left(Z_k\right)\Big) \\
	& - \E_k\Bigg[k \sum_{1\leq l,l'\leq k} \ts_{kN}\otimes \ts_{kN}\Big( \partial_i D_i Q\left(Z_k\right) \times P_{l',l}\otimes P_{l,l'} \Big) \Bigg] . \nonumber
	\end{align}
	
	\noindent Let $U\in\M_k(\C)$ be a unitary matrix, then since for any $i$, 
	$$ I_N\otimes U\ X_i^N\otimes I_{k}\ I_N\otimes U^* = X_i^N\otimes I_{k} ,$$
	$$ I_N\otimes U\ Z_i^{N}\otimes I_k\ I_N\otimes U^* = Z_i^{N}\otimes I_k ,$$
	and that the law of $Y^{kN}_{s,j}$ is invariant by conjugation by a unitary matrix, we get that for any unitary matrices $U$ and $V$,
	\begin{align*}
		&\E_k\left[  \ts_{kN} \right] \otimes \E_k\left[ \ts_{kN} \right]\Big( \partial_i D_i Q\left(Z_k\right) \times P_{l',l}\otimes P_{l,l'} \Big)  \\
		&= \E_k\left[  \ts_{kN} \right] \otimes \E_k\left[ \ts_{kN} \right] \Big( \partial_i D_i Q\left(Z_k\right) \times (I_N\otimes U^*P_{l',l} I_N \otimes U) \otimes (I_N\otimes V^*P_{l,l'} I_N \otimes V) \Big) .
	\end{align*}
			
	\noindent Thus, if $l=l'$, we can pick $U$ such that $U^*E_{l',l} U = E_{1,1}$, and if $l\neq l'$, we can pick $U$ such that $U^*E_{l',l} U = E_{1,2}$. By doing the same for $V$, we have
	\begin{align}
	\label{3limitnec2}
	&k \sum_{1\leq l,l'\leq k} \E_k[\ts_{kN}]\otimes\E_k[\ts_{kN}] \Big( \partial_i D_i Q(Z_k) P_{l',l}\otimes P_{l,l'} \Big) \nonumber \\
	=&\ k^2 \E_k[\ts_{kN}]\otimes\E_k[\ts_{kN}] \Big( \partial_i D_i Q(Z_k) P_{1,1}\otimes P_{1,1} \Big) \\
	&+ k^2(k-1) \E_k[\ts_{kN}]\otimes\E_k[\ts_{kN}] \Big( \partial_i D_i Q(Z_k) P_{1,2}\otimes P_{1,2} \Big) .\nonumber
	\end{align}
	
	\noindent Similarly we also have,
	\begin{align}
	\label{3limitnec3}
	&\E_k[\ts_{kN}]\otimes\E_k[\ts_{kN}] \Big( \partial_i D_i Q(Z_k) \Big) \nonumber \\
	=&\ \sum_{1\leq l,l'\leq k} \E_k[\ts_{kN}]\otimes\E_k[\ts_{kN}] \Big( \partial_i D_i Q(Z_k)\ P_{l,l}\otimes P_{l',l'} \Big) \\
	=&\ k^2\ \E_k[\ts_{kN}]\otimes\E_k[\ts_{kN}] \Big( \partial_i D_i Q(Z_k)\ P_{1,1}\otimes P_{1,1} \Big) .\nonumber
	\end{align}
	
	\noindent By combining equations \eqref{3limitnec11}, \eqref{3limitnec2} and \eqref{3limitnec3}, we get that
	\begin{align*}
	\Lambda_{N,t} = \lim\limits_{k\to\infty} & \quad - \Bigg\{ k \Bigg(\sum_{1\leq l,l'\leq k} \E_k\left[ \ts_{kN}\otimes\ts_{kN} \Big( \partial_i D_i Q(Z_k) P_{l',l}\otimes P_{l,l'} \Big) \right] \\
	& \quad\quad\quad\quad\quad\quad - \E_k[\ts_{kN}] \otimes \E_k[\ts_{kN}] \Big( \partial_i D_i Q(Z_k) P_{l',l}\otimes P_{l,l'} \Big)\Bigg) \\
	& \quad\quad\ + k^2(k-1) \E_k[\ts_{kN}] \otimes \E_k[\ts_{kN}] \Big( \partial_i D_i Q(Z_k) P_{1,2}\otimes P_{1,2} \Big) \Bigg\}. 
	\end{align*}
	
	\noindent Thanks to Lemma \ref{3coeffnndiag} (with $K^N=(X^N,Z^N)$), the last term converges towards $0$. Consequently,
	\begin{align}
	\label{3limitnec5}
	\Lambda_{N,t} = \lim\limits_{k\to\infty} & \quad - k\ \Bigg\{ \sum_{1\leq l,l'\leq k} \E_k\left[ \ts_{kN}\otimes\ts_{kN} \Big( \partial_i D_i Q(Z_k) P_{l',l}\otimes P_{l,l'} \Big) \right]  \\
	& \quad\quad\quad\quad\quad\quad - \E_k[\ts_{kN}] \otimes \E_k[\ts_{kN}] \Big( \partial_i D_i Q(Z_k) P_{l',l}\otimes P_{l,l'} \Big) \Bigg\} . \nonumber
	\end{align}
	
	\textbf{Step 3:} Let $R_s^{kN},S_s^{kN}$ for $s$ from $1$ to $n+1$ be independent families of $d_s$ independent GUE random matrices. We also assume that those families are independent from $X^N$ and $Y^{kN}$. We set the following notations,
	\begin{align*}
	Z_{k,r}^{1} = \Big(&(1-e^{-t_1})^{1/2}S^{kN}_1,\dots,(1-e^{-t_n})^{1/2}S^{kN}_n, \\
	& (1-e^{-r})^{1/2} S^{kN}_{n+1} + (e^{-r}-e^{-t})^{1/2}Y_{n+1}^{kN} + e^{-t/2} X^N\otimes I_k,Z^{N}\otimes I_k\Big) , \\
	Z_{k,r}^{1,s} = \Big(& (1-e^{-t_1})^{1/2}S^{kN}_1,\dots, (1-e^{-r})^{1/2} S_s^{kN} + (e^{-r}-e^{-t_s})^{1/2}Y^{kN}_s , (1-e^{-t_{s+1}})^{1/2}Y^{kN}_{s+1}, \\
	&\dots,(1-e^{-t_n})^{1/2}Y^{kN}_n,(1-e^{-t})^{1/2} Y^{kN}_{n+1} + e^{-t/2} X^N\otimes I_k ,Z^{N}\otimes I_k\Big) ,
	\end{align*}
	
	\noindent And similarly we define $Z_{k,r}^{2}$ and $Z_{k,r}^{2,s}$ but with $R_1^{kN}, \dots,R_s^{kN}$ instead of $S_1^{kN}, \dots,S_s^{kN}$. Thanks to Proposition \ref{3concentration}, and the fact that $Z^{1,s}_{k,t_s} = Z^{1,s+1}_{k,0}$ as well as $Z^{1,n}_{k,t_n} = Z^{1}_{k,0}$, we get that
	\begin{align*}
	&k\  \sum_{1\leq l,l'\leq k} \E_k\left[ \ts_{kN}\otimes\ts_{kN} \Big( \partial_i D_i Q(Z_k) P_{l',l}\otimes P_{l,l'} \Big) \right]  - \E_k[\ts_{kN}] \otimes \E_k[\ts_{kN}] \Big( \partial_i D_i Q(Z_k) P_{l',l}\otimes P_{l,l'} \Big) \\
	=&\ k \sum_{1\leq l,l'\leq k} \sum_{1\leq s \leq n} \E_k\left[ \ts_{kN}\Big( \partial_i^1 D_i Q(Z^{1,s}_{k,0}) P_{l',l}\Big)\boxtimes  \ts_{kN}\Big(\partial_i^2 D_i Q(Z^{2,s}_{k,0})P_{l,l'} \Big) \right] \\
	&\quad\quad\quad\quad\quad\quad\quad - \E_k\left[ \ts_{kN}\Big( \partial_i^1 D_i Q(Z^{1,s}_{k,t_s}) P_{l',l}\Big) \boxtimes \ts_{kN}\Big( \partial_i^2 D_i Q(Z^{2,s}_{k,t_s})P_{l,l'} \Big) \right] \\
	&\quad\quad\quad\quad + \E_k\left[ \ts_{kN}\Big( \partial_i^1 D_i Q(Z^{1}_{k,0}) P_{l',l}\Big) \boxtimes \ts_{kN}\Big( \partial_i^2 D_i Q(Z^{2}_{k,0})P_{l,l'} \Big) \right] \\
	&\quad\quad\quad\quad\quad - \E_k\left[ \ts_{kN}\Big( \partial_i^1 D_i Q(Z^{1}_{k,t}) P_{l',l}\Big) \boxtimes \ts_{kN}\Big( \partial_i^2 D_i Q(Z^{2}_{k,t})P_{l,l'} \Big) \right] \\
	=&\ \frac{1}{k^2 N^3}\sum_{1\leq l,l'\leq k} \sum_{\substack{1\leq s \leq n,\\ 1\leq j\leq d_s}} \int_0^{t_s} e^{-r}\ \E_k\Big[ \tr_{kN}\Big( \left(\partial_{s,j}\left( \partial_i^1 D_i Q\right)(Z_{k,r}^{1,s})\widetilde{\#} P_{l',l}\right) \boxtimes \left(\partial_{s,j}\left(\partial_i^2 D_i Q\right)(Z^{2,s}_{k,r})\widetilde{\#} P_{l,l'}\right) \Big) \Big] dr \\
	&\quad\quad\quad\quad\quad + \sum_{1\leq j\leq d} \int_0^{t} e^{-r}\ \E_k\Big[ \tr_{kN}\Big(  \left(\partial_{j}\left( \partial_i^1 D_i Q\right)(Z_{k,r}^{1})\widetilde{\#} P_{l',l}\right) \boxtimes \left(\partial_{j}\left(\partial_i^2 D_i Q\right)(Z^{2}_{k,r})\widetilde{\#} P_{l,l'}\right) \Big) \Big] dr,
	\end{align*}
	
	\noindent where $\widetilde{\#}$ is as in \eqref{3defperdu} and $\partial_{s,j}$ is defined similarly to the noncommutative differential introduced in Definition \ref{3application} but with respect to $(1-e^{-t_s})^{1/2}y_{s,j}$ instead of $X_i$. In order to illustrate the previous formula, we give the following example. For $Q\in\PP_{d_1+\dots+d_n+d,q}$ a polynomial, one can write $ \partial_i D_i Q = \sum_m A_m\otimes B_m$ for some monomials $A_m,B_m$, then with $X_{s,j}\in \PP_{d_1+\dots+d_n+d,q}$ such that $X_{s,j}(Z_k)= (1-e^{-t_s})^{1/2}y_{s,j}$, we have that 
	\begin{align*}
		 &\left(\partial_{s,j}\left( \partial_i^1 D_i Q\right)(Z_{k,r}^{1,s})\widetilde{\#} P_{l',l}\right) \boxtimes \left(\partial_{s,j}\left(\partial_i^2 D_i Q\right)(Z^{2,s}_{k,r})\widetilde{\#} P_{l,l'}\right) \\
		 &= \sum_{A=A_1 X_{s,j} A_2,\ B=B_1 X_{s,j} B_2} A_2(Z_{k,r}^{1,s})  P_{l',l} A_1(Z_{k,r}^{1,s}) B_2(Z^{2,s}_{k,r}) P_{l,l'} B_1(Z^{2,s}_{k,r}).
	\end{align*} \\

	\textbf{Step 4:} Besides if $U,V$ are matrices of $\M_N(\C) \otimes \M_k(\C)$, then 
	$$\sum_{1\leq l,l'\leq k} \tr_{kN}(U P_{l',l} V P_{l,l'}) = \tr_{N}( \id_N\otimes \tr_{k}(U) \id_N\otimes\tr_{k}(V)). $$
	Hence given $A,B,C,D\in\F_{d_1+\dots+d_{n}+d,q}$,
	\begin{align*}
	&\frac{1}{k^2N} \sum_{1\leq l,l'\leq k} \E_k\left[ \tr_{kN}\Big( A(Z_{k,r}^{1,s}) P_{l',l} B(Z_{k,r}^{1,s}) C(Z^{2,s}_{k,r}) P_{l,l'} D(Z^{2,s}_{k,r}) \Big) \right] \\
	=&\ \E_k\left[ \ts_N\left( \id_N\otimes\ts_k\left( D(Z^{2,s}_{k,r}) A(Z_{k,r}^{1,s}) \right) \id_N\otimes\ts_k \left( B(Z_{k,r}^{1,s}) C(Z^{2,s}_{k,r}) \right) \right) \right].
	\end{align*}
	
	\noindent If $Q = D(Z^{2,s}_{k,r}) A(Z_{k,r}^{1,s})$ and $T = B(Z_{k,r}^{1,s}) C(Z^{2,s}_{k,r})$, then thanks to Proposition \ref{3concentration},
	\begin{align*}
	&\ts_N\left(\E_k\left[ \id_N\otimes\ts_k(Q) \id_N\otimes\ts_k(T)\right]\right) \\
	&= \frac{1}{Nk^2}\sum_{\substack{1\leq i,j\leq N\\ 1\leq l,l'\leq k}} \E_k\left[ g_i^*\otimes f_l^* Q g_j\otimes f_l \times  g_j^*\otimes f_{l'}^* T g_i^*\otimes f_{l'}^* )\right]\\
	&= \frac{1}{Nk^2}\sum_{\substack{1\leq i,j,m,m'\leq N\\ 1\leq l,l'\leq k}} \E_k\Big[ g_m^*\otimes f_l^*\ Q\times  E_{j,i}\otimes I_k\ g_m\otimes f_l \times  g_{m'}^*\otimes f_{l'}^*\ T\times E_{i,j}\otimes I_k\ g_{m'}\otimes f_{l'} \Big]\\
	&= \frac{1}{Nk^2}\sum_{1\leq i,j\leq N} \E_k\left[ \tr_{kN}(Q E_{j,i}\otimes I_k)\ \tr_{kN}(T E_{i,j}\otimes I_k) \right] \\
	&= \mathcal{O}(k^{-2}) + \frac{1}{Nk^2}\sum_{1\leq i,j\leq N} \E_k\left[ \tr_{kN}(Q E_{j,i}\otimes I_k)\right] \left[ \tr_{kN}(T E_{i,j}\otimes I_k) \right]  \\
	&= \mathcal{O}(k^{-2}) +  \ts_N( \E_k\left[ \id_N\otimes\ts_k(Q)\right] \E_k\left[\id_N\otimes\ts_k(T)\right]).
	\end{align*}
	
	\noindent We can view a GUE matrix of size $kN$ as a matrix of size $N$ with matrix coefficients. The diagonal coefficients are independent GUE matrices of size $k$ multiplied by $N^{-1/2}$. The upper non-diagonal coefficients are independent random matrices of size $k$ which have the same law as $(2N)^{-1/2}(X+\i Y)$ where $X$ and $Y$ are independent GUE matrices of size $k$, and the lower non-diagonal coefficients are the adjoints of the upper coefficients. Thus, if $U^{kN}$ is a family of $l$ independent GUE matrices of size $kN$, $u$ a family of $l$ free semicircular variables, we then define $\mathbf{u}^{N}$ as a family of $l$ matrices of size $N$ whose diagonal coefficients are free semicirculars multiplied by $N^{-1/2}$, and the upper non-diagonal coefficients are free between each other, free from the diagonal one, and they are of the form $(2N)^{-1/2}(a+\i b)$ where $a$ and $b$ are free semicirculars. Finally the lower non-diagonal coefficients are the adjoints of the upper coefficients. We also assume that semicirculars from different matrices are free and that all of those semicirculars live in a $\CC^*$-algebra endowed with a trace $\tau$. Then with $\widetilde{U}^{kN}$ an independent copy of $U^{kN}$, $\widetilde{u}$ a free copy of $u$ and $\mathbf{\widetilde{u}}^{N}$ a free copy of $\mathbf{u}^{N}$, for $L,K\in\F_{l,q}$,
	\begin{align*}
	&\lim\limits_{k\to \infty} \ts_{N}\left(\E_k\left[ \id_N\otimes\ts_k(L(U^{kN},Z^N\otimes I_k))\right] \E_k\left[\id_N\otimes\ts_k(K(U^{kN},Z^N\otimes I_k))\right]\right) \\
	&= \ts_{N}\left( \id_N\otimes \tau(L(\mathbf{u}^{N},Z^N)) \times \id_N\otimes \tau(K(\mathbf{u}^{N},Z^N))\right) \\
	&= \ts_{N}\otimes \tau\left( L(\mathbf{u}^{N},Z^N) \times K(\mathbf{\widetilde{u}}^{N},Z^N)\right) \\
	&= \lim\limits_{k\to \infty} \E_k\left[\ts_{kN}\left( L(U^{kN},Z^N\otimes I_k) \times K(\widetilde{U}^{kN},Z^N\otimes I_k) \right)\right] \\
	&= \tau_{N}\left( L(u,Z^N) \times K(\widetilde{u},Z^N) \right)
	\end{align*}	
	
	\noindent Consequently, we have that 
	\begin{align*}
	&\lim_{k\to \infty} \frac{1}{k^2N} \sum_{1\leq l,l'\leq k} \E_k\left[ \tr_{kN}\Big( A(Z_{k,r}^{1,s}) P_{l',l} B(Z_{k,r}^{1,s}) C(Z^{2,s}_{k,r}) P_{l,l'} D(Z^{2,s}_{k,r}) \Big) \right] \\
	&= \tau_N\left( A(z_{r}^{1,s}) B(\widetilde{z}_{r}^{1,s}) C(\widetilde{z}^{2,s}_{r}) D(z^{2,s}_{r}) \right).
	\end{align*}
	
	\noindent In particular this implies
	\begin{align*}
	&\lim_{k\to \infty} \frac{1}{k^2 N}\sum_{1\leq l,l'\leq k} \E_k\Big[ \tr_{kN}\Big( \left(\partial_{s,j}\left( \partial_i^1 D_i Q\right)(Z_{k,r}^{1,s})\widetilde{\#} P_{l',l}\right) \boxtimes \left(\partial_{s,j}\left(\partial_i^2 D_i Q\right)(Z^{2,s}_{k,r})\widetilde{\#} P_{l,l'}\right) \Big) \Big] \\
	&= \tau_{N}\Big( \Big(\partial_{s,j}^2\left( \partial_i^1 D_i Q\right)(z_{r}^{1,s}) \boxtimes \partial_{s,j}^1 \left( \partial_i^1 D_i Q\right)(\widetilde{z}_{r}^{1,s}) \Big) \\
	&\quad\quad\quad \boxtimes \Big(\partial_{s,j}^2\left( \partial_i^2 D_i Q\right)(\widetilde{z}^{2,s}_{r}) \boxtimes \partial_{s,j}^1 \left( \partial_i^2 D_i Q\right)(z^{2,s}_{r}) \Big) \Big).
	\end{align*}
	
	\noindent Which in turn means that $\Lambda_{N,t}$ is equal to
	\begin{align*}
	-\frac{1}{N^2}\sum_{\substack{1\leq s \leq n,\\ 1\leq j\leq d_s}} \int_0^{t_s} e^{-r}\ \tau_{N}\Big(& \Big(\partial_{s,j}^2\left( \partial_i^1 D_i Q\right)(z_{r}^{1,s}) \boxtimes \partial_{s,j}^1 \left( \partial_i^1 D_i Q\right)(\widetilde{z}_{r}^{1,s}) \Big) \\
	& \boxtimes \Big(\partial_{s,j}^2\left( \partial_i^2 D_i Q\right)(\widetilde{z}^{2,s}_{r}) \boxtimes \partial_{s,j}^1 \left( \partial_i^2 D_i Q\right)(z^{2,s}_{r}) \Big) \Big) dr \\
	-\frac{1}{N^2}\sum_{1\leq j\leq d} \int_0^{t} e^{-r}\ \tau_{N}\Big(& \Big(\partial_{j}^2\left( \partial_i^1 D_i Q\right)(z_{r}^{1}) \boxtimes \partial_{j}^1 \left( \partial_i^1 D_i Q\right)(\widetilde{z}_{r}^{1}) \Big) \\
	& \boxtimes \Big(\partial_{j}^2\left( \partial_i^2 D_i Q\right)(\widetilde{z}^{2}_{r}) \boxtimes \partial_{j}^1 \left( \partial_i^2 D_i Q\right)(z^{2}_{r}) \Big) \Big) dr.
	\end{align*}
	
	\noindent Thus, by using this result in Equation \eqref{3vantlmbd}, we have in conclusion
	\begin{align*}
	&\E\Bigg[ \frac{d}{dt}\tau_N\Big(Q\left(Y_{t}^N\right)\Big) \Bigg] \\
	&= - \frac{e^{-t}}{2N^2} \sum_{\substack{1\leq i\leq d\\1\leq s \leq n,\\ 1\leq j\leq d_s} }\ \int_0^{t_s} e^{-r}\ \E\Big[ \tau_{N}\Big( \Big(\partial_{s,j}^2\left( \partial_i^1 D_i Q\right)(z_{r}^{1,s}) \boxtimes \partial_{s,j}^1 \left( \partial_i^1 D_i Q\right)(\widetilde{z}_{r}^{1,s}) \Big) \\
	&\quad\quad\quad\quad\quad\quad\quad\quad\quad\quad\quad\quad \boxtimes \Big(\partial_{s,j}^2\left( \partial_i^2 D_i Q\right)(\widetilde{z}^{2,s}_{r}) \boxtimes \partial_{s,j}^1 \left( \partial_i^2 D_i Q\right)(z^{2,s}_{r}) \Big) \Big) \Big] dr\\
	&\quad- \frac{e^{-t}}{2N^2} \sum_{1\leq i,j\leq d} \int_0^{t} e^{-r}\ \E\Big[ \tau_{N}\Big( \Big(\partial_{j}^2\left( \partial_i^1 D_i Q\right)(z_{r}^{1}) \boxtimes \partial_{j}^1 \left( \partial_i^1 D_i Q\right)(\widetilde{z}_{r}^{1}) \Big) \\
	&\quad\quad\quad\quad\quad\quad\quad\quad\quad\quad\quad\quad \boxtimes \Big(\partial_{j}^2\left( \partial_i^2 D_i Q\right)(\widetilde{z}^{2}_{r}) \boxtimes \partial_{j}^1 \left( \partial_i^2 D_i Q\right)(z^{2}_{r}) \Big) \Big) \Big] dr.
	\end{align*}
	
\end{proof}

\subsection{Proof of Theorem \ref{3lessopti}}

\label{3technical}

In this section we focus on proving Theorem \ref{3lessopti} from which we deduce all of the important corollaries. It will mainly be a corollary of the following theorem, which is slightly stronger but less explicit. We refer to  Lemma \ref{3apparition} for the definition of $L^{T_{i}}$ and $x^{T_i}$, and to Proposition \ref{3intercoef} for the one of $A_i$. To fully understand how the coefficients $\alpha_i^P(f,Z^N)$ are built we also refer to those Propositions. Some explicit computations on how to compute $\alpha_1^P(f,Z^N)$ for $P=X_1$ can be found in Example \ref{3exe33}.

\begin{theorem}
	\label{3TTheo}
	Let the following objects be given,
	\begin{itemize}
		\item $X^N = (X_1^N,\dots,X_d^N)$ independent $GUE$ matrices of size $N$,
		\item $Z^N = (Z_1^N,\dots,Z_q^N)$ deterministic matrices and their adjoints,
		\item $P\in \PP_{d,q}$ a polynomial that we assume to be self-adjoint,
		\item $f:\R\mapsto\R$ such that there exists a complex-valued measure on the real line $\mu$ with $$\int (1+y^{4(k+1)})\  d|\mu|(y)\ < +\infty$$ and for any $x\in\R$,
		\begin{equation}
		\label{3hypoth}
		f(x) = \int_{\R} e^{\i x y}\ d\mu(y) .
		\end{equation}
		
	\end{itemize}
	
	\noindent Then with notations as in Lemma \ref{3apparition} and Proposition \ref{3intercoef} if we set,
	\begin{equation}
		\label{exprdescoeff}
		\alpha_i^P(f,Z^N) = \int_{\R} \int_{A_i} \int_{[0,1]^{4i}} \tau_N\Big( \left(L^{{T}_i}_{\rho_i,\beta_i,\gamma_i,\delta_i} \dots L^{{T}_1}_{\rho_1,\beta_1,\gamma_1,\delta_1}\right)(e^{\i y P}) (x^{{T}_i},Z^N) \Big)\  d\rho\, d\beta\, d\gamma\, d\delta\ dt\ d\mu(y), 
	\end{equation} 
	
	\noindent and that we write $P = \sum_{1\leq i\leq Nb(P)} c_i M_i$ where the $M_i$ are monomials and $c_i\in\C$ (i.e. $P$ is a a sum of at most $Nb(P)$ monomials), if we set $C_{\max}(P) = \max \{1, \max_i |c_i|\}$, then there exist constants $C,K$ and $c$ independent of $P$ such that with $K_N = \max \{ \norm{Z^N_1}$ $,\dots, \norm{Z^N_q}, K\}$, for any $N$ and $k\leq cN (\deg P)^{-1}$,
	\begin{align}
	\label{3mainresu}
	&\left| \E\left[ \ts_{N}\Big(f(P(X^N,Z^N))\Big)\right] - \sum_{0\leq i\leq k} \frac{1}{N^{2i}} \alpha_i^P(f,Z^N) \right| \\
	&\leq \frac{1}{N^{2k+2}} \int_{\R} (|y|+ y^{4(k+1)}) d|\mu|(y) \times \Big(C\times K_N^{\deg P} C_{\max}(P) Nb(P)(\deg P)^2\Big)^{4(k+1)}\times k^{3k} . \nonumber
	\end{align}
	
	\noindent Besides, if we define $\widehat{K}_N$ like $K_N$ but with $2$ instead of $K$, then we have that for any $j\in\N^*$,
	\begin{equation}
	\label{3mainresu2}
	\left| \alpha_j^P(f,Z^N) \right| \leq \int_{\R} (|y|+ y^{4j}) d|\mu|(y) \times \Big(C\times \widehat{K}_N^{\deg P} C_{\max}(P) Nb(P)(\deg P)^2\Big)^{4j}\times j^{3j} .
	\end{equation}
	Finally if $f$ and $g$ both satisfy \eqref{3hypoth} for some complex measures $\mu_f$ and $\mu_g$, then if they are bounded functions equal on a neighborhood of the spectrum of $P(x,Z^N)$, where $x$ is a free semicircular system free from $\M_N(\C)$, then for any $i$, $\alpha_i^P(f,Z^N) = \alpha_i^P(g,Z^N)$. In particular if $f$ is a bounded function such that its support and the spectrum of $P(x,Z^N)$ are disjoint, then for any $i$, $\alpha_i^P(f,Z^N)=0$.
	
\end{theorem}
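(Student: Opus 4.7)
The plan is to use the Fourier representation $f(x) = \int e^{\i xy}\, d\mu(y)$ to reduce the problem to estimating $\E[\tau_N(e^{\i y P(X^N, Z^N)})]$ for each $y$, and then integrate against $\mu$ at the end. The key tool is Lemma \ref{3imp2} taken with $n=0$: for any $Q \in \F_{d,q}$ it yields
\begin{equation*}
\E[\tau_N(Q(X^N, Z^N))] - \tau_N(Q(x, Z^N)) = \frac{1}{N^2}\, R_1(Q),
\end{equation*}
where $R_1(Q)$ is an explicit integral over parameters $(t, r, \alpha, \beta, \gamma, \delta) \in A_1$ of a trace of a new element of $\F^1_{d,q}$, evaluated at enlarged families of free semicirculars and matrices. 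Applied with $Q = e^{\i y P}$ and integrated against $d\mu(y)$, the leading term yields $\alpha_0^P(f, Z^N) = \tau_N(f(P(x, Z^N)))$ via the continuous functional calculus.

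The proof then proceeds by induction on $k$: we apply Lemma \ref{3imp2} again to the element of $\F^1_{d,q}$ produced by $R_1(e^{\i y P})$, and iterate. Each iteration peels off one more power of $N^{-2}$, produces the coefficient $\alpha_j^P(f, Z^N)/N^{2j}$, and yields a new remainder in $\F^{j+1}_{d,q}$. The combinatorial device $J_n$ of Definition \ref{3biz} is tailored precisely to this iteration: the recursive construction of $J_{n+1}$ by shifts $+c_n$, $+2c_n$ and the duplication via $\widetilde J$ mirrors exactly the appearance of the four families $z^{1,s}, \widetilde z^{1,s}, z^{2,s}, \widetilde z^{2,s}$ in Lemma \ref{3imp2}, while each $s \in [1,n]$ introduces a fresh system of free semicirculars; Lemma \ref{3detail} ensures that the labelling is consistent. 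After $k+1$ iterations one obtains precisely the formula \eqref{exprdescoeff}, where $L^{T_i}_{\alpha_i,\beta_i,\gamma_i,\delta_i}$ (Lemma \ref{3apparition}) encodes the operator $Q \mapsto$ integrand of $R_1(Q)$ and $A_i$ (Proposition \ref{3intercoef}) is the resulting parameter simplex.

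The main obstacle is the remainder bound. After $k+1$ iterations the remainder involves, schematically, $4(k+1)$ iterated noncommutative derivatives of $e^{\i y P}$ arranged in the composition pattern $(\partial_j^2 \boxtimes \partial_j^1) \circ \partial_i \circ D_i$. Using Proposition \ref{3duhamel}, each differentiation of $e^{\i y P}$ brings in a factor of $|y|$ times a $\partial_i P$ or $D_i P$ inside an integral over $[0,1]$. The noncommutative Leibniz rule applied to $P = \sum c_\ell M_\ell$ generates at most $Nb(P)\,(\deg P)$ terms per differentiation, each carrying a coefficient bounded by $C_{\max}(P)$ and evaluated at elements of norm $\leq K_N$, so a monomial of degree $n$ contributes at most $K_N^n$. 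Combining these with $|y|^{4(k+1)}$ from the exponential derivatives and integrating against $d|\mu|(y)$ introduces $\int (|y| + y^{4(k+1)})\, d|\mu|$. The factor $k^{3k}$ comes from a refined Leibniz count across the $4(k+1)$ layers of differentiation, softened by the simplex integrations $\int_{[0,1]^{4(k+1)}}$; it is here that one uses that iterated derivatives land in $\F_{d,q}^n$ whose index set $J_n$ is controlled via Lemma \ref{3detail}, together with Proposition \ref{3bornenorme} to keep $\E[\norm{X^N}^{4(k+1)\deg P}]$ polynomially bounded as long as $k \leq cN(\deg P)^{-1}$.

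For the estimate \eqref{3mainresu2} on $|\alpha_j^P(f, Z^N)|$, the same bounds are applied directly to the integrand of \eqref{exprdescoeff}, except that now every variable is a free semicircular (of norm $2$) rather than a GUE matrix (of norm $\leq K$); this is the origin of the replacement $K \to 2$ in $\widehat K_N$. For the last assertion, observe that \eqref{exprdescoeff} depends on $f$ only through $\int e^{\i y P(x^{T_i}, Z^N)}\, d\mu(y)$, which by the continuous functional calculus applied to the self-adjoint element $P(x^{T_i}, Z^N)$ equals $f(P(x^{T_i}, Z^N))$. Since the family $(x^{T_i}, Z^N)$ has the same $*$-distribution as $(x, Z^N)$, the two operators have the same spectrum, so whenever $f$ and $g$ agree on a neighborhood of $\sigma(P(x, Z^N))$ one obtains $\alpha_i^P(f, Z^N) = \alpha_i^P(g, Z^N)$; in particular, $\alpha_i^P(f, Z^N) = 0$ whenever $\supp f$ is disjoint from that spectrum.
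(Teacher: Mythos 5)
Your treatment of the expansion itself and of the remainder bound follows the paper's route: iterate Lemma \ref{3imp2} (in the bookkeeping form of Lemma \ref{3apparition} and Proposition \ref{3intercoef}) applied to $Q=e^{\i yP}$, then control the iterated operators by tracking degree, coefficient size and number of monomials, and use Proposition \ref{3bornenorme} for $\E[\|X^N\|^{4(k+1)\deg P}]$ under $k\leq cN(\deg P)^{-1}$. One small correction there: the factorial softening that turns $(4(k+1))!$ into $C^k k^{3k}$ does not come from the integrals over $[0,1]^{4(k+1)}$ (those have total mass $1$); it comes from the time-simplex integral $\int_{A_{k+1}} e^{-\sum_r t_r}\,dt \leq 1/(k+1)!$, the exponential weights being built into the definition \eqref{3fullop} of $L^{T_{n+1}}$.

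The genuine gap is in your proof of the last assertion. You claim that \eqref{exprdescoeff} depends on $f$ only through $\int e^{\i y P(x^{T_i},Z^N)}\,d\mu(y)=f(P(x^{T_i},Z^N))$, but this is false for $i\geq 1$: each application of $L^{T}_{\alpha,\beta,\gamma,\delta}$ differentiates the exponential, so the integrand is a sum of terms of the form $y^{m}\,\tau_N\big(A_1 e^{\i c_1 yP(z^{(1)})}A_2\cdots A_p e^{\i c_p yP(z^{(p)})}\big)$ with $m\leq 4i$, $c_j\geq 0$, $\sum_j c_j=1$, and the partial exponentials evaluated at \emph{different} free families (see Example \ref{3exe33}). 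After integrating in $y$ one obtains multi-operator, divided-difference-type expressions involving the derivatives $f^{(m)}$ evaluated at convex combinations $\sum_j c_j\lambda_j$ of points $\lambda_j$ in the spectra of the $P(z^{(j)},Z^N)$; a priori this only shows dependence on $f$ restricted to the \emph{convex hull} of $\sigma(P(x,Z^N))$, which is strictly weaker than the claimed locality (e.g.\ it gives nothing when $\supp f$ sits in a spectral gap). The paper proves this part by an entirely different argument: assuming some $\alpha_i^P(f,Z^N)\neq 0$ with $\supp f$ disjoint from $\sigma(P(x,Z^N))$, it applies \eqref{3mainresu} to GUE matrices of size $lN$ with coefficients $\alpha_i^P(f,Z^N\otimes I_l)=\alpha_i^P(f,Z^N)$, so that $\E\big[\tau_N\big(f(P(X^{lN},Z^N\otimes I_l))\big)\big]\sim c\,l^{-2k}$, and then contradicts this by showing the left-hand side decays exponentially in $l$, using strong convergence (Theorem 1.6 of \cite{male}) together with concentration of measure for $\norm{g(P(X^{lN},Z^N\otimes I_l))}$ and Proposition \ref{3bornenorme}. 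Your proposal as written does not prove the statements about $\alpha_i^P(f,Z^N)=\alpha_i^P(g,Z^N)$ and the vanishing of the coefficients, and would need to be replaced by an argument of this kind (or a genuinely new one handling the divided-difference structure).
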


Note that it is quite important to be able to assume that $\mu$ can be a complex-valued measure. Indeed, this means that one can use the Fourier inversion formula and thus consider pretty much any functions smooth enough, as we sill see in the proof of Theorem \ref{3lessopti}.

\begin{rem}
\label{produf}
	It is worth noting that if one wanted, one could consider a product of functions $f_i$ evaluated in self-adjoint polynomials $P_i\in\PP_{d,q}$ instead of a single function $f$ evaluated in $P$. Indeed the proof of Theorem \ref{3TTheo} consists in first using Proposition \ref{3intercoef} and then estimating the remainder term. However Proposition \ref{3intercoef} can be used in more general situations. If we asume that for any $i$ and $x\in\R$,
	\begin{equation*}
		f_i(x) = \int_{\R} e^{\i x y}\ d\mu_i(y),
	\end{equation*}
	for some complex-valued measure $\mu_i$. Then given $R_i\in\PP_{d,q}, y_i\in\R$, $Q= e^{\i y_1 P_1}R_1\dots e^{\i y_k P_k}R_k$ belongs to $\F_{d,q}$. Consequently, one can apply Proposition \ref{3intercoef} to $Q$ and since 
	\begin{align*}
		&\E\left[ \ts_N\Big(f_1(P_1(X^N,Z^N))R_1(X^N,Z^N)\dots f_k(P_k(X^N,Z^N))R_k(X^N,Z^N)\Big) \right] \\
		&= \int_{\R^k} \E\left[ \ts_N\Big(Q(X^N,Z^N)\Big) \right] d\mu_1(y_1)\dots d\mu_k(y_k),
	\end{align*}
	one can obtain an asympotic expansion for any products of smooth functions.
	
	One can also study the case where we have a product of traces, to do so we use the Schwinger-Dyson equations to reduce the problem to the case of a single trace. Given matrices $A,B\in\M_N(\C)$, one has thanks to Proposition \ref{3SD}, that with $Y$ a GUE random matrix of size $N$,
	$$ \ts_{N}(A) \ts_{N}(B) = \E\left[ \ts_{N}\left(YAYB\right) \right]. $$
	Consequently, given $Q_1,\dots,Q_k\in\F_{d,q}$, $Y_1^N,\dots,Y_{k-1}^N$ independent GUE random matrices, independent from $X^N$, one has that
	\begin{align*}
		&\E\left[ \ts_N\Big(Q_1(X^N,Z^N))\Big)\dots \ts_N\Big(Q_k(X^N,Z^N)\Big) \right] \\
		&= \E\left[ \ts_N\Big(Y_{k-1}^N\dots Y_1^N Q_1(X^N,Z^N))Y^N_1Q_2(X^N,Z^N)) \dots Y_{k-1}^N Q_k(X^N,Z^N)\Big) \right].
	\end{align*}
	Hence once again one can use Proposition \ref{3intercoef} to get an asymptotic expansion.
\end{rem}

The following lemma allows us to define the coefficients of the topological expansion by induction. It is basically a reformulation of Lemma \ref{3imp2} with the notations of Definitions \ref{3biz} and \ref{3biz2}. Although the notations in this formula are a bit heavy, they are necessary in order to get a better upper bound on the remainder term. It is the first step of the proof of Theorem \ref{3TTheo}.

\begin{lemma}
	\label{3apparition}
	Let $x,x^1,\dots,x^{c_{n}}$ be free semicircular systems of $d$ variables. Then with $T_n = \{t_1,\dots,t_{2n}\}$ a sequence of non-negative number,  $T_n = \{\widetilde{t}_1,\dots,\widetilde{t}_{2n}\}$ the same set but ordered by increasing orders, and $I = \{I_1,\dots,I_{2n}\}\in J_n$, with $t_0=0$, we set
	$$ X_{i,I}^{N,T_{n}} = \sum_{l=1}^{2n} (e^{-\widetilde{t}_{l-1}} -e^{-\widetilde{t}_l})^{1/2} x^{I_{l}}_i + e^{-\widetilde{t}_{2n} /2} X_i^N , $$
	$$ x_{i,I}^{T_{n}} = \sum_{l=1}^{2n} (e^{-\widetilde{t}_{l-1}} -e^{-\widetilde{t}_l})^{1/2} x^{I_{l}}_i + e^{- \widetilde{t}_{2n}/2} x_i . $$
	
	\noindent We define the following subfamily of $(X_{i,I})_{i\in [1,d], I\in J_{n+1}}$,
	$$X_{l,1} = \left(X_{i,I}\right)_{i\in [1,d], I\in J_{n+1}^{l,1}}, X_{l,2} = \left(X_{i,I}\right)_{i\in [1,d],I\in J_{n+1}^{l,2}}, $$
	$$\widetilde{X}_{l,1} = \left(X_{i,I}\right)_{i\in [1,d],I\in \widetilde{J}_{n+1}^{l,1}}, \widetilde{X}_{l,2} = \left(\widetilde{X}_{i,I}\right)_{i\in [1,d],I\in \widetilde{J}_{n+1}^{l,2}}.$$
	
	\noindent Since there is a natural bijection between $J_n$ and $J_{n+1}^{l,1}$, one can evaluate an element of $\F_{d,q}^{n}$ in $(X_{l,1},Z)$ where $Z=(Y_1,\dots,Y_{2r})$ as in Definition \ref{3biz}, and similarly for $X_{l,2},\widetilde{X}_{l,1}$ and $\widetilde{X}_{l,2}$. 
	Then we define the following operators from $\F_{d,q}^{n}$ to $\F_{d,q}^{n+1}$, for $l$ from $1$ to $2n$,
	\begin{align*}
	&L_l^{n,\rho_{n+1},\beta_{n+1},\gamma_{n+1},\delta_{n+1}}(Q) \\
	&:= \frac{1}{2} \sum_{\substack{1\leq i,j\leq d \\ I,J\in J_n\\ \text{such that } I_l=J_l}} \Big(\partial_{\delta_{n+1},j,I}^2\left( \partial_{\beta_{n+1},i}^1 D_{\rho_{n+1},i} Q\right)\left(X_{l,1},Z\right) \boxtimes \partial_{\delta_{n+1},j,I}^1 \left( \partial_{\beta_{n+1},i}^1 D_{\rho_{n+1},i} Q\right)\left(\widetilde{X}_{l,1},Z\right) \Big) \\
	&\quad\quad\quad\quad\quad\quad\quad \boxtimes \Big(\partial_{\gamma_{n+1},j,J}^2\left( \partial_{\beta_{n+1},i}^2 D_{\rho_{n+1},i} Q\right)\left(\widetilde{X}_{l,2},Z\right) \boxtimes \partial_{\gamma_{n+1},j,J}^1 \left( \partial_{\beta_{n+1},i}^2 D_{\rho_{n+1},i} Q\right)\left(X_{l,2},Z\right) \Big). \\
	\end{align*} 
	
	\noindent We also define
	\begin{align*}
	&L_{2n+1}^{n,\rho_{n+1},\beta_{n+1},\gamma_{n+1},\delta_{n+1}}(Q) \\
	&:= \frac{1}{2} \sum_{1\leq i,j\leq d} \Big(\partial_{\delta_{n+1},j}^2\left( \partial_{\beta_{n+1},i}^1 D_{\rho_{n+1},i} Q\right)\left(X_{2n+1,1},Z\right) \boxtimes \partial_{\delta_{n+1},j}^1 \left( \partial_{\beta_{n+1},i}^1 D_{\rho_{n+1},i} Q\right)\left(\widetilde{X}_{2n+1,1},Z\right) \Big) \\
	&\quad\quad\quad\quad\quad \boxtimes \Big(\partial_{\gamma_{n+1},j}^2\left( \partial_{\beta_{n+1},i}^2 D_{\rho_{n+1},i} Q\right)\left(\widetilde{X}_{2n+1,2},Z\right) \boxtimes \partial_{\gamma_{n+1},j}^1 \left( \partial_{\beta_{n+1},i}^2 D_{\rho_{n+1},i} Q\right)\left(X_{2n+1,2},Z\right) \Big). \\
	\end{align*} 
	
	\noindent And finally, if $T_{n+1}$ a set of $2n+2$ numbers, with $\widetilde{T}_{n} = \{\widetilde{t}_1,\dots,\widetilde{t}_{2n}\}$ the set which contains the first $2n$ elements of $T_{n+1}$ but  sorted by increasing order, we set 
	\begin{align}
	\label{3fullop}
	L^{T_{n+1}}_{\rho_{n+1},\beta_{n+1},\gamma_{n+1},\delta_{n+1}}(Q) :=  e^{-t_{2n+2}-t_{2n+1}} \Bigg(& \1_{[\widetilde{t}_{2n},t_{2n+2}]}(t_{2n+1}) L^{n,\rho_{n+1},\beta_{n+1},\gamma_{n+1},\delta_{n+1}}_{2n+1}(Q) \\
	& + \sum_{1\leq l\leq 2n} \1_{[\widetilde{t}_{l-1},\widetilde{t}_l]}(t_{2n+1}) L_l^{n,\rho_{n+1},\beta_{n+1},\gamma_{n+1},\delta_{n+1}}(Q) \Bigg). \nonumber
	\end{align}
	
	\noindent Then, given $Q\in \F_{d,q}^{n}$, 
	
	\begin{align*}
	&\E\left[\tau_N\Big(Q(X^{N,T_n},Z^N)\Big)\right] - \tau_N\Big(Q(x^{T_n},Z^N)\Big)  \\
	&= \int_{\widetilde{t}_{2n}}^{\infty} \int_0^{t_{2n+2}} \int_{[0,1]^4}\tau_N\left( L^{T_{n+1}}_{\rho_{n+1},\beta_{n+1},\gamma_{n+1},\delta_{n+1}}(Q)\left( X^{N,T_{n+1}},Z^N \right) \right)\ d\rho_{n+1} d\beta_{n+1} d\gamma_{n+1} d\delta_{n+1}\ dt_{2n+1} dt_{2n+2}.
	\end{align*}

\end{lemma}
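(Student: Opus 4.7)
\medskip

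The plan is to view Lemma \ref{3apparition} as a purely combinatorial repackaging of Lemma \ref{3imp2}: all the analytic work (Duhamel, Gaussian integration by parts, the $k \to \infty$ matrix-valued free probability step) is already done, and what remains is a careful matching of indices. I would apply Lemma \ref{3imp2} with its integer $n$ replaced by $2n$, taking the $2n$ parameters there to be the reordered times $\widetilde{t}_1 < \dots < \widetilde{t}_{2n}$ and taking the "outer" time $t$ of Lemma \ref{3imp2} to be the new integration variable $t_{2n+2}$.

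First I would identify the free semicircular systems. The family $y_s$ of Lemma \ref{3imp2} corresponds to the unique copy $x^{I_l}$ that sits at position $l$ across all $I\in J_n$, existence and uniqueness of which is precisely the content of Lemma \ref{3detail} (the depth function). Under this identification, the sequences $z_r^{1,s}, z_r^{2,s}, \widetilde{z}_r^{1,s}, \widetilde{z}_r^{2,s}$ appearing on the right-hand side of \eqref{3nececpour} are exactly what one gets by applying to the formula for $X^{N,T_n}$ the substitutions that, at the level of indices, replace the relevant copies by fresh independent copies. These fresh copies are encoded in Definition \ref{3biz} by the shifts $+c_n, +2c_n$ and by the new labels $3c_n+1, 3c_n+2, 3c_n+3$ — with the tilde versions supplying the two further independent sequences of copies that Lemma \ref{3imp2} introduces (namely $\widetilde{z}^{1,s}_r$ and $\widetilde{z}^{2,s}_r$ on the top line and $z^{2,s}_r$ on the bottom line). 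One then checks that the four families $X_{l,1}, \widetilde X_{l,1}, \widetilde X_{l,2}, X_{l,2}$ in Lemma \ref{3apparition} realise exactly the four sequences $z^{1,s}_r, \widetilde z^{1,s}_r, \widetilde z^{2,s}_r, z^{2,s}_r$ at the index level, with the restriction $I_l=J_l$ in the definition of $L_l^{n,\alpha_n,\beta_n,\gamma_n,\delta_n}$ expressing the fact that in Lemma \ref{3imp2} the two differentiations $\partial_{s,j}$ hit the same semicircular at position $s$.

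Second I would translate the integrals. The outer integral $\int_0^\infty e^{-t} dt$ of Lemma \ref{3imp2} becomes $\int_{\widetilde t_{2n}}^\infty dt_{2n+2}$ (with the factor $e^{-t_{2n+2}}$ absorbed into \eqref{3fullop}); the lower bound $\widetilde t_{2n}$ appears because $t_{2n+2}$ is required to exceed all previously used times in order for the distributional identity $X^{N,T_{n+1}}$ to make sense. The sum $\sum_{s=1}^{n}\int_0^{t_s} e^{-r}\,dr\,(\cdots)$ together with the "extra" term $\sum_{j=1}^{d}\int_0^{t} e^{-r}\,dr\,(\cdots)$ is rewritten as a single integral $\int_0^{t_{2n+2}} e^{-t_{2n+1}}\,dt_{2n+1}$ multiplied by the piecewise-constant sum of operators $\sum_{l=1}^{2n} \mathbf{1}_{[\widetilde t_{l-1},\widetilde t_l]}(t_{2n+1}) L_l^{n,\dots} + \mathbf{1}_{[\widetilde t_{2n},t_{2n+2}]}(t_{2n+1}) L_{2n+1}^{n,\dots}$, exactly matching \eqref{3fullop}. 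Finally, the four $[0,1]$-integrations $\int d\alpha_n d\beta_n d\gamma_n d\delta_n$ come from turning each appearance of $D_i$ and $\partial_i$ in the right-hand side of \eqref{3nececpour} into its Definition \ref{3technicality} form (via Proposition \ref{3duhamel}), which is the very reason the operators $\partial_{\alpha_n,i}, D_{\beta_n,i}, \partial_{\gamma_n,j,J}, \partial_{\delta_n,j,I}$ carry four separate parameters.

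The main obstacle, and really the only nontrivial point, is keeping the combinatorics straight: one must verify that the recursive structure of $J_n$ with growth $c_{n+1} = 6c_n + 6$ accommodates exactly the proliferation of fresh copies produced by Lemma \ref{3imp2}, namely three copies for each of the unbracketed and bracketed (tilde) families, or equivalently six per step plus the six "boundary" labels $3c_n+1, 3c_n+2, 3c_n+3$ and their tilde partners. This is the very heuristic explained in the remark following Definition \ref{3biz}, and once it is written out carefully on a single step it becomes a bookkeeping check; in particular the derivative $\partial_{s,j}$ of Lemma \ref{3imp2}, which differentiates with respect to a single semicircular system, becomes under the bijection the derivative $\partial_{\delta_n,j,I}$ of Definition \ref{3biz2}, and the identity $c_{n+1}=6c_n+6$ guarantees that all new indices introduced on the right-hand side land inside $[1,c_{n+1}]$. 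Once these identifications are made, the asserted equality follows directly from \eqref{3nececpour}.
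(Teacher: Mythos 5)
Your proposal follows the paper's own route: the paper also proves this lemma by rewriting $Q(X^{N,T_n},Z^N)$ as an evaluation of an auxiliary element $S\in\F_{d(c_n+1),q}$, applying Lemma \ref{3imp2} with $2n$ interpolation families obtained by grouping the copies $x^s$ according to their depth (Lemma \ref{3detail}), converting the derivatives $\partial_{l,g}$ into sums of $\partial_{j,I}$ with the constraint $I_l=J_l$, and then changing variables in the time integrals. So the strategy, the role of Lemma \ref{3detail}, the identification of the four evaluation families with $z_r^{1,s},\widetilde z_r^{1,s},\widetilde z_r^{2,s},z_r^{2,s}$, and the origin of the $[0,1]^4$ integrations are all as in the paper.

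There is, however, one concrete misstep, and it sits exactly where the content of the lemma lies. You take the parameters $t_s$ of Lemma \ref{3imp2} to be the reordered times $\widetilde t_1<\dots<\widetilde t_{2n}$ themselves, and the outer time $t$ to be $t_{2n+2}$. This does not match the coefficients: Lemma \ref{3imp2} interpolates with $(1-e^{-t_s})^{1/2}y_s$, whereas $X^{N,T_n}$ carries $(e^{-\widetilde t_{l-1}}-e^{-\widetilde t_l})^{1/2}x^{I_l}=e^{-\widetilde t_{l-1}/2}(1-e^{-(\widetilde t_l-\widetilde t_{l-1})})^{1/2}x^{I_l}$. One must therefore apply Lemma \ref{3imp2} with the \emph{increments} $t_s=\widetilde t_l-\widetilde t_{l-1}$ (and outer time $t=t_{2n+2}-\widetilde t_{2n}$), absorbing the leftover factors $e^{-\widetilde t_{l-1}/2}$, $e^{-\widetilde t_{2n}/2}$ into the auxiliary element $S$; otherwise the interpolated families $z_r^{1,s}$ do not realise $X^{N,T_{n+1}}$ for the inserted time $t_{2n+1}$, and the intervals $[\widetilde t_{l-1},\widetilde t_l]$ in \eqref{3fullop} do not appear (your own description of the inner integral implicitly uses the increment choice, so the proposal is internally inconsistent on this point). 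Relatedly, the weight $e^{-t_{2n+1}-t_{2n+2}}$ is not just the factor $e^{-t}e^{-r}$ of Lemma \ref{3imp2} rewritten: after the change of variables it equals $e^{-t-r}e^{-\widetilde t_{2n}-\widetilde t_{l-1}}$, and the missing $e^{-\widetilde t_{2n}-\widetilde t_{l-1}}$ comes from the chain-rule rescalings $\partial_i D_i S=e^{-\widetilde t_{2n}}\,\partial_i D_i Q$ and $\partial_{l,g}\mapsto e^{-\widetilde t_{l-1}/2}\sum_{I:\,I_l=s}\partial_{j,I}$, which your argument never accounts for. Finally, a smaller imprecision: the family at position $l$ is not ``the unique copy $x^{I_l}$'' but the collection of \emph{all} copies $x^s$ with $\dep^n(s)=l$ (Lemma \ref{3detail} only says each such $s$ occupies position $l$ in every $I$ containing it), which is why Lemma \ref{3imp2} is invoked with families $y_l$ of size $d_l$ and why the sum over pairs $I,J$ with $I_l=J_l$ appears. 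With these corrections the bookkeeping closes exactly as you describe.
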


\begin{proof}	
	\noindent With the notations of Lemma \ref{3imp2}, if we set $ y_l = (x^s)_{\{s | \dep^{n}(s)=l\}}$, then let $S\in \F_{d(c_{n}+1),q}$ be such that
	\begin{align*}
	&S\left( \left(1-e^{-\left(\widetilde{t}_1-\widetilde{t}_0\right)}\right)^{1/2}y_1,\dots,\left(1-e^{-\left(\widetilde{t}_{2n}-\widetilde{t}_{2n-1}\right)}\right)^{1/2}y_{2n},X^N, Z^N \right) \\
	&= Q\left( \left(\sum_{l=1}^{2n}e^{-\widetilde{t}_{l-1}/2} \left(1 -e^{-(\widetilde{t}_l-\widetilde{t}_{l-1})}\right)^{1/2} x^{I_{l}}_i + e^{-\widetilde{t}_{2n} /2} X_i^N\right)_{1\leq i\leq d, I\in J_n} , Z^N\right) \\
	&= Q\left(\left(X_{i,I}^{N,T_{n}}\right)_{1\leq i\leq d, I\in J_n},Z^N\right)
	\end{align*}
	
	\noindent Consequently 
	\begin{equation}
		\label{derivpolydif}
		\partial_i D_i S = e^{-\widetilde{t}_{2n}}\ \partial_i D_i Q,
	\end{equation}
	where on the left side, since $S\in\F_{d(c_{n}+1),q}$, we used the noncommutative differential defined in Lemma \ref{3imp2}, whereas on the right side, since $Q\in\F_{d,q}^{n}$, we used the noncommutative differential defined in Definition \ref{3biz2}. Thus, with the convention of Lemma \ref{3detail} , we set $d_l$ to be $d$ times the number of $s\in [1,c_n]$ which have depth $l$ in $J_{n}$, then
	\begin{align*}
	&\E\left[\tau_N\Big(S\left(Y^N\right)\Big)\right] - \tau_N\Big(S\left(Y\right)\Big) \\
	&=  \frac{1}{2N^2} \int_0^{\infty} e^{-t} \sum_{\substack{1\leq i\leq d\\1\leq l \leq 2n,\\ 1\leq g\leq d_l} }\ \int_0^{\widetilde{t}_l-\widetilde{t}_{l-1}} e^{-r}\ \E\Big[ \tau_{N}\Big( \Big(\partial_{l,g}^2\left( \partial_i^1 D_i S\right)(z_{r}^{1,l}) \boxtimes \partial_{l,g}^1 \left( \partial_i^1 D_i S\right)(\widetilde{z}_{r}^{1,l}) \Big) \\
	&\quad\quad\quad\quad\quad\quad\quad\quad\quad\quad\quad\quad\quad\quad\quad\quad\quad\quad\quad \boxtimes \Big(\partial_{l,g}^2\left( \partial_i^2 D_i S\right)(\widetilde{z}^{2,l}_{r}) \boxtimes \partial_{l,g}^1 \left( \partial_i^2 D_i S\right)(z^{2,l}_{r}) \Big) \Big) \Big] dr\ dt \\
	&\quad +  \frac{1}{2N^2} \int_0^{\infty} e^{-t} \sum_{1\leq i,j\leq d }\ \int_0^{t} e^{-r}\ \E\Big[ \tau_{N}\Big( \Big(\partial_{j}^2\left( \partial_i^1 D_i S\right)(z_{r}^{1}) \boxtimes \partial_{j}^1 \left( \partial_i^1 D_i S\right)(\widetilde{z}_{r}^{1}) \Big) \\
	&\quad\quad\quad\quad\quad\quad\quad\quad\quad\quad\quad\quad\quad\quad\quad\quad\quad\quad \boxtimes \Big(\partial_{j}^2\left( \partial_i^2 D_i S\right)(\widetilde{z}^{2}_{r}) \boxtimes \partial_{j}^1 \left( \partial_i^2 D_i S\right)(z^{2}_{r}) \Big) \Big) \Big] dr\ dt .
	\end{align*}

	\noindent By definition, for any $g\in[1,d_l]$, there exist a unique $j\in [1,d]$ and $s\in[1,c_n]$ with $\dep^n(s)=l$, such that $\partial_{l,g}$ is the differential with respect to $(1-e^{-(\widetilde{t}_l-\widetilde{t}_{l-1})})^{1/2}x^s_{j}$. Consequently for $T\in\F_{d(c_{n}+1),q}$ and $R\in\F_{d,q}^{n}$ such that
	\begin{align*}
		&T\left( \left(1-e^{-(\widetilde{t}_1-\widetilde{t}_0)}\right)^{1/2}y_1,\dots,\left(1-e^{-(\widetilde{t}_{2n}-\widetilde{t}_{2n-1})}\right)^{1/2}y_{2n},X^N, Z^N \right) = R\left(\left(X_{i,I}^{N,T_{n}}\right)_{1\leq i\leq d, I\in J_n},Z^N\right),
	\end{align*}
	thanks to Lemma \ref{3detail} which guaranty that for any $I\in J_n$, either $s\notin I$ or $I_l=s$, one have that
	\begin{equation}
		\partial_{l,g} T = e^{-\widetilde{t}_{l}/2} \sum_{I\in J_n \text{ such that } I_l=s} \partial_{j,I} R.
	\end{equation}
	
	\noindent Consequently, since $S(Y^N) = Q(X^{N,T_n},Z^N)$ and $S(Y) = Q(x^{T_n},Z^N)$, we set 
	$$X^{N,{T_{n+1}}}_{l,1} = \left(X^{N,T_{n+1}}_I\right)_{I\in J_{n+1}^{l,1}}, X^{N,{T_{n+1}}}_{l,2} = \left(X^{N,T_{n+1}}_I\right)_{I\in J_{n+1}^{l,2}}, $$
	$$\widetilde{X}^{N,{T_{n+1}}}_{l,1} = \left(X^{N,T_{n+1}}_I\right)_{I\in \widetilde{J}_{n+1}^{l,1}}, \widetilde{X}^{N,{T_{n+1}}}_{l,2} = \left(\widetilde{X}^{N,T_{n+1}}_I\right)_{I\in \widetilde{J}_{n+1}^{l,2}}.$$

	\noindent and finally we have that
	
	\begin{align*}
	&\E\left[\tau_N\Big(Q(X^{N,T_n},Z^N)\Big)\right] - \tau_N\Big(Q(x^{T_n},Z^N)\Big)  \\
	&=  \frac{1}{2N^2} \int_0^{\infty} e^{-t-\widetilde{t}_{2n}} \sum_{\substack{1\leq i,j\leq d\\1\leq l \leq 2n}}\ \sum_{\substack{ I,J\in J_n\\\text{such that } I_l=J_l}} \int_0^{\widetilde{t}_l-\widetilde{t}_{l-1}}  e^{-r-\widetilde{t}_{l-1}}\\\
	&\quad \E\Big[ \tau_{N}\Big( \Big(\partial_{j,I}^2\left( \partial_i^1 D_i Q\right)\left(X^{N,\{T_n,r+\widetilde{t}_{l-1},t+\widetilde{t}_{2n}\}}_{l,1},Z^N\right) \boxtimes \partial_{j,I}^1 \left( \partial_i^1 D_i Q\right)\left(\widetilde{X}^{N,\{T_n,r+\widetilde{t}_{l-1},t+\widetilde{t}_{2n}\}}_{l,1},Z^N\right) \Big) \\
	&\quad\quad\quad \boxtimes \Big(\partial_{j,J}^2\left( \partial_i^2 D_i Q\right)\left(\widetilde{X}^{N,\{T_n,r+\widetilde{t}_{l-1},t+\widetilde{t}_{2n}\}}_{l,2},Z^N\right) \boxtimes \partial_{j,J}^1 \left( \partial_i^2 D_i Q\right)\left(X^{N,\{T_n,r+\widetilde{t}_{l-1},t+\widetilde{t}_{2n}\}}_{l,2},Z^N\right) \Big) \Big) \Big] dr\ dt \\
	&\quad +  \frac{1}{2N^2} \int_0^{\infty} e^{-t-\widetilde{t}_{2n}} \sum_{1\leq i,j\leq d }\ \int_0^{t} e^{-r-\widetilde{t}_{2n}}\\
	&\quad\quad\quad \E\Big[ \tau_{N}\Big( \Big(\partial_{j}^2\left( \partial_i^1 D_i Q\right)\left(X^{N,\{T_n,r+\widetilde{t}_{2n},t+\widetilde{t}_{2n}\}}_{n+1,1},Z^N\right) \boxtimes \partial_{j}^1 \left( \partial_i^1 D_i Q\right)\left(\widetilde{X}^{N,\{T_n,r+\widetilde{t}_{2n},t+\widetilde{t}_{2n}\}}_{n+1,1},Z^N\right) \Big) \\
	&\quad\quad\quad\quad\quad \boxtimes \Big(\partial_{j}^2\left( \partial_i^2 D_i Q\right)\left(\widetilde{X}^{N,\{T_n,r+\widetilde{t}_{2n},t+\widetilde{t}_{2n}\}}_{n+1,2},Z^N\right) \boxtimes \partial_{j}^1 \left( \partial_i^2 D_i Q\right)\left(X^{N,\{T_n,r+\widetilde{t}_{2n},t+\widetilde{t}_{2n}\}}_{n+1,2},Z^N\right) \Big) \Big) \Big] dr\ dt. \\ 
	\end{align*}
	
	\noindent Thus, if we set $T_{n+1} = \{ T_n, r,t \}$, after a change of variables, and by Definition \ref{3technicality} and \ref{3operatordef}, we get that
	
	\begin{align*}
	&\E\left[\tau_N\Big(Q(X^{N,T_n},Z^N)\Big)\right] - \tau_N\Big(Q(x^{T_n},Z^N)\Big)  \\
	&=\  \frac{1}{N^2} \sum_{1\leq l \leq 2n} \int_{\widetilde{t}_{2n}}^{\infty} \int_{\widetilde{t}_{l-1}}^{\widetilde{t}_l} \int_{[0,1]^4} \tau_{N}\left(L_l^{n,\rho_{n+1},\beta_{n+1},\gamma_{n+1},\delta_{n+1}}(Q)\left(X^{N,T_{n+1}},Z^N\right)\right)\ d\rho_{n+1}\, d\beta_{n+1}\, d\gamma_{n+1}\, d\delta_{n+1}\ dr\ dt\\
	&\quad +  \frac{1}{N^2} \int_{\widetilde{t}_{2n}}^{\infty} \int_{\widetilde{t}_{2n}}^{t} \int_{[0,1]^4} \tau_N\left(L_{2n+1}^{n,\rho_{n+1},\beta_{n+1},\gamma_{n+1},\delta_{n+1}}(Q)\left(X^{N,T_{n+1}},Z^N\right)\right)\ d\rho_{n+1}\, d\beta_{n+1}\, d\gamma_{n+1}\, d\delta_{n+1}\ dr\ dt. \\ 
	\end{align*}
	
	\noindent Hence the conclusion by renaming $r$ to $t_{2n+1}$ and $t$ to $t_{2n+2}$.
	
\end{proof}

%

\noindent Thus, we get the following proposition by induction.

\begin{prop}
	\label{3intercoef}
	Let $x$ be a free semicircular system, $(x^i)_{i\geq 1 }$ be free semicircular systems free from $x$, and $X^N$ be independent GUE matrices. We define $X^{N,T_n}$ and $x^{T_n}$ as in Lemma \ref{3apparition}, $A_i = \{ t_{2i}\geq t_{2i-2}\geq \dots \geq t_2\geq 0 \}\cap\{\forall s\in [1,i], t_{2s} \geq t_{2s-1} \geq 0\} \subset \R^{2i}$, then for any $Q\in \F_{d,q}$,
	\begin{align*}
	&\E\left[ \ts_N\Big( Q(X^N,Z^N) \Big) \right] \\
	&=\sum_{0\leq i\leq k}\ \frac{1}{N^{2i}} \int_{A_i } \int_{[0,1]^{4i}} \tau_N\Big( \left(L^{{T}_i}_{\rho_i,\beta_i,\gamma_i,\delta_i} \dots L^{{T}_1}_{\rho_1,\beta_1,\gamma_1,\delta_1}\right)(Q) (x^{T_i},Z^N) \Big)\ d\rho\, d\beta\, d\gamma\, d\delta\  dt_1\dots dt_{2i}\ \\
	&\quad + \frac{1}{N^{2(k+1)}} \int_{A_{k+1}} \int_{[0,1]^{4(k+1)}} \\
	&\quad\quad\quad \E\left[\tau_N\Big( \left(L^{{T}_{k+1}}_{\rho_{k+1},\beta_{k+1},\gamma_{k+1},\delta_{k+1}} \dots L^{{T}_1}_{\rho_1,\beta_1,\gamma_1,\delta_1}\right)(Q) (X^{N,T_{k+1}},Z^N) \Big)\right]\  d\rho\, d\beta\, d\gamma\, d\delta\ dt_1\dots dt_{2(k+1)} .
	\end{align*}
	
\end{prop}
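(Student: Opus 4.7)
The plan is to prove the statement by induction on $k$, using Lemma \ref{3apparition} as the single engine. The base case $k=0$ is exactly Lemma \ref{3apparition} applied with $n=0$: since $Q \in \F_{d,q} = \F_{d,q}^0$, $T_0 = \emptyset$ and $\widetilde{t}_0 = 0$, so that $X^{N,T_0} = X^N$, $x^{T_0} = x$, and the integration domain $\{t_2 \geq 0\} \cap \{t_1 \in [0,t_2]\}$ coincides with $A_1$.

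For the induction step, suppose the formula holds at order $k$. The remainder term contains an integrand of the form $\E[\tau_N(Q_{k+1}(X^{N,T_{k+1}}, Z^N))]$ where
$$Q_{k+1} := L^{T_{k+1}}_{\alpha_{k+1},\beta_{k+1},\gamma_{k+1},\delta_{k+1}} \cdots L^{T_1}_{\alpha_1,\beta_1,\gamma_1,\delta_1}(Q).$$
By construction in Lemma \ref{3apparition}, each operator $L^{T_i}$ sends $\F_{d,q}^{i-1}$ into $\F_{d,q}^i$, so $Q_{k+1} \in \F_{d,q}^{k+1}$. I would then apply Lemma \ref{3apparition} to $Q_{k+1}$: this replaces $\E[\tau_N(Q_{k+1}(X^{N,T_{k+1}},Z^N))]$ by $\tau_N(Q_{k+1}(x^{T_{k+1}},Z^N))$ (which, together with the prefactor $1/N^{2(k+1)}$ and the existing integrations, yields precisely the $i=k+1$ summand in the main sum) plus a new error at order $1/N^{2(k+2)}$ involving $L^{T_{k+2}}(Q_{k+1})$ evaluated at $X^{N,T_{k+2}}$. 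Fubini's theorem, justified by the uniform operator-norm bounds on the relevant exponentials and semicirculars, is used throughout to interchange expectation with the outer integrations.

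The only subtle point, and the main bookkeeping I foresee, is verifying that the accumulated integration domain at step $k+1$ really is $A_{k+1}$. Lemma \ref{3apparition} adds the fresh constraints $t_{2k+2} \geq \widetilde{t}_{2k}$ and $t_{2k+1} \in [0,t_{2k+2}]$, where $\widetilde{t}_{2k} = \max\{t_1,\dots,t_{2k}\}$. I claim that on $A_k$ this maximum simplifies to $t_{2k}$: indeed, $A_k$ imposes $t_{2k} \geq t_{2k-2} \geq \cdots \geq t_2 \geq 0$ together with $t_{2s} \geq t_{2s-1}$ for every $s \leq k$, so $t_{2k}$ dominates every other $t_l$ with $l \leq 2k$. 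Hence the new constraints become $t_{2k+2} \geq t_{2k}$ and $t_{2k+1} \in [0,t_{2k+2}]$, which combined with $A_k$ give precisely $A_{k+1}$. This identification closes the induction and completes the proof.
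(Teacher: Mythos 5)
Your proposal is correct and follows essentially the same route as the paper: an induction on $k$ whose base case and inductive step are both instances of Lemma \ref{3apparition} (applied with $n=0$ and $n=k+1$ respectively, using that $L^{T_{i}}$ maps $\F_{d,q}^{i-1}$ into $\F_{d,q}^{i}$), together with the observation that on $A_k$ one has $\widetilde{t}_{2k}=t_{2k}$, so the new constraints $t_{2k+2}\geq t_{2k}$, $t_{2k+2}\geq t_{2k+1}\geq 0$ turn $A_k\times\R^2$ into $A_{k+1}$. Your domain bookkeeping and the Fubini remark match (indeed slightly elaborate on) the paper's argument, so there is nothing to correct.
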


\begin{proof}
	For $k=0$, we only need to apply Lemma \ref{3apparition} with $n=0$. Then if the formula is true for $k-1$, then since $\left(L^{{T}_{k}}_{\rho_{k},\beta_{k},\gamma_{k},\delta_{k}} \dots L^{{T}_1}_{\rho_1,\beta_1,\gamma_1,\delta_1}\right)(Q)$ is an element of $\F_{d,q}^k$, one can use Lemma \ref{3apparition} with $n=k$. Besides, for any $T_k=\{t_1,\dots,t_{2k}\}\in A_k$, we have $\widetilde{t}_{2k} = t_{2k}$. Hence the conclusion with the fact that
	$$ A_{k+1} = \{A_k\times\R^2\} \cap \{t_{2k+2}\geq t_{2k}, t_{2k+2}\geq t_{2k+1}\} .$$
\end{proof}

Before giving the proof of Theorem \ref{3TTheo}, as mentioned in the introduction, the former proposition gives some insight in map enumeration.

\begin{rem}
	\label{3map}
	We say that a graph on a surface is a map if it is connected and its faces are homeomorphic to discs. It is of genus $g$ if it can be embedded in a surface of genus $g$ but not $g-1$. For an edge-colored graph on an orientated surface we say that a vertex is of type $q=X_{i_1}\dots X_{i_p}$ if it has degree $p$ and when we look at the half-edges going out of it, starting from a distinguished one and going in the clockwise order the first half-edge is of color $i_1$, the second $i_2$, and so on. If $\mathcal{M}_g(X_{i_1}\dots X_{i_p})$ is the number of such maps of genus $g$ with a single vertex of type $q$, then given $X_i^N$ independent GUE matrices
	$$ \E\left[ \frac{1}{N}\tr_{N}\left( X_{i_1}^N\dots X_{i_p}^N \right) \right] = \sum_{g\in \N} \frac{1}{N^{2g}} \mathcal{M}_g(X_{i_1}\dots X_{i_p}) .$$
	
	\noindent For a proof we refer to \cite{harerzag} for the one matrix case and \cite[Chapter 22]{nica_speicher_2006}, for the multimatrix case. Thanks to Proposition \ref{3intercoef}, we immediately get that
	$$ \mathcal{M}_g(X_{i_1}\dots X_{i_p}) = \int_{A_g} \int_{[0,1]^{4g}} \tau\Big( \Big(L^{T_g}_{\rho_g,\beta_g,\gamma_g,\delta_g}\dots L^{T_1}_{\rho_1,\beta_1,\gamma_1,\delta_1}\Big)\left(X_{i_1}\dots X_{i_p}\right) (x^{T_g}) \Big)\  d\rho\, d\beta\, d\gamma\, d\delta\ dt_1\dots dt_{2g} .$$
	Note in particular that $L^{T_g}_{\rho_g,\beta_g,\gamma_g,\delta_g}\dots L^{T_1}_{\rho_1,\beta_1,\gamma_1,\delta_1}(X_{i_1}\dots X_{i_p})$ does not depend on $\rho,\beta,\gamma,\delta$. Indeed, $q$ is a polynomial, and hence as we can see in Definition \ref{3technicality}, the noncommutative differentials do not depend on $\rho,\beta,\gamma,\delta$. Consequently if one defines $L^{T_l}$ similarly to $L^{T_l}_{\rho_l,\beta_l,\gamma_l,\delta_l}$ but with $	\partial_{j,I}, \partial_{j,J}, \partial_{i}, D_{i}$ instead of $\partial_{\delta_l,j,I}, \partial_{\gamma_l,j,J}, \partial_{\beta_l,i}, D_{\rho_l,i}$, then 
	$$ \mathcal{M}_g(X_{i_1}\dots X_{i_p}) = \int_{A_g} \tau\Big( \Big(L^{T_g}\dots L^{T_1}\Big)\left(X_{i_1}\dots X_{i_p}\right) (x^{T_g}) \Big)\ dt_1\dots dt_{2g} .$$
		
\end{rem}

\begin{proof}[Proof of Theorem \ref{3TTheo}]
	
	The proof will be divided in two parts, first we prove Equations \eqref{3mainresu}, then we will prove the properties of the coefficients $\alpha_i^P(f,Z^N)$ that we listed in Theorem \ref{3TTheo}. 
	
	\textbf{Part 1:} Thanks to Proposition \ref{3intercoef}, we immediately get that 
	\begin{align*}
	\E\Big[ \ts_{N}&\Big(f(P(X^N,Z^N))\Big)\Big] = \sum_{0\leq i\leq k} \frac{1}{N^{2i}} \alpha_i^P(f,Z^N) \\
	&+ \frac{1}{N^{2(k+1)}} \int_{\R} \int_{A_{k+1}} \int_{[0,1]^{4(k+1)}} \E\left[\tau_N\Big( \left(L^{{T}_{k+1}}_{\rho_{k+1},\beta_{k+1},\gamma_{k+1},\delta_{k+1}} \dots L^{{T}_1}_{\rho_1,\beta_1,\gamma_1,\delta_1}\right)(Q) (X^{N,T_{k+1}},Z^N) \Big)\right] \\
	&\quad\quad\quad\quad\quad\quad\quad\quad\quad\quad\quad\quad\quad\quad\quad\quad\quad\quad\quad\quad\quad\quad\quad\quad\quad\quad\quad d\rho\, d\beta\, d\gamma\, d\delta\  dt_1\dots dt_{2(k+1)}\  d\mu(y) .
	\end{align*}
	
	\noindent All we need to do from now on is to get an estimate on the last line. Let $Q\in \F_{d,q}^n$,  we say that $M\in\F^n_{d,q}$ is a monomial if it is a monomial in $X_{i,I},Y_j$ and $\left\{e^{\i R}\ |\ R\in \mathcal{A}_{d,q}^n \text{ self-adjoint}\right\}$, we denote $\deg M$ the length of $M$ as a word in $X_{i,I},Y_j$ and $e^{\i R}$. Then we can write 
	$$ Q = \sum_{1\leq i\leq Nb(Q)} c_i M_i $$
	where $c_i\in\C$ and $M_i\in \F_{d,q}^n$ are monomials (not necessarily distinct). We also define $C_{\max}(Q) = \max \{1, \sup_i |c_i|\}$. Since for any $I\in J_n$, $\norm{X^{N,T_n}_{i,I}} \leq 2 + \norm{X^N_i}$, given 
	$$\mathcal{B} = \left\{ 2 + \norm{X_i^N}\right\}_{1\leq i\leq d} \bigcup \left\{{\norm{Z_j^N}} \right\}_{1\leq j\leq q} ,$$ and $D_N$ the maximum of this family, we get that 
	\begin{equation}
	\label{3majorgross}
	\norm{Q(X^{N,T_{n}},Z^N)} \leq Nb(Q) \times  C_{\max}(Q) \times D_N^{\deg(Q)} .
	\end{equation}
	
	\noindent It is worth noting that this upper bound is not optimal at all and heavily dependent on the decomposition chosen. We also consider $\widetilde{\mathcal{F}}_{d,q}^n$ the subalgebra of ${\mathcal{F}}_{d,q}^n$ generated by $\mathcal{A}_{d,q}^n$ and the family 
	$$\left\{e^{\i \lambda y P((X_{i,I})_{1\leq i\leq d},Y)}\ |\ I\in J_n, \lambda\in[0,1]\right\} .$$
	
	\noindent Then ${L}^{T_{n+1}}_{\rho_n,\beta_n,\gamma_n,\delta_n}$ sends $\widetilde{\mathcal{F}}_{d,q}^n$ to $\widetilde{\mathcal{F}}_{d,q}^{n+1}$. Let $Q\in \widetilde{\mathcal{F}}_{d,q}^n$, then we get that 
	$$\deg\left({L}^{T_{n+1}}_{\rho_n,\beta_n,\gamma_n,\delta_n}(Q)\right) \leq \deg Q + 4\deg P, $$
	$$C_{\max}\left({L}^{T_{n+1}}_{\rho_n,\beta_n,\gamma_n,\delta_n}(Q)\right) \leq e^{-t_{2n+2}-t_{2n+1}} (1+|y|)^4\ C_{\max}(P)^4\ C_{\max}(Q), $$
	\begin{align*}
	Nb\left({L}^{T_{n+1}}_{\rho_n,\beta_n,\gamma_n,\delta_n}(Q)\right) \leq &\deg(Q) (\deg Q + \deg P)(\deg Q + 2\deg P) \\
	&\times (\deg Q + 3\deg P) \times (Nb(P)\deg P)^4 \times Nb(Q) .
	\end{align*}
	
	\noindent Thus, if we define by induction $Q_0 = e^{\i y P}$, and $Q_{n+1} = {L}^{{T}_{n+1}}_{\rho_n,\beta_n,\gamma_n,\delta_n} Q_n $, since $\deg Q_0 = C_{\max}(Q_0) = Nb(Q_0)=1$, by a straightforward induction we get that
	\begin{equation}
	\deg Q_n \leq 4n \deg P +1
	\end{equation}
	\begin{equation}
	\label{3trucamodif}
	C_{\max}(Q_n) \leq e^{- \sum_{r=1}^{2n} t_{r}}  (1+|y|)^{4n}\ C_{\max}(P)^{4n}
	\end{equation}
	\begin{equation}
	Nb(Q_n) \leq \Big(Nb(P)\deg P\Big)^{4n} \prod\limits_{j= 0}^{4n-1} (j\deg P +1) \leq \Big(Nb(P)(\deg P)^2\Big)^{4n} (4n)!
	\end{equation}	
	
	\noindent Actually since we have $D_{\delta_1,i} e^{\i y P} = \i y\ \partial_{\delta_1,i} P \widetilde{\#} e^{\i y P} $, one can replace $(1+|y|)^{4n}$ in Equation \eqref{3trucamodif} by $|y|(1+|y|)^{4n-1}$. Thus, thanks to \eqref{3majorgross}, we get that 
	\begin{align*}
	&\norm{{L}^{T_{k+1}}_{\rho_{k+1},\beta_{k+1},\gamma_{k+1},\delta_{k+1}}\dots {L}^{T_1}_{\rho_1,\beta_1,\gamma_1,\delta_1} Q(X^{N,T_{k+1}},Z^N)} \\
	&\leq e^{- \sum_{r=1}^{2(k+1)} t_{r}}\times \frac{|y|}{1+|y|} \\
	&\quad \times \Big((1+|y|) C_{\max}(P) Nb(P)(\deg P)^2\Big)^{4(k+1)} (4(k+1))! \times D_N^{4(k+1)\deg P\ +1}
	\end{align*}
	
	\noindent Consequently after integrating over $\rho_n,\beta_n,\gamma_n,\delta_n$, we get that
	\begin{align*}
	&\Bigg| \int_{\R} \int_{A_{k+1}} \int_{[0,1]^{4(k+1)}} \E\left[\tau_N\Big( \left(L^{{T}_{k+1}}_{\rho_{k+1},\beta_{k+1},\gamma_{k+1},\delta_{k+1}} \dots L^{{T}_1}_{\rho_1,\beta_1,\gamma_1,\delta_1}\right)(Q) (X^{N,T_{k+1}},Z^N) \Big)\right] \\
	&\quad\quad\quad\quad\quad\quad\quad\quad\quad\quad\quad\quad\quad\quad\quad\quad\quad\quad\quad\quad\quad\quad\quad\quad\quad\quad\quad\quad d\rho\, d\beta\, d\gamma\, d\delta\  dt_1\dots dt_{2(k+1)}\  d\mu(y) \Bigg| \\
	&\leq \int_{A_{k+1}} e^{- \sum_{r=1}^{2(k+1)} t_{r}} dt_1\dots dt_{2k+2} \times \int_{\R} |y|(1+|y|)^{4k+3} d|\mu|(y) \\
	&\quad\quad  \times \Big( C_{\max}(P) Nb(P)(\deg P)^2\Big)^{4(k+1)} (4(k+1))! \times \E\left[ D_N^{4(k+1)\deg P\ +1} \right] .
	\end{align*}
	
	\noindent Besides
	\begin{align*}
	 \int_{A_{k+1}} e^{- \sum_{r=1}^{2(k+1)} t_{r}} dt_1\dots dt_{2k+2} &\leq \int\limits_{0\leq t_2\leq t_4\leq \dots \leq t_{2(k+1)}} \prod_{r=1}^{k+1} e^{-t_{2r}}\quad dt_1\dots dt_{k+1} \\
	&= \frac{1}{(k+1)!}\ ,
	\end{align*}
	
	\noindent and
	\begin{align*}
	\int_{\R} |y|(1+|y|)^{4k+3} d|\mu|(y) \leq 2^{4k+3} \int_{\R} (|y|+ y^{4(k+1)}) d|\mu|(y) .
	\end{align*}
	
	\noindent Thanks to Proposition \ref{3bornenorme} we can find constants $K$ and $c$ such that with 
	$$K_N = \max \{K, \norm{Z_1^N},\dots , \norm{Z_q^N}\} , $$ then for any $k\leq c (\deg P)^{-1} N,$
	$$\E\left[ D_N^{4(k+1)\deg P\ +1} \right] \leq K_N^{4(k+1)\deg P} .$$
	
	\noindent Thus, thanks to Stirling's formula, there exists a constant $C$ such that 
	\begin{align*}
	&\Bigg| \int_{\R} \int_{A_{k+1}} \int_{[0,1]^{4(k+1)}} \E\left[\tau_N\Big( \left(L^{{T}_{k+1}}_{\rho_{k+1},\beta_{k+1},\gamma_{k+1},\delta_{k+1}} \dots L^{{T}_1}_{\rho_1,\beta_1,\gamma_1,\delta_1}\right)(Q) (X^{N,T_{k+1}},Z^N) \Big)\right] \\
	&\quad\quad\quad\quad\quad\quad\quad\quad\quad\quad\quad\quad\quad\quad\quad\quad\quad\quad\quad\quad\quad\quad\quad\quad\quad\quad\quad\quad d\rho\, d\beta\, d\gamma\, d\delta\  dt_1\dots dt_{2(k+1)}\  d\mu(y) \Bigg| \\
	&\leq \int_{\R} (|y|+ y^{4(k+1)}) d|\mu|(y) \times \Big(C\times K_N^{\deg P} C_{\max}(P) Nb(P)(\deg P)^2\Big)^{4(k+1)}\times k^{3k}  .
	\end{align*}
	
	\noindent Hence we get Equation \eqref{3mainresu}. We get Equation \eqref{3mainresu2} very similarly. \\
	
	\textbf{Part 2:} Let us now prove that if $f$ and $g$ both satisfy \eqref{3hypoth} for some complex measures $\mu_f$ and $\mu_g$, then if they are bounded functions equal on a neighborhood of the spectrum of $P(x,Z^N)$, where $x$ is a free semicircular system free from $\M_N(\C)$, then for any $i$, $\alpha_i^P(f,Z^N) = \alpha_i^P(g,Z^N)$. Since the coefficients $\alpha_i^P(f,Z^N)$ are linear with respect to the measure $\mu_f$, we only need to show that given a function which takes the value $0$ on a neighborhood of the spectrum of $P(x,Z^N)$, then for any $i$, $\alpha_i^P(f,Z^N) = 0$. Let $X^{lN}$ be independent GUE matrices of size $lN$, then we get that for any $k$ such that $f$ is smooth enough, thanks to Equation \eqref{3mainresu}, 
	$$ \E\left[ \ts_{lN}\Big(f(P(X^{lN},Z^N\otimes I_l))\Big)\right] = \sum_{0\leq i\leq k} \frac{1}{(lN)^{2i}} \alpha_i^P(f,Z^N\otimes I_l) + \mathcal{O}(l^{-2(k+1)}) .$$
	
	\noindent But in the sense of Definition \ref{3freeprob}, for any $i$, $(x^{T_i},Z^N\otimes I_l)$ and $(x^{T_i},Z^N)$ have the same joint $*$-distribution, hence
	$$ \E\left[ \ts_{lN}\Big(f(P(X^{lN},Z^N\otimes I_l))\Big)\right] = \sum_{0\leq i\leq k} \frac{1}{(lN)^{2i}} \alpha_i^P(f,Z^N) + \mathcal{O}(l^{-2(k+1)}) .$$
	
	\noindent Consequently, if there exists $i$ such that $\alpha_i^P(f,Z^N)\neq 0$, then we can find constants $c$ and $k$ (dependent on $N$) such that 
	\begin{equation}
	\label{3contradhypo}
	\E\left[ \ts_{lN}\Big(f(P(X^{lN},Z^N\otimes I_l))\Big)\right] \sim_{l\to\infty} c\times l^{-2k} . 
	\end{equation}
	
	\noindent We are going to show that the left hand side decays exponentially fast in $l$, hence proving a contradiction. Now if we set $E$ the support of $f$, then
	$$ \left| \E\left[ \ts_{lN}\Big(f(P(X^{lN},Z^N\otimes I_l))\Big)\right] \right| \leq \norm{f}_{\CC^0} \P\left( \sigma\left( P(X^{lN},Z^N\otimes I_l) \right) \cap E \neq \emptyset \right) .$$
	However, thanks to Proposition \ref{3bornenorme}, there exist constants $A$ and $B$ such that for any $l$,
	$$ \P\left( \norm{P(X^{lN},Z^N\otimes I_l)} \geq A \right) \leq e^{-B l} . $$
	\noindent Thus,
	$$ \left| \E\left[ \ts_{lN}\Big(f(P(X^{lN},Z^N\otimes I_l))\Big)\right] \right| \leq \norm{f}_{\CC^0} \Big(\P\left( \sigma\left( P(X^{lN},Z^N\otimes I_l) \right) \cap E \cap [-A,A] \neq \emptyset \right) + e^{-Bl}\Big) .$$
	Let $g$ be a $\mathcal{C}^{\infty}$-function which takes non-negative values, with compact support disjoint from the spectrum of $P(x,Z^N)$ and such that $g_{|E\cap[-A,A]}=1$. Then,
	$$ \left| \E\left[ \ts_{lN}\Big(f(P(X^{lN},Z^N\otimes I_l))\Big)\right] \right| \leq \norm{f}_{\CC^0} \P\left( \norm{g\left( P(X^{lN},Z^N\otimes I_l) \right)} \geq 1 \right) + e^{-Bl} .$$
	
	\noindent Since $g$ is $\mathcal{C}^{\infty}$ and has compact support, thanks to the Fourier inversion formula, we have with $ \hat{g}(y) = \frac{1}{2\pi} \int_{\R} g(x) e^{-\i xy} dx$, that 
	$$ g(x) = \int_{\R} e^{\i x y}\ \hat{g}(y)\  dy, $$
	and besides $\int |y\hat{g}(y)| dy<\infty$. Thus, for any self-adjoint matrices $U$ and $V$,
	\begin{align*}
	\norm{g(U)-g(V)} &= \norm{ \int y \int_0^1 e^{\i y U \alpha} (U-V) e^{\i y V (1-\alpha)} \hat{g}(y) d\alpha dy } \\
	&\leq \norm{U-V} \int  |y\hat{g}(y)| dy .
	\end{align*}
	
	\noindent Hence there is a constant $C_B$ such that for any self-adjoint matrices $X_i,Y_i\in\M_{lN}(\C)$ whose operator norm is bounded by $B$,
	\begin{align*}
	\norm{g(P(X,Z^N)) - g(P(Y,Z^N))} \leq C_B \sum_i \norm{X_i - Y_i} .
	\end{align*}
	
	\noindent Consequently, with a proof very similar to the one of \cite[Proposition 4.6]{un}, we get that there exist constant $D$ and $S$ such that for any $\delta>0$,
	$$ \P\left( \left| \norm{g\left( P(X^{lN},Z^N\otimes I_l)\right)} - \E\left[\norm{g\left( P(X^{lN},Z^N\otimes I_l)\right)}\right] \right| \geq \delta + D e^{-Nl} \right) \leq d e^{-2Nl} + e^{-S \delta^2 lN} .$$
	
	\noindent By using Equation \eqref{3mainresu} with $k=0$, we get that
	\begin{align*}
		&\E\left[\norm{g\left( P(X^{lN},Z^N\otimes I_l)\right)}\right] \\
		&\leq \E\left[\tr_{lN}\left(g\left( P\left(X^{lN},Z^N\otimes I_l\right)\right)\right)\right] \\
		&= lN\tau_{lN}\left(g\left( P\left(x,Z^N\otimes I_l\right)\right)\right) + \mathcal{O}(l^{-1})\\
		&= lN\tau_N\left(g\left( P\left(x,Z^N\right)\right)\right) + \mathcal{O}(l^{-1})\\
		&= \mathcal{O}(l^{-1}).
	\end{align*}
	Hence for $l$ large enough, there exists a constant $S$ such that
	$$ \P\left( \norm{g\left( P(X^{lN},Z^N\otimes I_l) \right)} \geq 1 \right) \leq e^{-S l} .$$
	
	\noindent Consequently, there exist constants $A$ and $B$ such that 
	$$ \left| \E\left[ \ts_{lN}\Big(f(P(X^{lN},Z^N\otimes I_l))\Big)\right] \right| \leq A e^{-B l} ,$$
	which is in contradiction with Equation \eqref{3contradhypo}. Hence the conclusion.
	
\end{proof}

We can now prove Theorem \ref{3lessopti}, the only difficulty of the proof is to use the hypothesis of smoothness to replace our function $f$ by a function which satisfies \eqref{3hypoth} without losing too much on the constants.

\begin{proof}[Proof of Theorem \ref{3lessopti}]
	
	To begin with, let
	\begin{equation}
	\label{3fctplateau}
		h:x\to \left\{
		\begin{array}{ll}
			e^{-x^{-4} - (1-x)^{-4}} & \mbox{if } x\in (0,1), \\
			0 & \mbox{else.}
		\end{array}
		\right.
	\end{equation}
	
	\noindent Let $H$ be the primitive of $h$ which takes the value $0$ on $\R^-$ and renormalized such that it takes the value $1$ for $x\geq 1$. Then given a constant $m$ one can define the function $g : x\to H(m+1-x)H(m+1+x)$ which takes the value $1$ on $[-m,m]$ and $0$ outside of $(-m-1,m+1)$. Let $B$ be the union over $i$ of the events $\{\norm{X_i^N} \geq D + \alpha^{-1}\}$ where $D$ and $\alpha$ where defined in Proposition \ref{3bornenorme}. Thus, $\P(B)\leq p e^{-N}$. By adjusting the constant $K$ defined in Theorem \ref{3TTheo} we can always assume that it is larger than $D + \alpha^{-1}$, thus if for any $i$, $\norm{X_i^N} \leq D + \alpha^{-1}$, $\norm{P(X^N,Z^N)} \leq \mathbf{m} C_{\max} K_N^n$. We fix $m = \sup_N \mathbf{m} C_{\max} K_N^n$, thus if $P(X^N,Z^N)$ has an eigenvalue outside of $[-m,m]$, necessarily $X^N\in B$. Thus
	\begin{equation}
	\label{3alinf}
	\E\left[ \ts_{N}\Big(f(1-g)(P(X^N,Z^N))\Big)\right] \leq \norm{f}_{\CC^0} \P(B) \leq \norm{f}_{\CC^0} p \times e^{-N} .
	\end{equation}
	
	\noindent Since $fg$ has compact support and is a function of class $\mathcal{C}^{4(k+1)+2}$, we can take its Fourier transform and then invert it so that with the convention $ \hat{h}(y) = \frac{1}{2\pi} \int_{\R} h(x) e^{-\i xy} dx$, we have
	
	$$ \forall x\in\R,\quad (fg)(x) = \int_{\R} e^{\i xy} \widehat{fg}(y)\ dy. $$
	
	\noindent Besides, since if $h$ has compact support bounded by $m+1$ then $\norm{\hat{h}}_{\CC^0} \leq \frac{1}{\pi}(m+1) \norm{h}_{\CC^0} $, we have
	
	\begin{align*}
	\int_{\R} (|y|+ y^{4(k+1)}) \left| \widehat{fg}(y) \right|\ dy &\leq \int_{\R} \frac{\sum_{i=0}^{4(k+1)+2} |y|^i }{1+y^2}\ \left| \widehat{fg}(y) \right|\ dy \\
	&\leq \bigintss_{\R} \frac{\sum_{i=0}^{4(k+1)+2} \left| \widehat{(fg)^{(i)}}(y) \right| }{1+y^2}\ dy \\
	&\leq \frac{1}{\pi} \left( m + 1\right) \norm{fg}_{\mathcal{C}^{4(k+1)+2}} \int_{\R} \frac{1}{1+y^2}\ dy \\
	&\leq \left( m + 1\right) \norm{fg}_{\mathcal{C}^{4(k+1)+2}} \,
	\end{align*}
	
	\noindent Hence $fg$ satisfies the hypothesis of Theorem \ref{3TTheo} with $\mu(dy) = \widehat{fg}(y) dy$. 
	Therefore,  combining with Equation \eqref{3alinf}, by adjusting the constant $C$, we get that
	\begin{align*}
	&\left| \E\left[ \ts_{N}\Big(f(P(X^N,Z^N))\Big)\right] - \sum_{0\leq i\leq k} \frac{1}{N^{2i}} \alpha_i^P(fg,Z^N) \right| \\
	&\leq \frac{1}{N^{2k+2}} \norm{fg}_{\mathcal{C}^{4(k+1)+2}} \times \Big(C\times K_N^{\deg P} C_{\max}(P) Nb(P)(\deg P)^2\Big)^{4(k+1)+1}\times k^{3k} . \nonumber
	\end{align*}

	\noindent Then one sets $\alpha_i^P(f,Z^N) = \alpha_i^P(fg,Z^N)$. Besides, if $f$ and $g$ are functions of class $\mathcal{C}^{4(k+1)+2}$ equal on a neighborhood of the spectrum of $P(x,Z^N)$, where $x$ is a free semicircular system free from $\M_N(\C)$, then with the same proof as in Theorem \ref{3TTheo}, one has that for any $i\leq k$, $\alpha_i^P(f,Z^N) = \alpha_i^P(g,Z^N)$.
	
	Finally, one can write the $j$-th derivative of $x\to e^{-x^{-4}} $ on $\R^+$ as $x\to Q_j(x^{-1})e^{-x^{-4}} $ for some polynomial $Q_j$. By studying $Nb(Q_j), C_{\max}(Q_j) $ and $\deg(Q_j)$, as in the proof of Theorem \ref{3TTheo}, we get that the infinity norm of the $j$-th derivative of this function is smaller than $20^j j! (5j/4)^{5j/4} $. Hence by adjusting $C$ and using Stirling's formula,
	\begin{align*}
	&\left| \E\left[ \ts_{N}\Big(f(P(X^N,Z^N))\Big)\right] - \sum_{0\leq i\leq k} \frac{1}{N^{2i}} \alpha_i^P(fg,Z^N) \right| \\
	&\leq \frac{1}{N^{2k+2}} \norm{f}_{\mathcal{C}^{4(k+1)+2}} \times \Big(C\times K_N^{\deg P} C_{\max}(P) Nb(P)(\deg P)^2\Big)^{4(k+1)+1}\times k^{12k} . \nonumber
	\end{align*}
	
	\noindent The other points of the theorem are a direct consequence of Theorem \ref{3TTheo}.
	
\end{proof}

\subsection{Continuity properties of the coefficients of the asymptotic expansion}

The aim of this subsection is to give some details on the continuity of the coefficients $\alpha_i^P(f,Z^N)$ with respect to their parameters. Indeed, one have the following corollary of Theorem \ref{3TTheo} and more specifically Formula \eqref{exprdescoeff}.

\begin{cor}
	\label{continucoeff}
	With notations and assumptions as in Theorem \ref{3TTheo}, given the following objects,
	\begin{itemize}
		\item $f,g:\R\to\R\in\CC^{4i+2}$,
		\item $P,Q\in \PP_{d,q}$ polynomials of degree at most $n$ and largest coefficient $c_{\max}$,
		\item $Z^N$ and $\widetilde{Z}^N$ tuples of matrices such that for every $i$, $\norm{Z_i^N} \leq K$ and $\norm{\widetilde{Z}_i^N} \leq K$,
	\end{itemize}
	Then there exist constants $C_i(n,c_{\max},K), C_i^1(n,c_{\max},K,\norm{f}_{4i+2}), C_i^2(n,c_{\max},K,\norm{f}_{4i+2})\geq 0$ such that with $c_M(\cdot)$ defined as in Equation \eqref{3normA},
	\begin{equation}
		\label{result1}
		\left| \alpha_i^P(f,Z^N) - \alpha_i^P(g,Z^N) \right| \leq C_i(n,c_{\max},K)\ \norm{f-g}_{\CC^i},
	\end{equation}
	\begin{equation}
		\label{result2}
		\left| \alpha_i^P(f,Z^N) - \alpha_i^Q(f,Z^N) \right| \leq C_i^1(n,c_{\max},K,\norm{f}_{4i+2})\ \sup_{M \text{ monomial}} |c_M(P)-c_M(Q)|,
	\end{equation}
	\begin{equation}
		\label{result3}
		\left| \alpha_i^P(f,Z^N) - \alpha_i^P(f,\widetilde{Z}^N) \right| \leq C_i^2(n,c_{\max},K,\norm{f}_{4i+2})\ \max_i \norm{Z_i^N-\widetilde{Z}_i^N}.
	\end{equation}
	Besides, if $Z^N$ converges in distribution (as defined in Definition \ref{3freeprob}) towards a family $z$, then $\alpha_i^P(f,Z^N)$ converges towards $\alpha_i^P(f,z)$.
\end{cor}

Note that one could estimate the constants $C_i^P(Z^N), C_i^{m,c_{\max}}(Z^N,f)$ and $C_i^P(K,f)$ with respect to their parameters, similarly to how we obtain Equations \eqref{3mainresu} and \eqref{3mainresu2}. However, we do not do it here in order to keep the computations short.

\begin{proof}
	The uniqueness of the coefficients $\alpha_i^P(f,Z^N)$ coupled with the linearity of the map $$f\mapsto \E\left[ \ts_{N}\Big(f(P(X^N,Z^N))\Big)\right],$$ implies the linearity of the map $f\mapsto \alpha_i^P(f,Z^N)$. Hence Equation \eqref{3mainresu02} implies Equation \eqref{result1}.
	
	Besides, with $P$ and $Q$ defined as previously, we have thanks to Equation \eqref{3duha} that
	\begin{align*}
		&\E\left[ \ts_N\left( e^{\i y P(X^{N},Z^N)} \right) \right] - \E\left[ \ts_N\left( e^{\i y Q(X^{N},Z^N)} \right) \right] \\
		&=\i y \int_{0}^1 \E\left[ \ts_N\left( e^{\i y u P(X^{N},Z^N)} \left(P(X^{N},Z^N) - Q(X^{N},Z^N)\right) e^{\i y (1-u) Q(X^{N},Z^N)} \right) \right]\ du.
	\end{align*}
	Hence thanks to Proposition \ref{3intercoef}, we get that
	\begin{align*}
		 &\int_{[0,1]^{4i}} \tau_N\Big( \left(L^{{T}_i}_{\rho_i,\beta_i,\gamma_i,\delta_i} \dots L^{{T}_1}_{\rho_1,\beta_1,\gamma_1,\delta_1}\right)(e^{\i y P}-e^{\i y Q}) (x^{{T}_i},Z^N) \Big)\  d\rho\, d\beta\, d\gamma\, d\delta \\
		 &= \i y \int_{0}^1  \int_{[0,1]^{4i}} \tau_N\Big( \left(L^{{T}_i}_{\rho_i,\beta_i,\gamma_i,\delta_i} \dots L^{{T}_1}_{\rho_1,\beta_1,\gamma_1,\delta_1}\right)(e^{\i y u P} \left(P - Q\right) e^{\i y (1-u) Q}) (x^{{T}_i},Z^N) \Big)\  d\rho\, d\beta\, d\gamma\, d\delta\ du,
	\end{align*}
	Consequently after integrating over $y$, we get that
	\begin{align*}
		&\alpha_i^P(f,Z^N) - \alpha_i^Q(f,Z^N) \\
		&= \i\int_{0}^1  \int_{\R} y \int_{[0,1]^{4i}} \tau_N\Big( \left(L^{{T}_i}_{\rho_i,\beta_i,\gamma_i,\delta_i} \dots L^{{T}_1}_{\rho_1,\beta_1,\gamma_1,\delta_1}\right)(e^{\i y u P} \left(P - Q\right) e^{\i y (1-u) Q}) (x^{{T}_i},Z^N) \Big)\\
		&\quad\quad\quad\quad\quad\quad\quad\quad\quad\quad\quad\quad\quad\quad\quad\quad\quad\quad\quad\quad\quad\quad\quad\quad\quad\quad\quad\quad\quad\quad\quad\quad\quad\quad  d\rho\, d\beta\, d\gamma\, d\delta\ d\mu(y)\ du
	\end{align*}
	Since one can write 
	$$ P-Q = \sum_{M\text{ monomial}} (c_M(P)-c_M(Q)) M,$$
	we get Equation \eqref{result2}. Similarly we have that
	\begin{align*}
		 P(X,Z) - P(X,\widetilde{Z}) &= \sum_{M\text{ monomial}} c_M(P) \left(M(X,Z) - M(X,\widetilde{Z})\right) \\
		 &= \sum_{M\text{ monomial}} c_M(P) \sum_{M=AZ_iB} A(X,Z)(Z_i-\widetilde{Z}_i)B(X,\widetilde{Z}),
	\end{align*}
	hence we get Equation \eqref{result3}. Finally if $Z^N$ converges in distribution towards a family $z$, then the family $(x^{{T}_i},Z^N)$ converges in distribution towards $(x^{{T}_i},z)$ where $z$ is free from $x^{{T}_i}$. Indeed, thanks to Equation \ref{kddkdxkfl}, the trace of a polynomial $L$ evaluated in $(x^{{T}_i},Z^N)$ can be expressed into a linear combination of product of traces of polynomials in either $x^{{T}_i}$ or $Z^N$. Then the convergence in distribution of the family $Z^N$ implies that this formula converges towards the same linear combination but whose polynomials are evaluated into $x^{{T}_i}$ or $z$ instead of $x^{{T}_i}$ or $Z^N$, that is the trace of the polynomial $L$ evaluated into $(x^{{T}_i},z)$ where the family $x^{{T}_i}$ and $z$ are free. Thus, thanks to the dominated convergence theorem, Formula \eqref{exprdescoeff} implies the convergence of $\alpha_i^P(f,Z^N)$ towards $\alpha_i^P(f,z)$.

\end{proof}

\section{Consequences of Theorem \ref{3TTheo}}

\subsection{Proof of corollary \ref{3voisinage}}

Let $g$ be a non-negative $\mathcal{C}^{\infty}$-function which takes the value $0$ on $(-\infty,1/2]$, $1$ on $[1,\infty)$ and values in $[0,1]$ elsewhere. For any $a,b\in \R\cup \{\infty, -\infty\}$, we define $h_{(a,b)}^{\varepsilon} : x\mapsto g(\varepsilon^{-1}(x-a)) g(-\varepsilon^{-1}(x-b) )$ with convention $g(\infty)=1$. Then let $\mathcal{I}_N$ be the collection of connected components of the complementary set of $\sigma(P(x,A^N))$. Then we define
$$ h^{\varepsilon}_N = \sum_{I\in \mathcal{I}_N} h_I^{\varepsilon} .$$
This function is well-defined since the spectrum of $P(x,A^N)$ is compact, hence its complementary set has a finite number of connected components of measure larger than $\varepsilon$, and since $h_{(a,b)}^{\varepsilon} = 0$ when $b-a\leq \varepsilon$, the sum over $I\in\mathcal{I}_N$ is actually a finite sum. Besides, we have that
\begin{align*}
\P\left( \sigma(P(X^N,A^N)) \not\subset \sigma(P(x,A^N)) + \varepsilon  \right)\ &\leq\ \P\left( \norm{h^{\varepsilon}(P(X^N,A^N))} \geq 1 \right) \\
&\leq\ \E\left[ \tr_N\left( h^{\varepsilon}(P(X^N,A^N)) \right) \right] .
\end{align*}

\noindent Besides, $\norm{h_I^{\varepsilon}}_{\mathcal{C}^{4(k+1) +2}}$ is bounded by $C_k\varepsilon ^{-4k-6}$ for $\varepsilon$ small enough where $C_k$ is a constant independent of $N$, and since the supports of functions $h_I^{\varepsilon}$ are disjoint for $I\in\mathcal{I}_N$, we have that $\norm{h_N^{\varepsilon}}_{\mathcal{C}^{4(k+1) +2}}$ is also bounded by $C_k\varepsilon ^{-4k-6}$. Then thanks to Theorem \ref{3lessopti} since the spectrum of $P(x,A^N)$ and the support of $h^{\varepsilon}_N$ are disjoint, in combination with the assumption that the operator norm of the matrices $A^N$ is uniformly bounded over $N$, for any $k\in\N$, we get that there is a constant $C_k$ such that for any $\varepsilon$ and for $N$ large enough, 
$$ \E\left[ \tr_N\left( h^{\varepsilon}(P(X^N,A^N)) \right) \right] \leq C_k \frac{\varepsilon ^{-4k-6}}{N^{2k+1}} .$$
Thus, if we set $\varepsilon = N^{-\alpha}$ with $\alpha<1/2$, then by fixing $k$ large enough we get that
$$ \P\left( \sigma(P(X^N,A^N)) \not\subset \sigma(P(x,A^N)) + (-N^{-\alpha},N^{-\alpha})  \right) = \mathcal{O}(N^{-2}) .$$
Hence the conclusion by Borel-Cantelli lemma.

\subsection{Proof of Corollary \ref{3boundednormrenm}}

Firstly, we need the following lemma.

\begin{lemma}
	\label{3meilleurestime}
	There exists a $\mathcal{C}^{\infty}$ function $g$ which takes the value $0$ on $(-\infty,1/2]$, $1$ on $[1,\infty)$, and in $[0,1]$ otherwise, such that if we set $f_{\varepsilon}:t\mapsto g(\varepsilon^{-1} (t - \alpha))$ with $\alpha = \norm{PP^*(x,A^N)}$, then there exist constants $C$ and $c$ such that for any $k\leq c N$, $\varepsilon>0$ and $N$,
	\begin{equation*}
	\E\left[\tr_{N}\Big(f_{\varepsilon}(PP^*(X^N,A^N))\Big)\right] \leq N\times C^k \frac{\max(\varepsilon^{-4k},\varepsilon^{-1})}{N^{2k}} k^{12k}.
	\end{equation*}
\end{lemma}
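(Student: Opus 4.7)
The plan is to apply Theorem~\ref{3TTheo} to the self-adjoint polynomial $Q=PP^*$ together with the bump $f_\varepsilon$, the decisive point being that the support of $f_\varepsilon$ is disjoint from $\sigma(PP^*(x,A^N))$. Indeed $\sigma(PP^*(x,A^N))\subset[0,\alpha]$ by the very definition of $\alpha=\norm{PP^*(x,A^N)}$, while $f_\varepsilon$ vanishes on $(-\infty,\alpha+\varepsilon/2]$ as soon as $g$ is chosen to vanish on $(-\infty,1/2]$. By the final assertion of Theorem~\ref{3TTheo}, every coefficient $\alpha_i^{PP^*}(f_\varepsilon,A^N)$ then vanishes, so $\E[\tau_N(f_\varepsilon(PP^*(X^N,A^N)))]$ is controlled entirely by the remainder term of \eqref{3mainresu}.

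To exploit this concretely, I would take $g$ to be the plateau function built from $h$ in \eqref{3fctplateau}, exactly as at the end of the proof of Theorem~\ref{3lessopti}, so that the explicit derivative bound $\norm{g^{(j)}}_\infty\leq 20^j\, j!\,(5j/4)^{5j/4}$ is at our disposal. Since $g$ is smooth and compactly supported, $f_\varepsilon$ admits the inverse Fourier representation $f_\varepsilon(t)=\int e^{\i ty}\,\mu_\varepsilon(dy)$ with $\mu_\varepsilon(dy)=\widehat{f_\varepsilon}(y)\,dy$, and a change of variables yields $\widehat{f_\varepsilon}(y)=\varepsilon\, e^{-\i\alpha y}\,\widehat{g}(\varepsilon y)$, whence
\begin{equation*}
\int |y|^m\,|\widehat{f_\varepsilon}(y)|\,dy = \varepsilon^{-m}\int|z|^m\,|\widehat{g}(z)|\,dz,\qquad m\geq 0.
\end{equation*}

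Applying Theorem~\ref{3TTheo} with $k-1$ in place of $k$ (so that the remainder sits at order $N^{-2k}$) and using that all $\alpha_i^{PP^*}$ vanish then yields
\begin{equation*}
\bigl|\E[\tau_N(f_\varepsilon(PP^*(X^N,A^N)))]\bigr| \leq \frac{1}{N^{2k}}\bigl(\varepsilon^{-1}C_0+\varepsilon^{-4k}I_k\bigr)\,D^{4k}\,(k-1)^{3(k-1)},
\end{equation*}
where $C_0=\int|z||\widehat{g}(z)|\,dz$, $I_k=\int|z|^{4k}|\widehat{g}(z)|\,dz$, and $D^{4k}$ absorbs the matrix-dependent prefactor $(C\,K_N^{\deg(PP^*)}\,C_{\max}(PP^*)\,Nb(PP^*)\,(\deg PP^*)^2)^{4k}$ coming from Theorem~\ref{3TTheo}. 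This prefactor is uniformly bounded in $N$ thanks to the uniform norm bound on $A^N$ together with the elementary estimates $\deg(PP^*)=2\deg P$, $Nb(PP^*)\leq Nb(P)^2$ and $C_{\max}(PP^*)\leq Nb(P)\,C_{\max}(P)^2$. Multiplying by $N$ to pass from $\tau_N$ to $\tr_N$ then produces the announced prefactor.

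The main obstacle is to control $I_k$ with the correct $k$-dependence. Using $(\i z)^{4k}\widehat{g}(z)=\widehat{g^{(4k)}}(z)$ together with the standard trick of inserting $1/(1+z^2)$ in the integrand,
\begin{equation*}
I_k\leq \pi\bigl(\norm{g^{(4k)}}_\infty+\norm{g^{(4k+2)}}_\infty\bigr),
\end{equation*}
and the derivative bound recalled above gives $I_k\leq C^k k^{9k}$ for some absolute $C$. Combined with the $(k-1)^{3(k-1)}$ factor from Theorem~\ref{3TTheo} this produces $I_k\cdot(k-1)^{3(k-1)}\leq C^k k^{12k}$, which, together with the trivial bound on $C_0$, delivers precisely $N\cdot C^k\max(\varepsilon^{-4k},\varepsilon^{-1})N^{-2k}k^{12k}$. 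Everything beyond this Fourier-moment bookkeeping reduces to packaging Theorem~\ref{3TTheo} and the vanishing of the $\alpha_i^{PP^*}(f_\varepsilon,A^N)$.
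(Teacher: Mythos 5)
Your overall strategy (vanishing of the coefficients $\alpha_i^{PP^*}$ because the support of the test function avoids $\sigma(PP^*(x,A^N))$, plus Fourier-moment bookkeeping of the remainder with the explicit derivative bounds on the plateau function) is the right one, but there is a genuine gap at the very step that makes it run: you assert that ``$g$ is smooth and compactly supported'' and then write $f_\varepsilon(t)=\int e^{\i ty}\widehat{f_\varepsilon}(y)\,dy$ with $\widehat{f_\varepsilon}(y)=\varepsilon e^{-\i \alpha y}\widehat g(\varepsilon y)$. This is false: by the statement of the lemma, $g$ must equal $1$ on $[1,\infty)$, so $f_\varepsilon$ equals $1$ on $[\alpha+\varepsilon,\infty)$; neither $g$ nor $f_\varepsilon$ is integrable, $\widehat g$ does not exist as an $L^1$ function, and $f_\varepsilon$ cannot satisfy hypothesis \eqref{3hypoth} at all (any function of the form $\int e^{\i ty}\,d\mu(y)$ with $\int(1+y^{4(k+1)})\,d|\mu|(y)<\infty$ has, by Riemann--Lebesgue applied to the absolutely continuous part obtained after differentiating twice, no way to reproduce a monotone step from $0$ at $-\infty$ to $1$ at $+\infty$; distributionally the transform of such a step involves a principal-value $1/y$ singularity, not a finite measure). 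Hence Theorem \ref{3TTheo} cannot be applied directly to $f_\varepsilon$, and all the subsequent moment identities $\int|y|^m|\widehat{f_\varepsilon}(y)|\,dy=\varepsilon^{-m}\int|z|^m|\widehat g(z)|\,dz$ are unfounded as written.

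The repair is the route the paper takes: introduce a second cutoff and work with $f^{\kappa}_{\varepsilon}:t\mapsto g(\varepsilon^{-1}(t-\alpha))\,g(\varepsilon^{-1}(\kappa-t)+1)$ for $\kappa>\alpha$, which is smooth, compactly supported, still has support disjoint from $\sigma(PP^*(x,A^N))$, and to which Theorem \ref{3TTheo} applies. The key technical point, which your bound on $I_k$ would otherwise miss, is that the estimate must be uniform in $\kappa$: writing $h(t)=f^{\kappa}_{\varepsilon}(\varepsilon t)$, the derivatives of $h$ are supported in two intervals of length $1$ (near the two edges of the plateau), so $\int(|h^{(4k)}|+|h^{(4k+2)}|)\leq 2\big(\norm{h^{(4k)}}_{\mathcal{C}^0}+\norm{h^{(4k+2)}}_{\mathcal{C}^0}\big)$ independently of $\kappa$, giving $\int y^{4k}|\widehat{f^{\kappa}_{\varepsilon}}(y)|\,dy\leq \varepsilon^{-4k}\big(\norm{h^{(4k)}}_{\mathcal{C}^0}+\norm{h^{(4k+2)}}_{\mathcal{C}^0}\big)$ and similarly with $|y|$ and $\varepsilon^{-1}$. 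Your derivative bounds $20^j j!(5j/4)^{5j/4}$ and the combination with the $k^{3k}$ of Theorem \ref{3TTheo} then give exactly the claimed $N\,C^k\max(\varepsilon^{-4k},\varepsilon^{-1})N^{-2k}k^{12k}$ for every $\kappa>\alpha$, and one concludes for $f_\varepsilon$ itself by letting $\kappa\to\infty$ and using the dominated (or monotone) convergence theorem together with the almost sure bound on $\norm{PP^*(X^N,A^N)}$. With this truncation-and-limit step inserted, the rest of your bookkeeping (reduction to the remainder term, estimates of $\deg$, $Nb$ and $C_{\max}$ for $PP^*$, uniformity in $N$ from the bound on $\norm{A^N}$) is sound.
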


\begin{proof}
	
	To estimate the above  expectation we once again want to use the Fourier transform with a few refinements to have an optimal estimate with respect to $\varepsilon$. Let $g$ be a function which takes the value $0$ on $(-\infty,1/2]$, $1$ on $[1,\infty)$, and in $[0,1]$ otherwise. We set $f^{\kappa}_{\varepsilon}:t\mapsto g(\varepsilon^{-1} (t - \alpha)) g(\varepsilon^{-1} (\kappa -t)+1)$ with $\kappa>\alpha $. Since $f^{\kappa}_{\varepsilon}$ has compact support and is sufficiently smooth we can apply Theorem \ref{3TTheo}. Setting $h:t\mapsto g(t - \varepsilon^{-1} \alpha) g( \varepsilon^{-1}\kappa +1  -t)= f^{\kappa}_{\varepsilon}(\varepsilon t)$, we have for $k\in\N^*$,
	
	\begin{align*}
	\int y^{4k} |\hat{f^{\kappa}_{\varepsilon}}(y)|\ dy &= \frac{1}{2\pi} \int y^{4k} \left|\int f^{\kappa}_{\varepsilon}(t) e^{-\i y t}\ dt \right|\ dy \\
	&= \frac{1}{2\pi} \int y^{4k} \left|\int h(t) e^{-\i y \varepsilon t}\ \varepsilon dt \right|\ dy \\
	&= \frac{\varepsilon^{-4k}}{2\pi} \int y^{4k} \left|\int h(t) e^{-\i y t}\ dt \right|\ dy \\
	&= \frac{\varepsilon^{-4k}}{2\pi} \int \frac{1}{1+y^2} \left|\int (h^{(4k)}(t) + h^{(4k+2)}(t)) e^{-\i y t}\ dt \right|\ dy \\
	&\leq  \frac{\varepsilon^{-4k}}{2\pi} \int \frac{1}{1+y^2}\ dy \int ( |h^{(4k)}(t)| + |h^{(4k+2)}(t)|)\ dt \\
	&\leq \varepsilon^{-4k} \left( \norm{h^{(4k)}}_{\CC^0} + \norm{h^{(4k+2)}}_{\CC^0} \right).
	\end{align*}
	
	\noindent In the last line we used the fact the support of the derivatives of $h$ are included in $[\varepsilon^{-1}\alpha, \varepsilon^{-1}\alpha+1] \cup [\varepsilon^{-1}\kappa, \varepsilon^{-1}\kappa +1]$, and thus that their integral is bounded by twice their maximum. Similarly we have that
	\begin{align*}
		\int |y| |\hat{f^{\kappa}_{\varepsilon}}(y)|\ dy &\leq \varepsilon^{-1} \left( \norm{h^{(1)}}_{\CC^0} + \norm{h^{(3)}}_{\CC^0} \right)
	\end{align*}
	
	\noindent We set the function $g:t\to H(2t-1)$ where $H$ is the primitive of the function $h$ defined in \eqref{3fctplateau}, which takes the value $0$ on $\R^-$ and renormalized such that it takes the value $1$ for $x\geq 1$. Then thanks to Theorem \ref{3TTheo} as well as the analysis of the derivatives of $h$ made in the proof of Theorem \ref{3lessopti}, we get that there exist constants $C$ and $c$ such that for any $k\leq cN$, for any $\kappa>\alpha$,
	
	\begin{equation*}
	\E\left[\tr_{N}\Big(f_{\varepsilon}^{\kappa}(PP^*(X^N,A^N))\Big)\right] \leq N\times C^k  \frac{\max(\varepsilon^{-4k},\varepsilon^{-1})}{N^{2k}} k^{12k}.
	\end{equation*}
	
	\noindent Hence the conclusion by dominated convergence theorem.
	
\end{proof}

Consequently, with $x_+ = \max(x,0)$, for any $r>0$,
\begin{align*}
&\E\left[ \left(\norm{PP^*(X^N,A^N)} - \norm{PP^*(x,A^N)}\right)_+ \right] \\
&\leq \int_{0}^{1} \P\left( \norm{PP^*(X^N,A^N)} \geq \norm{PP^*(x,A^N)} + \varepsilon \right)\ d\varepsilon \\
&\quad 	+ \E\left[ \left(\norm{PP^*(X^N,A^N)} - \norm{PP^*(x,A^N)}\right) \1_{\norm{PP^*(X^N,A^N)} \geq \norm{PP^*(x,A^N)}+ 1} \right]\\
&\leq r + \int_{r}^{1} \P\left( \tr_{N}\Big(f_{\varepsilon}(PP^*(X^N,A^N))\Big) \geq 1 \right) d\varepsilon \\
&\quad 	+ \E\left[ \left(\norm{PP^*(X^N,A^N)} - \norm{PP^*(x,A^N)}\right)^2\right]^{1/2} \P\left(\norm{PP^*(X^N,A^N)} \geq \norm{PP^*(x,A^N)}+ 1\right)^{1/2}\\
&\leq r + \int_{r}^{1} \E\left[ \tr_{N}\Big(f_{\varepsilon}(PP^*(X^N,A^N))\Big) \right] d\varepsilon \\
&\quad + \E\left[ \left(\norm{PP^*(X^N,A^N)} - \norm{PP^*(x,A^N)}\right)^2\right]^{1/2} \E\left[\tr_{N}\Big(f_{1}(PP^*(X^N,A^N))\Big)\right]^{1/2}\\
&\leq r + r\times N\times C^k  \left(\frac{r^{-2}}{N}\right)^{2k} k^{12k} + \E\left[ \left(\norm{PP^*(X^N,A^N)} - \norm{PP^*(x,A^N)}\right)^2\right]^{1/2} \times C^{k/2}  N^{-k+1/2} k^{6k} .
\end{align*}

\noindent Besides, thanks to Proposition \ref{3bornenorme} as well as the assumption that the norm of the matrices $A_i^N$ are uniformly bounded, we get that $\E\left[ \left(\norm{PP^*(X^N,A^N)} - \norm{PP^*(x,A^N)}\right)^2\right]$ is bounded independently of $N$ by a constant $K$. Thus, by taking $r = N^{-a}$ for some $a>0$, we get
$$ \E\left[ \left(\norm{PP^*(X^N,A^N)} - \norm{PP^*(x,A^N)}\right)_+ \right] \leq N^{-a} \times \left(1+ N^{1 + 2k (2a-1)} \times C^k k^{12k} \right) + K\times C^{k/2}  N^{-k+1/2} k^{6k}  .$$

\noindent Now we want to pick $a$ and $k$ such that $N^{1 + 2k (2a-1)} \times C^k k^{12k}$ is bounded by $1$ uniformly over $N$ (while keeping in mind that $k$ has to be an integer). It is sufficient to pick $a$ and $k$ such that,
$$ \ln C + \frac{\ln N}{k} + 12 \ln k \leq 2\times(1-2a) \ln N. $$

\noindent We fix $k= \lceil\ln N \rceil$, then we need to pick $a$ such that 
$$ \ln C + 1 + 12 \ln \lceil\ln N\rceil \leq 2\times(1-2a) \ln N. $$

\noindent Which means that we can pick $a = \frac{1}{2} - 4\frac{\ln\ln N}{\ln N}$, and for $N$ large enough,
$$ \E\left[ \left(\norm{PP^*(X^N,A^N)} - \norm{PP^*(x,A^N)}\right)_+ \right] \leq 2 N^{-a} = \frac{2 \ln^4 N}{\sqrt{N}} .$$

Thanks to \cite[Proposition 4.6]{un} we have, with $d$ the number of GUE random matrices, that for $N\geq \ln(d)$, there exist constants $K$ and $D$ such that 
$$ \P\Big( \left|\ \norm{P^*P(X^N,A^N)} - \E\left[\norm{P^*P(X^N,A^N)}\right]\ \right| \geq \delta + K e^{-N} \Big) \leq e^{-N} + e^{-D\delta^2 N} . $$

\noindent Thus, we immediately get that
\begin{align*}
&\P\left( \left(\norm{P(X^N,A^N)} - \norm{P(x,A^N)}\right) \geq \frac{\delta + K e^{-N} + \frac{2 \ln^4 N}{\sqrt{N}}}{\norm{P(x,A^N)}} \right) \\
&\leq \P\left( \left(\norm{P(X^N,A^N)} - \norm{P(x,A^N)}\right)\times \left(\norm{P(X^N,A^N)} + \norm{P(x,A^N)}\right) \geq \delta + K e^{-N} + \frac{2 \ln^4 N}{\sqrt{N}} \right) \\
&\leq \P\left( \norm{P^*P(X^N,A^N)} - \norm{P^*P(x,A^N)} \geq \delta + K e^{-N} + \frac{2 \ln^4 N}{\sqrt{N}} \right) \\
&\leq \P\Big( \norm{P^*P(X^N,A^N)} - \norm{P^*P(x,A^N)} \geq \delta + K e^{-N} \\
&\quad\quad\quad\quad\quad\quad\quad\quad\quad\quad\quad\quad\quad\quad\quad\quad\quad\quad+ \E\left[\left(\norm{P^*P(X^N,A^N)} - \norm{P^*P(x,A^N)}\right)_+\right] \Big) \\
&\leq \P\Big( \norm{P^*P(X^N,A^N)} - \norm{P^*P(x,A^N)} \geq \delta + K e^{-N} + \E\left[\norm{P^*P(X^N,A^N)} - \norm{P^*P(x,A^N)}\right] \Big) \\
&\leq \P\Big( \norm{P^*P(X^N,A^N)} - \E\left[\norm{P^*P(X^N,A^N)}\right] \geq \delta + K e^{-N} \Big) \\
&\leq e^{-N} + e^{-D\delta^2 N} .
\end{align*}

\noindent Hence by replacing $\delta$ by $D^{-1/2} \delta$, we get that there is a constant $C$ such that 
$$ \P\left( \norm{P(X^N,A^N)} - \norm{P(x,A^N)} \geq C\norm{P(x,A^N)}^{-1}\left(\delta + \frac{\ln^4 N}{\sqrt{N}}\right) \right) \leq e^{-N} + e^{-\delta^2 N} . $$

\noindent Finally by replacing $\delta$ by $\frac{\ln^4 N}{\sqrt{N}} \delta$, we get that
$$ \P\left( \frac{\sqrt{N}}{\ln^4 N} \left(\norm{P(X^N,A^N)} - \norm{P(x,A^N)}\right) \geq C \frac{\delta + 1}{\norm{P(x,A^N)}} \right) \leq e^{-N} + e^{-\delta^2 \ln^8N} . $$

\section*{Acknowledgements}

The author would like to thanks his PhD supervisors Beno\^it Collins and Alice Guionnet for proofreading this paper and their continuous help, as well as Mikael de la Salle for helpful discussion. The author was partially supported by a MEXT JASSO fellowship and Labex Milyon (ANR-10-LABX-0070) of Universit\'e de Lyon.

The author would also like to strongly thank all of the reviewers for their numerous comments and suggestions which really improved the clarity and the quality of the paper and its results.

\bibliographystyle{abbrv}

\end{document}